\newtheorem{theorem}{Theorem}[section]
\newtheorem{lemma}[theorem]{Lemma}
\newtheorem{cor}[theorem]{Corollary} 
\newtheorem{prop}[theorem]{Proposition} 
\theoremstyle{definition}
\theoremstyle{remark}
\newtheorem{remark}[theorem]{Remark}
\numberwithin{equation}{section}
\def\ii{{\bf i}}
\def\jj{{\bf j}}
\def\kk{{\bf k}}
\newcommand{\ka}{\mathfrak{c}}
\renewcommand{\i}{\mathfrak{i}}
\newcommand{\id}{\mathrm{Id}}
\newcommand{\ind}[1]{\mathbf{1}_{\{#1\}}}
\newcommand{\E}{\mathbb{E}} 
\newcommand{\p}{\mathbb{P}} 
\newcommand{\R}{\mathbb{R}} 
\newcommand{\N}{\mathbb{N}}
\newcommand{\om}{o_m}
\newcommand{\Om}{\mathcal{O}_m}
\newcommand{\Wass}{\mathcal{W}}
\newcommand{\rng}{[n]^{\underline{d}}} 
\newcommand{\Rng}[1]{[n]^{\underline{#1}}}
\DeclareMathOperator{\Var}{Var}	
\DeclareMathOperator{\Cov}{Cov}	
\DeclareMathOperator{\spa}{span}
\DeclareMathOperator{\Vol}{Vol}
\DeclareMathOperator{\Ker}{Ker}
\title[Limit theorems for the volumes of small codimensional random
  sections of $\ell_p^n$-balls]{Limit theorems for the volumes of
   small codimensional random sections of $\ell_p^n$-balls}
\author{Rados{\l}aw Adamczak}
\address{Institute of Mathematics, University of Warsaw, ul. Banacha 2, 02-097 Warszawa, Poland}
\email{r.adamczak@mimuw.edu.pl}
\author{Peter Pivovarov}
\address{Mathematics Department, University of Missouri, Columbia, Missouri 65211}
\email{pivovarovp@missouri.edu}
\author{Paul Simanjuntak}
\address{Mathematics Department, University of Missouri, Columbia, Missouri 65211}
\email{simanjuntakp@missouri.edu}
\date{}
\keywords{Random sets, Central Limit Theorem, random sections of star-shaped bodies, $\ell_p^n$-balls}
\subjclass{Primary 60D05, 52A22. Secondary 60F05}
\thanks{Radosław Adamczak was supported by the  National Science Center, Poland via the Sonata Bis  grant  no.\ 2015/18/E/ST1/00214. Peter Pivovarov was supported by NSF-Grant DMS-2105468 and Simons Foundation grant \#635531}
\begin{document}
\maketitle

\begin{abstract}
  We establish Central Limit Theorems for the volumes of intersections
  of $B_{p}^n$ (the unit ball of $\ell_p^n$) with uniform random subspaces of codimension $d$
  for fixed $d$ and $n\to \infty$. As a corollary we obtain higher
  order approximations for expected volumes, refining previous results
  by Koldobsky and Lifschitz and approximations obtained from the
  Eldan--Klartag version of CLT for convex bodies. We also obtain a
  Central Limit Theorem for the Minkowski functional of the
  intersection body of $B_p^n$, evaluated on a random vector
  distributed uniformly on the unit sphere.
\end{abstract}

\section{Introduction}

An important aspect of stochastic geometry is the investigation of
volumes of random sets. They have been studied in a variety of
contexts, including for instance volumes of convex hulls of
i.i.d. Gaussian vectors in $\R^m$ (e.g., B\'{a}r\'{a}ny and Vu
\cite{MR2330981}, Calka and Yukich \cite{MR3405618}), points selected
from the boundary of a fixed convex set (e.g., Sch\"{u}tt and Werner
\cite{MR2083401}, Reitzner \cite{MR1885651}, Th\"{a}le
\cite{MR3802312}, Vu \cite{MR2221249}), projections of
high-dimensional cubes onto a random subspace of fixed dimension
(Paouris--Pivovarov--Zinn \cite{MR3230006},
Kabluchko-Prochno-Th\"{a}le \cite{MR4260512}). In the high dimensional
setting, asymptotics and phase transitions for the expected volume of
i.i.d. random points selected from the vertices of the unit cube or
more general product distributions were investigated by Dyer--F\"{u}redi--McDiarmid \cite{MR1139489}, and Gatzouras and Giannopoulos
\cite{MR2460902}. Recently, the case of points drawn from a simplex
and from non-product convex measures has been studied by
Frieze--Pegden--Tkocz \cite{MR4157095} and
Chakraborti--Tkocz--Vritsiou \cite{MR4278336}.  Stochastic versions of
isoperimetric inequalities rely on images of deterministic convex sets
by random mappings (Paouris and Pivovarov
\cite{MR2921184,MR3038532}, Cordero-Erausquin et
al. \cite{MR3368101}). Another important line of research is devoted
to sets obtained from point processes defined on spaces of geometric
objects, e.g., random tesselations (Gusakova and Th\"{a}le
\cite{MR4213157}), random cylinder processes (Beci et
al. \cite{MR4169169}). Let us finally mention work concerning unions
and Minkowski sums of random sets (see the monograph by Molchanov
\cite{MR3751326}).  While far from exhaustive, the lines of research
above give an indication of the rich and diverse perspectives on
volumetric questions in stochastic geometry.

Most of the early results focused on first order asymptotics in the
sense of expectation or convergence (almost sure or in
probability). More recent developments also treat concentration
inequalities, small ball probabilities, large deviations or weak limit
theorems. In particular central limit theorems for the volume or
log-volume in the respective models were established in
\cite{MR2330981,MR3802312,
  MR3230006,MR4172611,MR4213157,gusakova2022volume}. The three former
references treat convex polytopes in a fixed dimension, whereas the
other ones random simplices for the dimension tending to infinity.

In this article we focus on another model of random sets, namely on
sections of high dimensional origin-symmetric bodies by random
subspaces (i.e., subspaces drawn from the Haar measure on the
corresponding Grassmann manifold).  When the bodies in question are
convex, this model has played a central role in geometric functional
analysis, especially via probabilistic methods put forth by Vitali
Milman in his proof of Dvoretzky's theorem \cite{MR0293374}. Over the
years, this has grown into the whole new area of Asymptotic Geometric
Analysis (see the classical book \cite{MR856576} and recent monographs
\cite{MR3331351,MR3185453}).  Convexity is often used by invoking
duality, especially between sections and projections, but some key
results actually extend to star-shaped sets (e.g., Litvak et
al. \cite{MR1469422, MR1645952}). Moreover, star-shaped sets also
furnish deeper dualites in convex geometry, especially in dual
Brunn-Minkowski theory (e.g., Lutwak et al. \cite{MR380631, MR963487,
  MR1890647, MR3573332}).  However, investigation of the asymptotic
distributions of the volumes of random sections of star bodies from a
stochastic geometry perspective is a less explored path. Our focus
here is on asymptotic properties of random sections of $\ell^n_p$-balls, including the star-shaped case when $p\in (0,1)$.

Geometric properties of random sections depend strongly on the
relation between the dimensions of the ambient space and the
section. For convex bodies in special positions, low dimensional
sections are generically spherical and their approximate radia can be
calculated in terms of certain geometric characteristics of the body
(as stated in Milman's version of Dvoretzky's theorem). Accurate
volume approximation in this regime is just one of the important
consequences.  At the other extreme, one has sections of small
codimension. The asymptotic behaviour of their volumes has been
obtained more recently and is directly related to the celebrated
Klartag's CLT for convex bodies \cite{MR2285748,MR2311626} and
subsequent results by Eldan and Klartag \cite{MR2402109}, which we
will now recall briefly (and somewhat informally).  We will also
restrict attention to convex bodies, even though these results hold in
the more general setting of log-concave measures. Below by $|\cdot|$
we denote the standard Euclidean norm on $\R^n$.

Assume thus that $K$ is a convex body in $\R^n$ in isotropic
position, e.g., a random vector $X$ distributed uniformly in $K$ has
mean zero and covariance matrix equal to the identity. Let $E$ be a
random $k$-dimensional subspace of $\R^n$ distributed according to the
Haar measure on the Grassmanian $G_{n,k}$. The Central Limit Theorem
due to Klartag asserts that there exists a universal constant $c>0$
such that with probability at least $1 - e^{-cn^c}$ on the
Grassmanian, the total variation between $\gamma_E$ -- the standard
Gaussian measure on $E$ (i.e., the measure with density $g_E(x) =
(2\pi)^{-k/2} \exp(-|x|^2/2)$ with respect to the $k$ dimensional
Lebesgue measure on $E$) and the law $\mathcal{L}(P_E X)$, where $P_E$
is the orthogonal projection onto $E$, satisfies
\begin{displaymath}
  \|\gamma_E - \mathcal{L}(P_E X)\|_{TV} \le \frac{1}{cn^c}.
\end{displaymath}
Thus, informally, for $k \le n^c$ and $n$ tending to infinity, almost all $k$-dimensional marginals of $X$ are almost Gaussian.

The total variation estimate given by Klartag was subsequently
complemented by Eldan and Klartag \cite{MR2402109} with pointwise
approximation of density. It turns out that with probability $1 -
e^{-cn^c}$, where $c$ is again a positive universal constant, for
$x\in E$ with $|x|<n^c$, the density $f$ of $P_E X$ satisfies
\begin{displaymath}
  \Big|\frac{f(x)}{g_E(x)} - 1\Big| \le \frac{1}{cn^c}.
\end{displaymath}
This result is of particular importance from the point of view of
volumes, since $f(x)$ equals to the ratio ${\Vol_{n-k}(K\cap
  (x+E^\perp))}/{\Vol_{n}(K)}$, where $\Vol_i$ stands for the
$i$-dimensional Lebesgue measure and $E^\perp$ is the orthogonal
complement of $E$.  The above approximation holds for isotropic convex
bodies. By an appropriate scaling one obtains that if $K_n \subseteq
\R^n$ is a sequence of convex bodies such that the random vectors
uniformly distributed on $K_n$ satisfy $\E X_n = 0$, $\Cov(X_n) = c_n
\id$, and $H_n$ is a random subspace of $K_n$ of \emph{codimension} $d
\le n^c$, then with probability tending to one as $n\to \infty$
\begin{align}\label{eq:1st-order-approximation}
\Vol(K_n \cap H_n) = \frac{1}{(2\pi c_n)^{d/2}} \Vol(K_n)(1+o(1)).
\end{align}
The result by Eldan and Klartag provides us thus in particular with the \emph{first order} approximations for the volumes of random section of \emph{fixed codimension} $d$ of high dimensional convex bodies.

The goal of this article is to complement \eqref{eq:1st-order-approximation} with Central Limit Theorems in the special case of $\ell_p^n$-balls.
Recall that for $p \in (0,\infty)$, the ball $B_p^n$ (the unit ball in the space $\ell_p^n$) is defined as
\begin{displaymath}
  B_p^n = \{x = (x_1,\ldots,x_n) \in \R^n \colon \sum_{i=1}^n |x_i|^p \le 1\}.
\end{displaymath}
whereas
\begin{displaymath}
  B_p^\infty = \{x\in \R^n\colon \max_{i\le n} |x_i|\le 1\}.
\end{displaymath}
$B_p^n$ is a symmetric star-shaped set and for $p \ge 1$ it is convex. One can verify that
\begin{align}\label{eq:volume-Bpn}
  \Vol_n(B_p^n) = \frac{(2\Gamma(1+1/p))^n}{\Gamma(1+ n/p)},
\end{align}
which via Stirling's approximation shows in particular that for fixed $p$ and large $n$, $\Vol_n(B_p^n)^{1/n}$ behaves like $c(p)/n^{1/p}$ for some constant $c(p)$.

Finite-dimensional $\ell_p$-balls are of interest from various points
of view, including the functional analytic, geometric, and
probabilistic. The interplay between these viewpoints is apparent in
early research on sections of $\ell_p^n$-balls (e.g., Ball
\cite{MR1008726}, Meyer and Pajor \cite{MR960226}, Koldobsky
\cite{MR1656857}). In fact, questions about low codimensional
sections of convex bodies have been a major driving force in convex
geometry in the last thirty years \cite{MR2132704, MR3185453}. From a
probabilistic perspective, uniform distributions on $B_p^n$ provide
examples of natural non-product distributions that nevertheless
exhibit properties that arise in the classical theory of independent
random variables. In recent years many strong results in this
direction have been established. In particular, Schechtman--Zinn
\cite{MR1796723}, Sodin \cite{MR2446328} and Lata{\l}a--Wojtaszczyk
\cite{MR2449135} studied concentration and isoperimetric properties.
Alonso-Guti\'{e}rrez--Prochno--Th\"{a}le \cite{MR4003577,MR3806754},
Gantert--Kim--Ramanan \cite{MR3737915} and Kabluchko-Prochno-Th\"{a}le
\cite{MR3904638,MR4216415} investigated limit theorems together with
large and moderate deviations for various norms of vectors drawn at
random from $B_p^n$, as well as their random projections. Naor and
Romik \cite{MR1962135,MR2262841} studied proximity of the normalized
cone and surface measures on the boundary of $B_p^n$ and its
consequences for concentration. Eskenazis--Nayar--Tkocz
\cite{MR3846841,MR3828740} established optimal constants in Khintchine
inequalities for vectors distributed uniformly on $B_p^n$. Another
reason for investigating probabilistic aspects of $B_p^n$ is
that probabilistic tools can be used in the study of other geometric
aspects, seemingly purely deterministic. This approach has been
initiated in the work by Schechtman and Zinn \cite{MR1015684} and
further continued in the seminal paper \cite{MR2123199} by Barthe et
al., and more recently, e.g., by Chasapis, Eskenazis, Nayar, Tkocz
(see
\cite{MR3846841,MR3828740,MR4055953,chasapis2021slicing}). Geometric
results obtained this way include among others monotonicity of various
geometric quantities and identification of subspaces of
maximal/minimal volume.  Let us mention that some of the results
mentioned above are special cases of statements conjectured for
general log-concave measures. In fact, also the CLT for convex bodies
was first established by Anttila-Ball-Perissinaki \cite{MR1997580} for
$B_p^n$ rescaled to the isotropic position.

The first order asymptotics for the expected volume of $B_p^n\cap
H_n$, where $H_n$ is a random subspace of fixed codimension $d$,
asymptotically equivalent to \eqref{eq:1st-order-approximation} (which
formally covers the case of $p \ge 1$), were obtained before the
result by Eldan--Klartag by Koldobsky and Lifschitz \cite{MR1796717}
through a Fourier-analytic approach (in fact the analysis in
\cite{MR1796717} covers not only the convex case but the full range of
$p>0$). These authors obtained asymptotics also in the cases of
sections of proportional and fixed \emph{dimension}. The equation
\eqref{eq:1st-order-approximation} specialized to $B_p^n$, after some
calculations concerning the covariance matrix gives (see
\cite[p. 490]{MR2123199} for an explicit formula for moments of
coordinates) that with probability tending to one as $n\to \infty$,
\begin{align}\label{eq:1st-order-B_pn}
\Vol_{n-d}(B_p^n\cap H_n) = \Big(\frac{3\Gamma\Big(1+\frac{1}{p}\Big)\Gamma\Big(1 + \frac{n+2}{p}\Big)}{2\pi \Gamma\Big(1+\frac{3}{p}\Big)\Gamma\Big(1+\frac{n}{p}\Big)}\Big)^{d/2} \Vol_n(B_p^n) (1+ o(1)).
\end{align}
(with $\Vol_n(B_p^n)$ given by \eqref{eq:volume-Bpn}). The results by Koldobsky--Lifschitz \cite{MR1796717} give the same asymptotic behaviour also for the expected volume. We remark that different approaches to the first order asymptotics yield different explicit expressions, which turn out to be equivalent thanks to Stirling's formula. In particular it can be read from the above asymptotics that

\begin{align}\label{eq:1st-order-B_pn-normalized}
  \frac{\Vol_{n-d}(B_p^n\cap H_n)}{\Vol_{n-d}(B_p^{n-d})} \stackrel{n\to \infty}{\to} a_{p,d} := \Big(\frac{\Gamma(1/p)}{\Gamma(3/p)}\Big)^{d/2}\frac{2^{d/2}\Gamma(1+1/p)^d }{\pi^{d/2}}
\end{align}
in probability. This formulation is more convenient for higher order analysis than \eqref{eq:1st-order-B_pn} as it allows to absorb certain normalization factors.

Our main results allow to complement this approximation with Central Limit Theorems. We postpone the precise formulation, which involves additional quite complicated formulas, to Section \ref{sec:main-results} and here we just state them in a simplified form:

\begin{itemize}
\item For $p \in (0,2)$ and any fixed $d > 0$ we show that for some explicit constants $b_{p,d}, \Sigma_{p,d}^2$, the random variable
\begin{displaymath}
  n^{3/2}\Big(\frac{\Vol_{n-d}(H_n\cap B_p^n)}{\Vol_{n-d}(B_p^{n-d})} -   a_{p,d} - \frac{1}{n}b_{p,d}\Big)
\end{displaymath}
converges weakly and in all Wasserstein distances $\Wass_{q}$ for $q > 0$, to a Gaussian random variable with mean zero and variance $\Sigma_{p,d}^2$ (recall that $a_{p,d}$ is defined in \eqref{eq:1st-order-B_pn-normalized}). This is the content of Theorem \ref{thm:CLT}.

\item As a corollary to the CLT in Wasserstein distance we obtain higher order approximations of $\E \Vol_{n-d}(B_p^n\cap H_n)$ for $p \in (0,2)$ of the form
\begin{displaymath}
   \E \Vol_{n-d}(B_p^n\cap H_n) = \frac{\Big(2\Gamma\Big(1+\frac{1}{p}\Big)\Big)^{n-d}}{\Gamma\Big(1+\frac{n-d}{p}\Big)}\Big(a_{p,d} + \frac{b_{p,d}}{n} + o(n^{-3/2})\Big)
\end{displaymath}
(Corollary \ref{cor:mean-approximation}).

\item For $d=1$ we extend the weak convergence to arbitrary $p > 0$ (Theorem \ref{thm:d=1-p-arbitrary}).

\item For $p = \infty$ and $d=1$ we establish similar weak convergence also for non-central sections parallel to $H$ (Theorem \ref{thm:cube}).

\item Reinterpreting the result for $d=1$ as a CLT for the radial
  function of the \emph{intersection body} of $B_p^n$ evaluated on a
  random vector from the sphere we infer a Central Limit Theorem for
  the norm induced by the intersection body evaluated on such a random
  vector (Corollary \ref{cor:intersection}).
\end{itemize}

Our approach relies on probabilistic formulas for volumes of sections
developed by Nayar and Tkocz \cite{MR4055953} (for the case $p \in
(0,2)$) and Chasapis, Nayar and Tkocz \cite{chasapis2021slicing} (for
$p > 2$). The former one allows us to represent the volume in terms of
expected determinants involving some auxiliary random variables, which
after applying certain reverse H\"older inequalities for polynomials
together with some geometric analysis allow to reduce the problem to
the Central Limit Theorem for $U$-statistics. The latter formula
relates the volume of a random section to the value of density at zero
for a randomly weighted sum of independent random variables, which
allows to apply conditionally an appropriate version of the Edgeworth
expansion for non-i.i.d. sequences. We note that it seems that the
lack of a version of Edgeworth expansion in higher dimension suitable
for our randomized setting is the main obstacle in extending our
results for $p > 2$ to general $d$. Since the investigation of
expansions of this form is rather distant from the main tools used in
this article we postpone it to future research. We refer the reader to
Remarks \ref{rem:Edgeworth-in-high-d} and \ref{rem:generic-CLT-etc}
for detailed comments concerning the difficulties in using Edgeworth
type results present in the literature and description of recent
developments on randomized Central Limit Theorems and Edgeworth
expansions.

\medskip

The organization of the article is as follows. After introducing the basic notation (Section \ref{sec:notation}) we state our main results (Section \ref{sec:main-results}). Section \ref{sec:tools} is devoted to introduction of various auxiliary results used in the main arguments. Finally Section \ref{sec:proofs-determinant} is devoted to the proof of results for $p \in (0,2)$ and general $d$, Section \ref{sec:proof-Edgeworth} to general $p<\infty$ and $d=1$, while in Section \ref{sec:cube} we sketch the proof for $p=\infty$ and $d=1$ and in Section \ref{sec:intersection} we prove the result on the intersection body of $B_p^n$.

\section{Notation}\label{sec:notation}

By $C$ we will denote universal constants, whereas $C_a$ will stand for constants depending only on the parameter $a$. In both cases the values of constants may change between occurrences (even within the same line).

For $x = (x_1,\ldots,x_n) \in \R^n$ and $p \in (0,\infty)$ we will denote $|x|_p = (|x_1|^p+\cdots+|x_n|^p)^{1/p}$. We set $|x|_\infty = \max_{i\le n}|x_i|$. As a function on $\R^n$ $|\cdot|_p$ is a norm for $p\ge 1$ and a quasi-norm for $p \in (0,1)$. We will also write $|\cdot|$ for $|\cdot|_2$.

For a random variable $X$, and $p \in \R$, by $\|X\|_p$ we will denote the $p$-th absolute moment of $X$, i.e. for $p\neq 0$, $\|X\|_p = (\E |X|^p)^{1/p}$, $\|X\|_0 = \exp(\E \log |X|)$ (in fact we will use $\|X\|_p$ for $p\neq 0$ only).

For a sequence of random variables $X_n$ and a sequence of positive real numbers $a_n$, we will write $X_n = o_\p(a_n)$ if $X_n/a_n$ converges in probability to zero and $X_n = \mathcal{O}_\p(a_n)$ if $X_n/a_n$ is bounded in probability. Let us also introduce similar notation, which despite being less standard, will allow us to shorten some formulas. We will write $X_n = \om(a_n)$ (resp. $X_n = \Om(a_n)$) if $X_n/a_n$ converges to zero (resp. is bounded) in \emph{all} spaces $L_q$ for $q > 0$ (with the subscript $m$ corresponding to \emph{moments}). We note that the speed of convergence (resp. implicit constants) may and usually will depend on $q$.

When dealing with independent random variables, e.g., $X,Y$ we will denote by $\E_X$, $\E_Y$ expectation with respect to just one of them (conditional expectation with respect to the other one). Variants of this standard convention will be used for larger families of independent random variables, the exact meaning will be either explicitly introduced or clear from the context.

By $\mathcal{L}(X)$ we will denote the law of a random variable $X$.

We will often work with multi-indices $\ii = (i_1,\ldots,i_d) \in [n]^d$. By $\rng$ we will denote the set of multi-indices with pairwise distinct coordinates. Similarly, for a set $I$  by $[n]^I$ (resp. $\Rng{I}$) we will denote the set of all (resp. all one-to-one) functions from $I$ to [n]. For $I\subset [d]$ and a multi-index $\ii \in [n]^d$ by $\ii_I$ we will denote $(i_\ell)_{\ell\in I} \in [n]^I$. Sometimes we will also use the notation $\ii_I$ to denote a stand-alone multi-index, writing for example $\sum_{\ii_I \in \Rng{I}} a_{\ii_I}$. For instance if $I = \{2,3\}$ this notation should be understood as $\sum_{1 \le i_2\neq i_3\le n} a_{i_2i_3}$.

\section{Main results}\label{sec:main-results}

We will now formulate our main results.

Recall that for $q \ge 1$ the Wasserstein distance $\Wass_q(P,Q)$ between two probability measures $P, Q$ on $\R^m$ is defined via the formula
\begin{displaymath}
\Wass_q(P,Q)^q = \inf_{{(X,Y)\colon}\atop {\mathcal{L}(X) = P, \mathcal{L}(Y) = Q}} \E |X-Y|^q.
\end{displaymath}

For $m=1$, the Wasserstein distance admits a representation as
\begin{displaymath}
  \Wass_q(P,Q)^q = \int_0^1 |F_P^{-1}(t) - F_{Q}^{-1}(t)|^q dt,
\end{displaymath}
where $F_P^{-1},F_Q^{-1}$ are generalized inverses of the cummulative distribution functions of $P,Q$ respectively.

It well is known (see, e.g., \cite[Theorem 7.12]{MR1964483}) that a sequence of probability measures $(P_n)$ converges in $\Wass_q$ to some measure $P$ if and only if it converges weakly and the $q$-th absolute moments of $P_n$ converge to the $q$-th absolute moment of $P$.

We are now ready to formulate the first result.

\begin{theorem}\label{thm:CLT}
Let $d$ be a positive integer and let $H_n$ be a random Haar distributed subspace of $\R^n$ of codimension $d$. For $p \in (0,2]$ and a positive integer $n$, define
\begin{align}\label{eq:ab}
  a_{p,d} &= \Big(\frac{\Gamma(1/p)}{\Gamma(3/p)}\Big)^{d/2}\frac{2^{d/2}\Gamma(1+1/p)^d }{\pi^{d/2}},\nonumber \\
  b_{p,d} &= (d+2)d\Big(\frac{\Gamma(5/p)}{\Gamma(1/p)} - 3\Big(\frac{\Gamma(3/p)}{\Gamma(1/p)}\Big)^{2}\Big)\Big(\frac{\Gamma(1/p)}{\Gamma(3/p)}\Big)^{d/2+2}\frac{2^{d/2-3}\Gamma(1+1/p)^d }{\pi^{d/2}}
\end{align}
Then the  sequence of random variables
\begin{displaymath}
n^{3/2}\Big(\frac{\Vol_{n-d}(H_n\cap B_p^n)}{\Vol_{n-d}(B_p^{n-d})} -   a_{p,d} - \frac{1}{n}b_{p,d}\Big)
\end{displaymath}
converges in distribution to a mean zero Gaussian random variable with variance
\begin{align}\label{eq:sigma-def}
  \Sigma_{p,d}^2 = \frac{2^{d-4}
d(d+5) \Gamma(1+1/p)^{2d} }{\pi^{d}}\cdot \Big(\frac{\Gamma(5/p)}{\Gamma(1/p)} - 3\Big(\frac{\Gamma(3/p)}{\Gamma(1/p)}\Big)^{2}\Big)^2\Big(\frac{\Gamma(1/p)}{\Gamma(3/p)}\Big)^{d+4}.
\end{align}
Moreover, for each $q\ge 1$ the convergence holds in the Wasserstein distance $\Wass_q$.
\end{theorem}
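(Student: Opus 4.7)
The plan is to start from the probabilistic representation of the section volume due to Nayar and Tkocz, which for $p \in (0,2]$ rewrites $\Vol_{n-d}(H_n \cap B_p^n) / \Vol_{n-d}(B_p^{n-d})$, up to an explicit deterministic normalization, as the expectation (with respect to auxiliary i.i.d.\ positive random variables $W_1,\ldots,W_n$ whose law depends only on $p$) of the inverse square root of a $d \times d$ determinant of the form $\det\bigl(\sum_{i=1}^{n} W_i v_i v_i^T\bigr)$, where $v_1,\ldots,v_n \in \R^d$ are the rows of a partial isometry whose column span realises the Haar-distributed complement $H_n^\perp$. Realising this partial isometry via the standard $V = G(G^T G)^{-1/2}$ construction, with $G$ an $n\times d$ matrix of i.i.d.\ $N(0,1)$ entries, reduces the statement to a CLT for a single functional of the two independent families $(W_i)_{i\le n}$ and $(G_i)_{i\le n}$.

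The next step is a second-order Taylor expansion of the determinantal functional around the deterministic centres $\E W_1 = \mu_p$ and $\E G_1 G_1^T = I_d$. Using that $n^{-1}\sum_i W_i G_i G_i^T$ concentrates at $\mu_p I_d$ and $n^{-1}\sum_i G_i G_i^T$ at $I_d$, the expansion produces a decomposition
\begin{displaymath}
\frac{\Vol_{n-d}(H_n\cap B_p^n)}{\Vol_{n-d}(B_p^{n-d})} = a_{p,d} + \frac{b_{p,d}}{n} + \frac{T_n}{n} + R_n,
\end{displaymath}
in which $T_n$ is a mean-zero, non-degenerate $U$-statistic of degree two in the centred variables $W_i - \mu_p$ and $G_i G_i^T - I_d$, and $R_n$ is a higher-order remainder. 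Matching the moments $\mu_p, \E W_1^2, \E W_1^3$ to explicit $\Gamma$-ratios reproduces the constants $a_{p,d}$ and $b_{p,d}$ in \eqref{eq:ab}, and a direct second-moment computation of the $U$-statistic kernel identifies $\Sigma_{p,d}^2$ as in \eqref{eq:sigma-def}. The classical CLT for non-degenerate $U$-statistics then yields $n^{-1/2} T_n \to N(0,\Sigma_{p,d}^2)$ in distribution, and the weak convergence claim of the theorem follows provided one can show $R_n = o_\p(n^{-3/2})$.

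To lift weak convergence to convergence in every $\Wass_q$ and simultaneously to strengthen the remainder bound to $R_n = \om(n^{-3/2})$, I would apply reverse H\"older / hypercontractive inequalities for polynomials of bounded degree in i.i.d.\ variables, which give $\|T_n\|_q \le C_{p,q,d}\|T_n\|_2$ uniformly in $n$, and the analogous bound $T_n = \Om(1)$ for the main polynomial piece of the expansion. The non-polynomial remainder requires an additional geometric step: on the event $\bigl\{\sum_i W_i G_i G_i^T \succeq c n \mu_p I_d\bigr\}$ further Taylor expansion of $\det^{-1/2}$ turns $R_n$ into a polynomial of controlled degree, to which the same estimates apply; the complementary event, on which the random Gram matrix is nearly singular in $\R^d$, is shown to have super-polynomially small probability via an anti-concentration estimate for $\sum_i W_i \langle G_i, u\rangle^2$ uniform in unit $u\in\R^d$.

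The main obstacle is this last $L^q$-control of the non-polynomial tail: one must rule out contributions to $\det^{-1/2}$ coming both from small eigenvalues of the random Gram matrix (via anti-concentration) and from heavy tails of the $W_i$ (via tail bounds on the underlying distribution), uniformly in $n$ and with enough decay to remain $\om(n^{-3/2})$ after the $n^{3/2}$ rescaling. Once this tail estimate and the polynomial moment bounds are in place, the algebraic bookkeeping identifying $\mu_p, \E W_1^2, \E W_1^3$ with the $\Gamma$-ratios appearing in \eqref{eq:ab} and \eqref{eq:sigma-def} is a routine (if lengthy) computation, and the $U$-statistic CLT completes the argument.
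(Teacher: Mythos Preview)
Your overall architecture matches the paper's: Nayar--Tkocz formula, Taylor expansion of $\det^{-1/2}$, reduction of the leading term to a $U$-statistic, and hypercontractivity to upgrade weak convergence to $\Wass_q$. There are, however, two points where your plan diverges from the paper in substantive ways.

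First, the structure of the main term. The Nayar--Tkocz formula already integrates out the auxiliary variables: the volume ratio equals a constant times $\E_X[\det(\sum_j X_j v_j v_j^T)^{-1/2}]$ (with $X_j=W_j^{-1}$), which is a function of the Gaussians alone. The paper writes $Y_n = \det(\sum_j X_j v_j v_j^T) = \sum_{\ii \in \rng} a_{\ii} X_{i_1}\cdots X_{i_d}$ via Cauchy--Binet (the $a_{\ii}$'s depend only on $G$ and satisfy $\sum_{\ii} a_{\ii} = 1$ deterministically), and Taylor-expands $1/\sqrt{y}$ to third order around $\mu = (\E X_1)^d$. After taking $\E_X$ the dominant surviving term is $\tfrac{3}{8\mu^{5/2}}\E_X(Y_n-\mu)^2$, which becomes a $U$-statistic of order $2d-1$ in the Gaussian columns $\Gamma_i$, divided by a random normalizer $V_n = \prod_\ell |G_\ell - P_{\ell-1} G_\ell|^4$; the Hoeffding decomposition of this $U$-statistic, combined with the asymptotics of $V_n$, yields the CLT. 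Your description of $T_n$ as a ``degree-two $U$-statistic in $W_i - \mu_p$ and $G_iG_i^T - I_d$'' conflates the two stages: the $W$'s are gone after the inner expectation, and what remains is a statistic of order $2d-1$ in $G$ alone.

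Second, and more importantly, the remainder control. Your plan to handle the non-polynomial tail of $\det^{-1/2}$ by splitting on $\{\sum_i X_i G_iG_i^T \succeq c n \mu I_d\}$ and invoking anti-concentration on the complement is replaced in the paper by a much cleaner device: a reverse H\"older inequality for \emph{negative} moments of tetrahedral polynomials with nonnegative coefficients in i.i.d.\ positive variables (their Lemma~\ref{le:hypercontractivity-negative} and Corollary~\ref{cor:moment-comparison-negative}). It gives $\|Q(X_1,\ldots,X_n)\|_1 \le \sigma^{-d}\|Q(X_1,\ldots,X_n)\|_{-\kappa}$ whenever $\E X_1^{-\rho}<\infty$ for some $\rho\ge\kappa$. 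Since $\E X_1^{-\rho}<\infty$ for all $\rho < p/2+1/2$ (in particular for some $\rho>1/2$), and since $\sum_{\ii} a_{\ii}=1$, one obtains $\E_X Y_n^{-\rho}\le C_{p,d}$ \emph{deterministically in the Gaussian randomness}. That single bound, combined with ordinary H\"older and hypercontractivity for the positive-moment factors, controls the entire remainder in every $L_q$ with no event-splitting and no small-ball estimate. This is the technical heart of the $\Wass_q$ upgrade and the piece your outline is missing; your proposed route via small-eigenvalue anti-concentration might be workable, but would be considerably harder to execute uniformly in $q$ and is exactly the obstacle you flagged yourself.
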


As a corollary to the  convergence in Wasserstein distances we immediately obtain the following refinement of the asymptotic expansion \eqref{eq:1st-order-B_pn-normalized} of $\E \Vol_{n-d}(B_p^n\cap H_n)$ .

\begin{cor}\label{cor:mean-approximation} In the setting of Theorem \ref{thm:CLT},
\begin{displaymath}
\E \Vol_{n-d}(B_p^n \cap H_n) = \frac{\Big(2\Gamma\Big(1+\frac{1}{p}\Big)\Big)^{n-d}}{\Gamma\Big(1+\frac{n-d}{p}\Big)}\Big(a_{p,d} + \frac{b_{p,d}}{n} + o(n^{-3/2})\Big).
\end{displaymath}
\end{cor}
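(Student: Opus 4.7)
The plan is to deduce the corollary directly from the Wasserstein $\Wass_1$ convergence asserted in Theorem \ref{thm:CLT}. The key elementary fact I would use is that $\Wass_1$ convergence preserves first moments: if $P_n \to P$ in $\Wass_1$ and $(X_n, X)$ is any coupling with marginals $P_n, P$, then $|\E X_n - \E X| \le \E|X_n - X|$, so taking the infimum over couplings yields $|\E X_n - \E X| \le \Wass_1(P_n, P) \to 0$.

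Applied to the random variable
\begin{displaymath}
Y_n := n^{3/2}\Big(\frac{\Vol_{n-d}(H_n\cap B_p^n)}{\Vol_{n-d}(B_p^{n-d})} -   a_{p,d} - \frac{1}{n}b_{p,d}\Big),
\end{displaymath}
Theorem \ref{thm:CLT} asserts that $\mathcal{L}(Y_n)$ converges to the centered Gaussian of variance $\Sigma_{p,d}^2$ in $\Wass_q$ for every $q \ge 1$; in particular it converges in $\Wass_1$. Since the limit has mean zero, the above observation gives $\E Y_n \to 0$, i.e.
\begin{displaymath}
\E \frac{\Vol_{n-d}(H_n\cap B_p^n)}{\Vol_{n-d}(B_p^{n-d})} = a_{p,d} + \frac{b_{p,d}}{n} + o(n^{-3/2}).
\end{displaymath}

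To conclude, I would multiply both sides by the explicit value of $\Vol_{n-d}(B_p^{n-d})$, which by formula \eqref{eq:volume-Bpn} (applied in dimension $n-d$) equals $(2\Gamma(1+1/p))^{n-d}/\Gamma(1+(n-d)/p)$, producing exactly the asserted expansion. No genuine obstacle arises: the corollary is essentially a free consequence of the fact, already built into Theorem \ref{thm:CLT}, that $\Wass_q$ convergence for some $q \ge 1$ is strictly stronger than weak convergence and encodes convergence of first moments. The substantive work has already been done in establishing the $\Wass_1$ (in fact $\Wass_q$ for all $q$) convergence in Theorem \ref{thm:CLT}.
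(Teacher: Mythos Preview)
Your proposal is correct and matches the paper's own reasoning: the paper states Corollary \ref{cor:mean-approximation} as an immediate consequence of the $\Wass_q$ convergence in Theorem \ref{thm:CLT} without providing a separate proof. The observation you spell out, that $\Wass_1$ convergence implies convergence of first moments (equivalently, the characterization cited just before Theorem \ref{thm:CLT}), together with the explicit formula \eqref{eq:volume-Bpn} for $\Vol_{n-d}(B_p^{n-d})$, is exactly the intended one-line derivation.
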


\begin{remark}
The implicit constant in $o(n^{-3/2})$ in the above theorem depends on $p$ and $d$. In principle our proofs allow for obtaining estimates with explicit dependence on these parameters, also for moments of higher order, which would lead to a concentration of measure type result. However we do not pursue this direction.
\end{remark}

The method of proof of Theorem \ref{thm:CLT} is restricted to $p \in (0,2)$. In the special case of $d=1$ we can however extend the Central Limit Theorem to arbitrary $p \in (0,\infty)$.

\begin{theorem}\label{thm:d=1-p-arbitrary}
Let $H_n$ be a random Haar distributed subspace of $\R^n$ of codimension one. For $p \in (0,\infty)$ let $a_{p,1}, b_{p,1}, \Sigma_{p,1}^2$ be defined by
 \eqref{eq:ab} and \eqref{eq:sigma-def}, i.e.,
\begin{displaymath}
a_{p,1} = \frac{\sqrt{2}\Gamma(1+1/p)\Gamma(1/p)^{1/2}}{\sqrt{\pi} \Gamma(3/p)^{1/2}}, \;
b_{p,1} =   \frac{3\sqrt{2}\Gamma(1+1/p)}{8\sqrt{\pi}}\Big(\frac{\Gamma(1/p)}{\Gamma(3/p)}\Big)^{5/2}\Big(\frac{\Gamma(5/p)}{\Gamma (1/p)} - 3 \frac{\Gamma(3/p)^2}{\Gamma (1/p)^2}\Big)
\end{displaymath}
and
\begin{displaymath}
  \Sigma_{p,1}^2 =  \frac{3}{4\pi}\Big(\frac{\Gamma(1/p)}{\Gamma(3/p)}\Big)^5\Gamma(1+1/p)^2 \Big(\frac{\Gamma(5/p)}{\Gamma (1/p)} - 3 \frac{\Gamma(3/p)^2}{\Gamma (1/p)^2}\Big)^2.
\end{displaymath}
Then the sequence of random variables
\begin{displaymath}
  n^{3/2}\Big(\frac{\Vol_{n-1}(B_p^n\cap H)}{\Vol_{n-1}(B_{p}^{n-1})}- a_{p,1} - \frac{1}{n} b_{p,1}\Big)
  \end{displaymath}
converges in distribution to a mean zero Gaussian random variable with variance $\Sigma_{p,1}^2$.
\end{theorem}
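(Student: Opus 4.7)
The plan is to implement the Fourier-analytic strategy for $d=1$ sketched in the introduction. Let $\theta$ be uniform on $S^{n-1}$, so that $H_n$ has the same law as $\theta^\perp$. Let $X_1,\ldots,X_n$ be i.i.d.\ with the $p$-generalized Gaussian density $f_p(x)\propto e^{-|x|^p/p}$, and set $S_n(\theta):=\sum_{i=1}^n \theta_i X_i$. Combining the Schechtman--Zinn probabilistic representation of the uniform measure on $B_p^n$ with the inverse Fourier formula for the section function -- the route taken by Chasapis, Nayar and Tkocz for $p>2$, but equally valid for any $p>0$ -- one obtains an identity of the form
\begin{equation*}
\frac{\Vol_{n-1}(B_p^n\cap \theta^\perp)}{\Vol_{n-1}(B_p^{n-1})}\;=\;\alpha_{p,n}\,f_{S_n(\theta)}(0),
\end{equation*}
with $\alpha_{p,n}$ an explicit deterministic constant whose Stirling expansion matches $a_{p,1}$ at leading order. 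The theorem is thus reduced to an asymptotic expansion of the density at $0$ of a weighted sum of independent centered variables, with fluctuations tracked to order $n^{-3/2}$.

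Conditionally on $\theta$, I then apply an Edgeworth expansion to $f_{S_n(\theta)}(0)$. Writing $\sigma^2=\E X_1^2=\Gamma(3/p)/\Gamma(1/p)$ and $\kappa_4=\E X_1^4-3\sigma^4=\Gamma(5/p)/\Gamma(1/p)-3(\Gamma(3/p)/\Gamma(1/p))^2$, the variance of $S_n(\theta)$ equals $\sigma^2$ (since $|\theta|=1$) and the sum of fourth cumulants of the summands equals $\kappa_4|\theta|_4^4$. Since odd Hermite polynomials vanish at $0$ and $H_4(0)=3$, the expansion takes the form
\begin{equation*}
f_{S_n(\theta)}(0)\;=\;\frac{1}{\sigma\sqrt{2\pi}}\,\Big(1+\frac{\kappa_4}{8\sigma^4}\,|\theta|_4^4+R_n(\theta)\Big),
\end{equation*}
where $R_n(\theta)$ is an error. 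To finish, I will use that for $\theta$ uniform on $S^{n-1}$ one has $\E|\theta|_4^4=3/(n+2)$ and $n^{3/2}(|\theta|_4^4-3/n)$ converges in distribution to a centered Gaussian with an explicit variance $v^2$; both facts follow from the representation $\theta=G/|G|$ for a standard Gaussian $G$ and a direct expansion of $A_n/B_n^2$ with $A_n=\sum G_i^4$, $B_n=\sum G_i^2$, combined with the one-dimensional CLT. Assembling these ingredients, the leading deterministic contribution reconstructs $a_{p,1}$; the $1/n$ correction coming from $\E|\theta|_4^4$ together with the next term in the Stirling expansion of $\alpha_{p,n}$ produces $b_{p,1}/n$; and the Gaussian fluctuation of $|\theta|_4^4$ produces a limit of variance $(\kappa_4/(8\sigma^4))^2\,a_{p,1}^2\,v^2$, which after substitution matches the closed-form expression $\Sigma_{p,1}^2$ in the statement.

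The main obstacle will be controlling the remainder $R_n(\theta)$ uniformly enough in $\theta$ to guarantee that $n^{3/2}R_n(\theta)$ vanishes in probability. A bound uniform over all $\theta\in S^{n-1}$ is unattainable, because for $\theta$ concentrated on few coordinates $S_n(\theta)$ degenerates and the Edgeworth expansion breaks down. The remedy is to work on the high-probability regular event $\{\max_i|\theta_i|\le n^{-1/2+\varepsilon}\}$ with simultaneous control of $|\theta|_4^4$ and $|\theta|_6^6$, whose complement has measure decaying super-polynomially and contributes negligibly to the target asymptotics. On this good set one adapts the classical Fourier derivation of Edgeworth expansions to the non-i.i.d.\ weighted setting: for every $p>0$ the density $f_p$ is smooth and bounded, so $\widehat{f_p}$ is integrable and decays polynomially, which justifies term-by-term integration in the Taylor expansion of $\prod_i\widehat{f_p}(t\theta_i)$ near $t=0$; pushing this through with quantitative remainder bounds depending only on $\|\theta\|_\infty$ and a few low-order $|\theta|_{2q}^{2q}$ is the key technical step. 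Once this uniform Edgeworth estimate is in place, the remaining calculations are routine.
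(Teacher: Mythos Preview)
Your proposal is essentially the paper's proof. Two points where the paper's execution is lighter: with $Y_i$ having density $e^{-\beta_p^p|x|^p}$, $\beta_p=2\Gamma(1+1/p)$, the volume formula (Theorem~\ref{thm:2nd-volume-formula}) is the \emph{exact} identity $\Vol_{n-1}(B_p^n\cap H)/\Vol_{n-1}(B_p^{n-1})=f(0)$, so your $\alpha_{p,n}\equiv1$, no Stirling correction is needed, and $b_{p,1}$ arises entirely from the $3/n$ centering of $\sum g_i^4/(\sum g_i^2)^2$ in the Edgeworth term; and instead of rederiving the Edgeworth expansion with quantitative bounds on the good event $\{\|\theta\|_\infty\le n^{-1/2+\varepsilon}\}$, the paper invokes Petrov's non-i.i.d.\ density expansion (Theorem~\ref{thm:Edgeworth}) as a black box, verifying its moment and characteristic-function hypotheses hold $\p_G$-almost surely via the SLLN, which delivers the required $o(n^{-3/2})$ remainder without further Fourier analysis.
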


Using the same method as in the proof of Theorem \ref{thm:d=1-p-arbitrary} we can obtain a result concerning more general sections of the cube $B_\infty^n$ by random hyperplanes. Note that while for fixed $p < \infty$ $\Vol_n(B_p^n) \to 0$ as $n \to \infty$, in the case of $p = \infty$ we have $\Vol_n(B_p^n) = 2^n$. For this reason in the limit theorem it is more convenient to normalize the volume of the section by $2^n$.

\begin{theorem}\label{thm:cube}
Let $H_n$ be a random Haar distributed subspace of $\R^n$ of codimension one and let $u$ be a unit vector normal to $H$.
For $x \in \R$ set
\begin{align*}
a(x) & = \sqrt{\frac{3}{2\pi}}e^{-\frac{3x^2}{2}},\\
b(x) & = -\frac{9\sqrt{3}}{20\sqrt{2\pi}}(3x^4-6x^2+1)e^{-\frac{3x^2}{2}}
\end{align*}
For every $x \in \R$,
the sequence
\begin{displaymath}
  n^{3/2}\Big(2^{-n}\Vol(B_\infty^n \cap (xu + H_n)) - a(x) - \frac{b(x)}{n}\Big)
\end{displaymath}
converges in distribution to a centered Gaussian variable with variance
\begin{displaymath}
  \Sigma(x)^2 = \frac{81}{100\pi}e^{-3x^2}(3x^4 - 6x^2+1)^2.
\end{displaymath}
\end{theorem}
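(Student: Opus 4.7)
The plan is to adapt the Edgeworth-expansion strategy already used in the proof of Theorem \ref{thm:d=1-p-arbitrary}. The starting point is the classical probabilistic formula for sections of the cube: if $X_1,\ldots,X_n$ are i.i.d.\ uniform on $[-1,1]$ and $u = (u_1,\ldots,u_n) \in S^{n-1}$ is a unit normal to $H_n$, then
\[
2^{-n}\Vol_{n-1}(B_\infty^n \cap (xu + u^\perp)) = f_{S_n(u)}(x), \qquad S_n(u) := \sum_{i=1}^n u_i X_i,
\]
where $f_{S_n(u)}$ denotes the density of $S_n(u)$. Since $H_n$ is Haar distributed, $u$ is uniform on the sphere, so it suffices to establish a weak limit for $f_{S_n(u)}(x)$ viewed as a random variable in $u$.

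Conditionally on $u$, I would apply the Edgeworth expansion for densities of sums of independent non-identically distributed summands up to the second correction term. The $X_i$ are symmetric, so all odd cumulants vanish; they have variance $\sigma^2=1/3$ and fourth cumulant $\kappa_4(X_i) = -2/15$, giving $\kappa_2(S_n(u)) = 1/3$ and $\kappa_4(S_n(u)) = -\tfrac{2}{15}T_n$ with $T_n := \sum_{i=1}^n u_i^4$. The leading Gaussian density is exactly $a(x) = \sqrt{3/(2\pi)}\,e^{-3x^2/2}$, and the fourth-cumulant correction --- involving the probabilist Hermite polynomial $y\mapsto y^4 - 6y^2 + 3$ rescaled by $\sigma^{-1}=\sqrt{3}$ --- produces
\[
f_{S_n(u)}(x) = a(x)\Big(1 - \tfrac{3}{20} T_n (3x^4 - 6x^2 + 1)\Big) + R_n(x,u),
\]
where $R_n$ gathers the squared $\kappa_4$ and the sixth-and-higher cumulant contributions and is of size $O(\sum u_i^6) + O(T_n^2) = O_\p(n^{-2})$ on a Haar-typical set. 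Using the spherical identity $\E T_n = 3/(n+2)$ one checks that $-\tfrac{3}{20} a(x)(3x^4-6x^2+1)\cdot (3/n)$ is precisely $b(x)/n$, so
\[
n^{3/2}\Big(2^{-n}\Vol(B_\infty^n\cap(xu+H_n)) - a(x) - \tfrac{b(x)}{n}\Big) = -\tfrac{3a(x)(3x^4-6x^2+1)}{20}\sqrt{n}(nT_n - 3) + n^{3/2} R_n(x,u),
\]
and the claim reduces to the CLT $\sqrt{n}(nT_n - 3) \Rightarrow N(0,24)$ together with $n^{3/2}R_n(x,u) = o_\p(1)$.

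The CLT for $T_n$ is a routine delta-method computation: writing $u = g/|g|$ with $g$ standard Gaussian and $T_n = \sum g_i^4 / |g|^4$, Taylor-expanding around $(3n, n)$ yields
\[
nT_n - 3 = \tfrac{1}{n}\sum_{i=1}^n (g_i^4 - 6g_i^2 + 3) + o_\p(n^{-1/2}).
\]
The summand is the fourth probabilist Hermite polynomial at $g_i$, centered with variance $4!=24$, so the i.i.d.\ CLT gives the required limit; combining with the deterministic prefactor produces the variance $\frac{9\cdot 24}{400}\, a(x)^2(3x^4-6x^2+1)^2 = \frac{81}{100\pi}e^{-3x^2}(3x^4-6x^2+1)^2 = \Sigma(x)^2$, and Slutsky's lemma completes the argument.

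The hard part, just as in the proof of Theorem \ref{thm:d=1-p-arbitrary}, is producing a density Edgeworth expansion for $S_n(u)$ with remainder $o_\p(n^{-3/2})$ uniformly in $u$ on a Haar-typical event (strictly better than the pointwise $O(n^{-2})$ cumulant heuristic, since the remainder is inflated by $n^{3/2}$). The Fourier proof needs Lyapunov-type moment bounds on the summands together with sufficiently fast decay of $|\E e^{i\xi S_n(u)}|$ away from the origin; both hold on the event $\{\max_i u_i^2 \lesssim (\log n)/n,\ T_n \lesssim 1/n\}$, which has Haar-probability $1 - o(n^{-K})$ for every $K$, exploiting the $\mathrm{sinc}$-type decay of the characteristic function of the uniform law on $[-1,1]$. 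This is essentially the technical step already carried out for the $p$-generalised Gaussian in the proof of Theorem \ref{thm:d=1-p-arbitrary}; the uniform base distribution here only simplifies the Fourier and moment estimates.
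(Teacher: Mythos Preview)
Your proposal is correct and follows essentially the same route as the paper: the paper also writes $2^{-n}\Vol_{n-1}(B_\infty^n\cap(xu+H_n))=f_n(x)$ for the density of $\sum g_iY_i/|G_n|$ with $Y_i$ uniform on $[-1,1]$, invokes the same Edgeworth expansion (via Theorem \ref{thm:Edgeworth}, verified $\p_G$-a.s.\ exactly as in the proof of Theorem \ref{thm:d=1-p-arbitrary}) to get the expansion with $\frac{\sum g_i^4}{(\sum g_i^2)^2}=T_n$, and then reuses the CLT $n^{3/2}(T_n-3/n)\Rightarrow N(0,24)$ already established there. Your only cosmetic difference is phrasing the Edgeworth step in terms of a ``Haar-typical event'' rather than the paper's ``$\p_G$-almost sure'' verification of the hypotheses of Theorem \ref{thm:Edgeworth}; the latter is what makes the remainder control rigorous without redoing the Fourier estimates, so you should cite that theorem directly rather than sketch a separate argument.
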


Let us conclude the presentation of our results with a corollary concerning the intersection bodies of $B_p^n$. Recall that the radial function of a star-shaped body $C\subset \R^n $ in direction $u \in S^{n-1}$, is defined as $\rho_C(u) = \sup\{t> 0\colon u t \in C\}$.  The intersection body of a body $K$, introduced by Lutwak \cite{MR963487}, is a star-shaped body $\mathcal{I}K$ whose radial function is given by
$\rho_{\mathcal{I}K}(u) = \Vol_{n-1}(K\cap u^\perp)$, where $u^\perp$ is the hyperplane orthogonal to $u$. Thus our results for sections of codimension $d=1$ can be rephrased as a limit theorem for $\rho_{\mathcal{I}B_p^n}(\eta)$ for a random vector $\eta$ distributed uniformly on $S^{n-1}$. With some additional work one can infer from it a corollary concerning $\|\eta\|_{\mathcal{I}B_p^n} := \rho_{\mathcal{I}B_p^n}(\eta)^{-1}$ -- the Minkowski functional associated to $\mathcal{I}B_p^n$. We state it only for $p < \infty$ but clearly as similar result can be obtained for the cube.

\begin{cor}\label{cor:intersection}
Assume that $p \in (0,\infty)$ and let $\|\cdot\|_{\mathcal{I}B_p^n}$ be the Minkowski functional of the intersection body $\mathcal{I}B_p^n$. Let also $\eta$ be a random vector distributed uniformly on the sphere $S^{n-1}$, and let $a_{p,1}, b_{p,1}$ and $\Sigma_{p,1}^2$ be as in Theorem \ref{thm:d=1-p-arbitrary}.
Then as $n\to \infty$, the sequence of random variables
\begin{displaymath}
n^{3/2}\Big( \|\eta\|_{\mathcal{I}B_p^n} \Vol_{n-1}(B_p^{n-1}) - \frac{1}{a_{p,1}} + \frac{b_{p,1}}{na^2_{p,1}}\Big)
\end{displaymath}
converges in distribution to a Gaussian random variable with mean zero and variance
\begin{displaymath}
a_{p,1}^{-4} \Sigma_{p,1}^2= \frac{3\pi}{16}\Big(\frac{\Gamma(1/p)}{\Gamma(3/p)}\Big)^3\Gamma(1+1/p)^{-2} \Big(\frac{\Gamma(5/p)}{\Gamma (1/p)} - 3 \frac{\Gamma(3/p)^2}{\Gamma (1/p)^2}\Big)^2.
\end{displaymath}
\end{cor}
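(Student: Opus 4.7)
The plan is to derive the corollary from Theorem \ref{thm:d=1-p-arbitrary} via a (second-order) delta method applied to the reciprocal map $f(v) = 1/v$. The first step is to rewrite the target quantity in terms of the random variable already handled by that theorem. By definition of the intersection body, $\rho_{\mathcal{I}B_p^n}(u) = \Vol_{n-1}(B_p^n \cap u^\perp)$ for every $u \in S^{n-1}$, hence $\|u\|_{\mathcal{I}B_p^n} = 1/\Vol_{n-1}(B_p^n \cap u^\perp)$. Since for $\eta$ uniform on $S^{n-1}$ the hyperplane $\eta^\perp$ is Haar distributed among codimension-one subspaces of $\R^n$, the quantity of interest can be written as
\begin{displaymath}
  \|\eta\|_{\mathcal{I}B_p^n}\,\Vol_{n-1}(B_p^{n-1}) \;=\; \frac{1}{V_n}, \qquad V_n := \frac{\Vol_{n-1}(B_p^n\cap\eta^\perp)}{\Vol_{n-1}(B_p^{n-1})},
\end{displaymath}
so the problem reduces to studying the asymptotic distribution of $1/V_n$.

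Writing $a := a_{p,1}$, $b := b_{p,1}$, and $R_n := V_n - a - b/n$, Theorem \ref{thm:d=1-p-arbitrary} says that $n^{3/2}R_n$ converges weakly to $\mathcal{N}(0,\Sigma_{p,1}^2)$; in particular $R_n = \mathcal{O}_\p(n^{-3/2})$ and $V_n \to a > 0$ in probability. I would then apply Taylor's theorem with remainder to $f(v)=1/v$ at $v=a$, noting $f'(a) = -1/a^2$, to get
\begin{displaymath}
  n^{3/2}\Big(\frac{1}{V_n} - \frac{1}{a} + \frac{b}{na^2}\Big) \;=\; -\frac{n^{3/2}R_n}{a^2} + \frac{n^{3/2}}{2}f''(\xi_n)(V_n - a)^2
\end{displaymath}
for some $\xi_n$ between $V_n$ and $a$. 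On the event $\{V_n \ge a/2\}$, whose probability tends to one, $f''(\xi_n) = 2/\xi_n^3$ is uniformly bounded, while $(V_n - a)^2 = (b/n + R_n)^2 = \mathcal{O}_\p(n^{-2})$, so the remainder is $\mathcal{O}_\p(n^{-1/2})$. Slutsky's lemma then identifies the weak limit of the left-hand side as $-\Sigma_{p,1}Z/a^2 \sim \mathcal{N}(0,\Sigma_{p,1}^2/a_{p,1}^4)$.

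It remains to check the algebraic identity
\begin{displaymath}
  \frac{\Sigma_{p,1}^2}{a_{p,1}^4} \;=\; \frac{3\pi}{16}\Big(\frac{\Gamma(1/p)}{\Gamma(3/p)}\Big)^3\Gamma(1+1/p)^{-2}\Big(\frac{\Gamma(5/p)}{\Gamma(1/p)} - 3\frac{\Gamma(3/p)^2}{\Gamma(1/p)^2}\Big)^2,
\end{displaymath}
which is a direct manipulation using the definitions of $a_{p,1}$ and $\Sigma_{p,1}^2$ in Theorem \ref{thm:d=1-p-arbitrary}. I do not anticipate any substantive obstacle here: the analytic content is entirely carried by Theorem \ref{thm:d=1-p-arbitrary}, and the corollary amounts to a careful bookkeeping exercise for the delta method. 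The only mild subtlety is that Theorem \ref{thm:d=1-p-arbitrary} provides only weak convergence (rather than Wasserstein convergence, as in Theorem \ref{thm:CLT}), so the Taylor remainder has to be controlled in probability via tightness and restriction to the event $\{V_n \ge a/2\}$, instead of via an $L_q$ bound.
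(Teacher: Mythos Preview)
Your proposal is correct and follows essentially the same delta-method route as the paper: the paper also writes $V_n = a_{p,1} + R_n$, invokes Theorem~\ref{thm:d=1-p-arbitrary}, and then decomposes $1/(a_{p,1}+R_n) - 1/a_{p,1} + b_{p,1}/(na_{p,1}^2)$ into a negligible quadratic remainder and a main term $-a_{p,1}^{-2}(R_n - b_{p,1}/n)$. The only cosmetic difference is that the paper uses the exact algebraic identity $\frac{1}{a+R} - \frac{1}{a} = -\frac{R}{a^2} + \frac{R^2}{a^2(a+R)}$ instead of your Lagrange-form remainder, but this is the same computation.
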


\section{Tools}\label{sec:tools}
\subsection{Stable random variables and the first volume formula}

Recall that for $\alpha \in (0,1)$, a positive random variable $Y$ is called a standard positive $\alpha$-stable random variable if $\E e^{-tY} = e^{-t^\alpha}$ for $t \ge 0$. In this case for $q < \alpha$,
\begin{align}\label{eq:stable-moments}
  \E Y^q = \frac{\Gamma(-q/\alpha)}{\alpha\Gamma(-q)}.
\end{align}
A positive $\alpha$-stable variable has a density which we will henceforth denote by $g_\alpha$. We refer to \cite{MR1280932,MR2166308} for basic information concerning stable random variables.

Our main tool will be the following formula for volumes of sections of $B_p^n$ due to Nayar and Tkocz \cite{MR4055953}. The article \cite{MR4055953} provides the proof in the case of $p=1$ with a remark that the same method works for general $p$. For reader's convenience and to provide the constants in the case $p\neq 1$ in Appendix \ref{app:proof-NT} we sketch the argument from \cite{MR4055953} in full generality.

\begin{theorem}\label{thm:N-T} Let $p \in (0,2)$ and let $H$ be a subspace of $\R^n$ of codimension d. Let $u_1,\ldots,u_d$ be an orthornormal basis in $H^\perp$ and let $v_1,\ldots,v_n$ be the columns of the matrix with rows $u_1,\ldots,u_d$. Then
\begin{align}\label{eq:N-T}
  \Vol_{n-d}(B_p^n\cap H) = \frac{2^{n}}{\pi^{d/2}}\frac{\Gamma\Big(1+ \frac{1}{p}\Big)^n}{\Gamma\Big(1+ \frac{n-d}{p}\Big)}\E\Big( \det\Big(\sum_{j=1}^n \frac{1}{W_j} v_j v_j^T\Big)\Big)^{-1/2},
\end{align}
where $W_j$'s are i.i.d. random variables with density proportional to $t\mapsto \frac{1}{\sqrt{t}}g_{p/2}(t)$.
\end{theorem}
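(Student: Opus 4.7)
The plan is to start from the classical gamma-integral representation of the volume of a star body $K \subseteq \R^m$ with Minkowski functional $\|\cdot\|_K$:
\begin{displaymath}
\Vol_m(K) = \frac{1}{\Gamma(1+m/p)} \int_{\R^m} e^{-\|y\|_K^p} dy.
\end{displaymath}
Applied to $K = B_p^n \cap H$ inside the $(n-d)$-dimensional Euclidean space $H$, whose Minkowski functional is just $|\cdot|_p$ restricted to $H$, this reduces the problem to evaluating $\int_H e^{-|y|_p^p} dy$ and multiplying by $1/\Gamma(1+(n-d)/p)$.

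Next I would invoke the Gaussian mixture representation valid for $p \in (0,2)$: since $g_{p/2}$ is the density of a standard positive $p/2$-stable random variable, its Laplace transform gives $e^{-|x|^p} = \int_0^\infty e^{-x^2 t} g_{p/2}(t) dt$. Applying this coordinatewise and using Fubini, the inner integration becomes Gaussian. Parametrizing $H$ by $y = \sum_k z_k e_k$ with $z \in \R^{n-d}$, where the $e_k$'s form an orthonormal basis of $H$ stored as the rows of a matrix $E$ of size $(n-d) \times n$, the change of variables is an isometry, and the standard Gaussian integral yields
\begin{displaymath}
\int_H e^{-|y|_p^p} dy = \pi^{(n-d)/2} \, \E_V \det(E D E^T)^{-1/2},
\end{displaymath}
where $D = \mathrm{diag}(V_1,\ldots,V_n)$ and the $V_j$'s are i.i.d.\ with density $g_{p/2}$.

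The key algebraic step, which I expect to be the only non-routine piece, is the duality identity $\det(E D E^T) = (\prod_j V_j)\det(\sum_j V_j^{-1} v_j v_j^T)$. I would derive it via Cauchy--Binet: the left-hand side expands as $\sum_{|S|=n-d} (\det E_S)^2 \prod_{j \in S} V_j$, while the right-hand side equals $(\prod_j V_j) \sum_{|T|=d} (\det U_T)^2 \prod_{j \in T} V_j^{-1}$. Stacking $U$ on top of $E$ produces an orthogonal $n \times n$ matrix, so the standard complementary-minor identity gives $(\det U_T)^2 = (\det E_{T^c})^2$, matching the two expansions term by term via the substitution $S = T^c$. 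Plugging the identity back in produces an extra factor $\prod_j V_j^{-1/2}$, which I would absorb via a change of measure into the density proportional to $t^{-1/2} g_{p/2}(t)$ of $W_j$; the normalizing constant $\E V^{-1/2} = 2\Gamma(1+1/p)/\sqrt{\pi}$ follows from \eqref{eq:stable-moments} with $\alpha = p/2$, $q = -1/2$, and is finite since $-1/2 < p/2$ for every $p > 0$. Assembling the factors $(2\Gamma(1+1/p)/\sqrt{\pi})^n$ from the change of measure, $\pi^{(n-d)/2}$ from the Gaussian integral, and $1/\Gamma(1+(n-d)/p)$ from the gamma identity recovers exactly the prefactor in \eqref{eq:N-T}. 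Everything else is routine bookkeeping of constants; the only integrability point worth flagging is the finiteness of $\E V^{-1/2}$, which holds throughout the relevant range of $p$.
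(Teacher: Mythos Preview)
Your argument is correct, and it takes a genuinely different route from the paper's proof. The paper works in the ambient space $\R^n$ via the slab-limit formula
\[
\Gamma\bigl(1+\tfrac{n-d}{p}\bigr)\Vol_{n-d}(H\cap B_p^n)=\lim_{\varepsilon\to 0}\varepsilon^{-d}\int_{H(\varepsilon)}e^{-|x|_p^p}\,dx,
\]
interprets the right-hand side as a probability for the random vector $\sum_i X_i v_i$ with $X_i$ having density proportional to $e^{-|x|^p}$, invokes the representation $X_i=(2W_i)^{-1/2}g_i$ from \cite{MR3846841}, and reads off the answer from the Gaussian density at the origin on the $d$-dimensional space $(\Ker A)^\perp$; the matrix $\sum_j W_j^{-1}v_jv_j^T$ appears directly as $2AA^T$, so no minor identity is needed. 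Your approach instead integrates directly on $H$, producing a Gaussian integral in the $(n-d)$-dimensional coordinates and hence the ``wrong'' determinant $\det(EDE^T)$; you then repair this with the complementary-minor identity for orthogonal matrices (via Cauchy--Binet) and a change of measure. Your route avoids the $\varepsilon$-limit and the external reference to \cite{MR3846841}, at the price of the extra algebraic step; the paper's route is slightly more probabilistic and sidesteps the duality identity entirely. Both are clean; yours is perhaps more self-contained. One small remark: your Fubini steps are all between nonnegative integrands, so Tonelli covers them without any separate integrability check.
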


\begin{cor}\label{cor:N-T}
In the setting of Theorem \ref{thm:N-T},
\begin{displaymath}
  \frac{\Vol_{n-d}(B_p^n\cap H)}{\Vol_{n-d}(B_p^{n-d})} = \frac{2^{d}\Gamma\Big(1+ \frac{1}{p}\Big)^d}{\pi^{d/2}} \E\Big( \det\Big(\sum_{j=1}^n \frac{1}{W_j} v_j v_j^T\Big)\Big)^{-1/2}.
\end{displaymath}
\end{cor}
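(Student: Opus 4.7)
The plan is to simply substitute the formula from Theorem \ref{thm:N-T} into the left-hand side and divide by the explicit formula \eqref{eq:volume-Bpn} for $\Vol_{n-d}(B_p^{n-d})$. Since both sides involve the same expectation of $\det(\sum_j W_j^{-1} v_j v_j^T)^{-1/2}$, the only thing to check is that the prefactor simplifies as claimed — this is a short computation with $\Gamma$-values cancelling.

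More concretely, from \eqref{eq:volume-Bpn} applied with $n$ replaced by $n-d$,
\begin{displaymath}
  \Vol_{n-d}(B_p^{n-d}) = \frac{(2\Gamma(1+1/p))^{n-d}}{\Gamma(1+(n-d)/p)}.
\end{displaymath}
Dividing the identity \eqref{eq:N-T} by this expression, the factors $\Gamma(1+(n-d)/p)$ in the numerator and denominator cancel, and $\Gamma(1+1/p)^n/\Gamma(1+1/p)^{n-d} = \Gamma(1+1/p)^d$ while $2^n/2^{n-d} = 2^d$. This yields the claimed prefactor $2^d \Gamma(1+1/p)^d / \pi^{d/2}$ in front of the expectation.

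There is no real obstacle here: the corollary is a one-line algebraic consequence of Theorem \ref{thm:N-T} and the formula for $\Vol_n(B_p^n)$. The only thing worth emphasizing in the write-up is that the right-hand side depends on $H$ only through the vectors $v_1,\ldots,v_n$ (equivalently, through the orthonormal basis $u_1,\ldots,u_d$ of $H^\perp$), so the normalized volume ratio inherits the same probabilistic representation as the volume itself, which is the form later needed to apply reverse Hölder-type inequalities and $U$-statistic CLTs.
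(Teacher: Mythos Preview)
Your proposal is correct and is exactly the intended derivation: the paper states the corollary without proof, as it follows immediately by dividing \eqref{eq:N-T} by the explicit volume formula \eqref{eq:volume-Bpn} applied with $n-d$ in place of $n$, which is precisely the computation you carry out.
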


In the sequel we will need a formula for moments given in the next lemma.
\begin{lemma}\label{le:W-moments} For $\alpha \in (0,1)$ let $W$ be a random variable with density proportional to $t\mapsto \frac{1}{\sqrt{t}}g_{\alpha}(t)$. Then for $q < \alpha + 1/2$,
\begin{displaymath}
  \E W^q = \frac{\Gamma\Big(\frac{1-2q}{2\alpha}\Big)\Gamma\Big(\frac{1}{2}\Big)}{\Gamma\Big(\frac{1-2q}{2}\Big)\Gamma\Big(\frac{1}{2\alpha}\Big)}.
\end{displaymath}
\end{lemma}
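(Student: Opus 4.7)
The plan is to compute $\E W^q$ directly from the defining density of $W$ by reducing it to a moment of the underlying positive $\alpha$-stable random variable $Y$ (with density $g_\alpha$), for which formula \eqref{eq:stable-moments} is available.

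First, write $W$'s density as $f_W(t) = c\, t^{-1/2} g_\alpha(t)$ on $(0,\infty)$, where $c$ is the normalizing constant. Then for any exponent $q$ for which the integral converges,
\begin{displaymath}
  \E W^q \;=\; c \int_0^\infty t^{q-1/2} g_\alpha(t)\, dt \;=\; c\, \E Y^{q-1/2}.
\end{displaymath}
The constant $c$ is determined by the condition $\E W^0 = 1$, i.e.\ $c = 1/\E Y^{-1/2}$.

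Next I apply \eqref{eq:stable-moments} twice. That identity requires the moment exponent to be strictly less than $\alpha$; taking the exponent $-1/2 < \alpha$ (which always holds) and $q-1/2 < \alpha$ (equivalent to the stated assumption $q < \alpha + 1/2$) covers both uses. We get
\begin{displaymath}
  \E Y^{-1/2} \;=\; \frac{\Gamma\bigl(1/(2\alpha)\bigr)}{\alpha\, \Gamma(1/2)}, \qquad
  \E Y^{q-1/2} \;=\; \frac{\Gamma\bigl(\tfrac{1-2q}{2\alpha}\bigr)}{\alpha\, \Gamma\bigl(\tfrac{1-2q}{2}\bigr)}.
\end{displaymath}
Substituting these into the formula for $\E W^q = \E Y^{q-1/2}/\E Y^{-1/2}$, the factors of $\alpha$ cancel and one obtains exactly the claimed expression
\begin{displaymath}
  \E W^q \;=\; \frac{\Gamma\bigl(\tfrac{1-2q}{2\alpha}\bigr) \Gamma(1/2)}{\Gamma\bigl(\tfrac{1-2q}{2}\bigr) \Gamma\bigl(\tfrac{1}{2\alpha}\bigr)}.
\end{displaymath}

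There is essentially no obstacle: the argument is a one-line reduction to the known stable moment formula, modulo checking the finiteness condition, which gives precisely $q < \alpha + 1/2$. No additional analytic tools (contour integration, Mellin transforms, etc.) are required beyond \eqref{eq:stable-moments}.
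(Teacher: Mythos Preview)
Your proof is correct and follows essentially the same route as the paper: write $\E W^q = \E Y^{q-1/2}/\E Y^{-1/2}$ for a standard positive $\alpha$-stable variable $Y$, then apply the moment formula \eqref{eq:stable-moments} to both numerator and denominator. You even make the finiteness check $q-1/2<\alpha$ explicit, which the paper leaves implicit.
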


\begin{proof} Let $Y$ be a standard positive $\alpha$-stable random variable. Noting that the density of $W$ equals to $\frac{1}{\sqrt{t} \E Y^{-1/2}} g_\alpha(t)$, we get
\begin{displaymath}
  \E W^q = \frac{\E Y^{q-1/2}}{\E Y^{-1/2}} = \frac{\Gamma\Big(\frac{1-2q}{2\alpha}\Big)\Gamma\Big(\frac{1}{2}\Big)}{\Gamma\Big(\frac{1-2q}{2}\Big)\Gamma\Big(\frac{1}{2\alpha}\Big)},
\end{displaymath}
where in the last equality we used \eqref{eq:stable-moments}.
\end{proof}

\subsection{The second volume formula}
In the proof of Theorem \ref{thm:d=1-p-arbitrary} we will use another formula for volumes of sections of $B_p^n$. We will use only the special case for codimension one, but we formulate it in full generality.
The case $d=1$ is stated in \cite[Corollary 13]{chasapis2021slicing}, the general case also follows from arguments presented therein.

\begin{theorem}\label{thm:2nd-volume-formula}
Let $p \in (0,\infty)$ and let $H$ be a subspace of $\R^n$ of codimension $d$. Let $u_1,\ldots,u_d$ be an orthonormal basis in $H^\perp$ and let $v_1,\ldots,v_n$ be the columns of the matrix with rows $u_1,\ldots,u_d$. Let moreover $Y_1,\ldots,Y_n$ be independent random variables with density $e^{-\beta_p^p|x|^p}$, where $\beta_p = 2\Gamma(1+1/p)$. Then
\begin{displaymath}
\frac{\Vol_{n-d}(B_p^n\cap H)}{\Vol_{n-d}(B_p^{n-d})} = f(0),
\end{displaymath}
where $f \colon \R^d \to [0,\infty)$ is the continuous version of the density of the $\R^d$-valued random vector $\sum_{i=1}^n v_i Y_i$.
\end{theorem}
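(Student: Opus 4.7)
The plan is to use the probabilistic/Fourier-analytic identity that represents densities of linear images as cross-sectional integrals, combined with polar integration in the subspace $H$ with respect to the gauge of $B_p^n \cap H$.

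First I would set up the linear map $U: \R^n \to \R^d$ defined by the matrix with rows $u_1,\ldots,u_d$ (so its columns are $v_1,\ldots,v_n$). By construction $UY = \sum_{i=1}^n v_i Y_i$, and since $u_1,\ldots,u_d$ are orthonormal, $UU^T = I_d$. Moreover $\ker U = H$. Because the joint density of $Y = (Y_1,\ldots,Y_n)$ is $e^{-\beta_p^p|y|_p^p}$ and $U$ has orthonormal rows, the coarea formula (equivalently disintegration along the orthogonal complement of $H$) yields, for any test function $\phi$ on $\R^d$,
\begin{displaymath}
  \E \phi(UY) = \int_{\R^d} \phi(z)\,dz \int_{H_z} e^{-\beta_p^p |x|_p^p}\, d\mathcal{H}^{n-d}(x),
\end{displaymath}
where $H_z = U^{-1}(z)$ is an affine translate of $H$ by a vector in $H^\perp$. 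Hence $f$ has a continuous version given by $f(z) = \int_{H_z} e^{-\beta_p^p|x|_p^p}\, d\mathcal{H}^{n-d}(x)$, so in particular
\begin{displaymath}
  f(0) = \int_H e^{-\beta_p^p |x|_p^p}\, d\mathcal{H}^{n-d}(x).
\end{displaymath}

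Next I would evaluate this integral by polar integration inside the $(n-d)$-dimensional subspace $H$. The set $K := B_p^n \cap H$ is a star body in $H$ whose radial gauge at $x\in H$ equals $|x|_p$ (the $\ell_p^n$-quasinorm restricted to $H$), so the standard change to polar coordinates on $H$ gives
\begin{displaymath}
  \int_H e^{-\beta_p^p |x|_p^p}\, d\mathcal{H}^{n-d}(x) = (n-d)\,\Vol_{n-d}(B_p^n\cap H)\int_0^\infty e^{-\beta_p^p r^p} r^{n-d-1}\, dr.
\end{displaymath}
The one-dimensional integral evaluates, via $u = \beta_p^p r^p$, to $\Gamma((n-d)/p)/(p\,\beta_p^{n-d})$, so the factor in front of $\Vol_{n-d}(B_p^n\cap H)$ equals $\Gamma(1+(n-d)/p)/\beta_p^{n-d}$.

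Finally, I would match this with \eqref{eq:volume-Bpn} applied in dimension $n-d$: since $\beta_p = 2\Gamma(1+1/p)$, we have $\beta_p^{n-d}/\Gamma(1+(n-d)/p) = \Vol_{n-d}(B_p^{n-d})$, and the identity $f(0) = \Vol_{n-d}(B_p^n\cap H)/\Vol_{n-d}(B_p^{n-d})$ follows. The only mildly delicate point is justifying the continuous version of $f$ (so that pointwise evaluation at $0$ makes sense): for $p\ge 1$ it is immediate since $Y_i$ has a smooth density with all moments, and the resulting density $f$ is in fact $C^\infty$; for $p\in(0,1)$ one argues by the same coarea identity, and the integrand in the cross-sectional formula is continuous in $z$ since $z\mapsto |x+w(z)|_p$ is continuous for any fixed $x\in H$ (where $w(z)\in H^\perp$ is the unique preimage of $z$ under $U|_{H^\perp}$), together with uniform integrability guaranteed by the exponential decay.
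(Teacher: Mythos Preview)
Your proposal is correct. The paper does not give its own proof of this statement, citing instead \cite{chasapis2021slicing}; but the same two ingredients you use---identifying the density at $0$ with the sectional integral $\int_H e^{-\beta_p^p|x|_p^p}\,d\mathcal{H}^{n-d}(x)$, and evaluating the latter by polar integration in $H$---also underlie the appendix proof of Theorem~\ref{thm:N-T} (where the first step appears as the ``well known formula''~\eqref{eq:last-one}, phrased via a thin-slab limit rather than the coarea formula).
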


\subsection{Uniform random subspaces and the uniform spherical distribution}

Let us now introduce basic facts concerning uniform random subspaces of $\R^n$ and uniform distribution on $S^{n-1}$, which we will need in the proofs. We refer the reader, e.g., to the classical monograph \cite{MR856576} by Milman and Schechtman for a more detailed account.

By $G_{n,k}$ we will denote the Grassmanian, i.e., the space of all $k$-dimensional subspaces of $\R^n$. It is endowed with the unique normalized Haar measure inherited from the natural action of the orthogonal group. Whenever we speak about a random $k$-dimensional subspace of $\R^n$, we mean a random element of $G_{n,k}$ distributed according to this measure. It follows from the uniqueness of Haar measure that $H$ is a random $k$-dimensional subspace if and only if its orthogonal complement $H^\perp$ is a random $(n-k)$-dimensional subspace.

Basic properties of Gaussian random variables imply that if $G_1,\ldots,G_k$  are i.i.d. standard Gaussian vectors in $\R^n$, then $\spa(G_1,\ldots,G_k)$ is a random $k$-dimensional subspace of $\R^n$. Moreover if $u_1,\ldots,u_k$ are obtained from $G_1,\ldots,G_k$ as a result of the Gram-Schmidt orthogonalization, then each of the random vectors $u_i$ is distributed uniformly on the sphere $S^{n-1}$.

An important role in our arguments will be played by integrability properties of the random vectors $u_i$, which will follow from the classical concentration inequality. It is a consequence of the isoperimetic inequality on the sphere due to Lev\'y \cite{MR0041346} and Schmidt \cite{MR28600,MR34044}. Its crucial role in high dimensional geometry and probability was first observed by V. Milman in his proof of the Dvoretzky theorem \cite{MR0293374}.

\begin{theorem}\label{thm:spherical} Let $u$ be a random vector distributed uniformly on the sphere $S^{n-1}$. Then for any 1-Lipschitz function $f\colon S^{n-1}\to \R$, and any $t > 0$,
\begin{displaymath}
  \p(|f(u) - \E f(u)| \ge t) \le 2\exp(-cn t^2),
\end{displaymath}
where $c$ is a universal constant.
\end{theorem}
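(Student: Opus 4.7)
The plan is to deduce Theorem \ref{thm:spherical} from the classical isoperimetric inequality on $S^{n-1}$ due to L\'evy and Schmidt, which I would take as a black box: among all Borel subsets of prescribed normalized surface measure, geodesic balls (spherical caps) minimize the measure of every geodesic $t$-enlargement $A_t = \{u \in S^{n-1} \colon d(u,A) \le t\}$. The strategy is then the standard reduction \emph{isoperimetry $\Rightarrow$ concentration} that has become the paradigmatic tool of asymptotic geometric analysis, due originally to V.\ Milman.

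First, I would work with a median rather than the mean. Fix a median $m$ of $f(u)$ under the normalized surface measure $\sigma$ on $S^{n-1}$, and set $A = \{u \in S^{n-1} \colon f(u) \le m\}$, so $\sigma(A) \ge 1/2$. Because $f$ is $1$-Lipschitz with respect to the Euclidean distance, and the geodesic distance on the sphere dominates the Euclidean one, $f$ is also $1$-Lipschitz with respect to the geodesic distance; hence $A_t \subseteq \{u \colon f(u) \le m+t\}$. By isoperimetry $\sigma(A_t) \ge \sigma(C_t)$ for any cap $C$ of measure $1/2$, i.e.\ a hemisphere. The complement of the $t$-enlargement of a hemisphere is a spherical cap of geodesic radius $\pi/2 - t$, and its measure is a ratio of integrals of $\sin^{n-2}$ that is estimated by the elementary inequality $\cos\theta \le e^{-\theta^2/2}$, yielding $\sigma(S^{n-1}\setminus C_t) \le e^{-cnt^2}$ for a universal $c>0$. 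Applying the same reasoning to $-f$ gives the lower tail, so $\sigma(|f - m| \ge t) \le 4 e^{-cnt^2}$.

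Second, I would perform the routine median-to-mean passage. Integrating the above tail yields $|\E f(u) - m| \le C/\sqrt{n}$, and the triangle inequality then converts deviation from the median into deviation from the mean at the cost of adjusting the universal constant (for $t \gtrsim 1/\sqrt{n}$ the shift is absorbed into the exponent, and for smaller $t$ the trivial bound of $2$ on the right-hand side suffices).

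The main obstacle is the isoperimetric inequality itself, whose proof via Schmidt's two-point symmetrization together with a compactness argument is genuinely delicate; my plan would be to cite \cite{MR0041346,MR28600,MR34044} rather than reprove it. An appealing alternative route that avoids isoperimetry altogether is the Bakry--\'Emery approach: $S^{n-1}$ has Ricci curvature bounded below by $n-2$, hence carries a logarithmic Sobolev inequality with constant of order $1/n$, and Herbst's argument then yields the sub-Gaussian estimate with the sharp universal constant in a single line.
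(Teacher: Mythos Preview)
Your argument is correct and follows exactly the classical route the paper itself invokes: the paper does not give a proof of Theorem~\ref{thm:spherical} at all, but simply records it as a consequence of the L\'evy--Schmidt spherical isoperimetric inequality \cite{MR0041346,MR28600,MR34044} combined with Milman's concentration paradigm \cite{MR0293374}, which is precisely the isoperimetry $\Rightarrow$ median concentration $\Rightarrow$ mean concentration chain you sketch. Your proposal is thus a faithful expansion of what the paper leaves implicit, and the alternative Bakry--\'Emery route you mention is a legitimate bonus.
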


\subsection{Reverse H\"older inequalities for polynomials}
We will also need comparison of moments for polynomials in independent random variables. Historically results of this type (for positive moments) appeared first for Gaussian and Rademacher variables in the work by Nelson \cite{N} and Bonami \cite{Bon} in the context of hypercontractivity of Markov semigroups. Later they were extended to more general variables in particular in \cite{MR931501,MR1085342}. We refer to the monographs \cite{MR1167198,MR1666908} for a detailed account.

Apart from comparison of positive moments we will need a result allowing us to compare negative moments with the first moment for tetrahedral polynomials with nonnegative coefficients (recall that a multivariate polynomial is called tetrahedral if it is affine in each of the variables). We have not been able to find results in this spirit in the literature, we provide them in Lemma \ref{le:hypercontractivity-negative} and Corollary \ref{cor:moment-comparison-negative} below. We remark that certain reverse H\"older inequalities for negative moments of positive functions in the discrete setting have been considered in \cite{MR3061780}. Inequalities for general polynomials and log-concave measures were obtained in \cite{MR1839474}. For our purposes we need however stronger estimates in a simpler setting of independent, positive random variables.

Let $\kappa > \rho > 1$. We say that a random variable $X$ is $(\kappa,\rho)$ hypercontractive with constant $\gamma$ if for every $a,b \in \R$
\begin{align}\label{eq:hypercontractivity-def}
  \|a+b\gamma X\|_\kappa \le \|a+bX\|_\rho.
\end{align}

The following theorem is the real valued case of \cite[Proposition 3.2]{MR931501}. We remark that the original formulation is more general as it involves hypercontractivity for Hilbert space valued coefficients. Moreover it provides explicit formulas for the value of the constant $\gamma$, which we will omit, as it will not be needed in the sequel.

\begin{prop}\label{thm:hypercontractivity}
If $1<\rho\le 2\le \kappa$ and $X$ is a centered random variable such that $\|X\|_\kappa < \infty$, then $X$ is $(\kappa,\rho)$-hypercontractive.
\end{prop}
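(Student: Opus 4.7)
The plan is to establish \eqref{eq:hypercontractivity-def} via a local-to-global argument that identifies the admissible constant $\gamma$ from a Taylor expansion at infinity in the parameter $a$. By positive homogeneity of both sides of \eqref{eq:hypercontractivity-def} in $(a,b)$, we reduce to the case $b = 1$ and seek $\gamma = \gamma(X,\kappa,\rho) > 0$ such that
\[
G(a,\gamma) := \|a + \gamma X\|_\kappa - \|a + X\|_\rho \le 0 \quad \text{for every } a \in \R.
\]
Two observations structure the argument: first, the centering $\E X = 0$ combined with Jensen's inequality for the convex function $|\cdot|^\rho$ gives $\|a + X\|_\rho \ge |a|$ for every $a$; second, $\|a + \gamma X\|_\kappa \to |a|$ as $\gamma \to 0^+$, uniformly on compact subsets of $\R$.

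For $|a|$ in a bounded interval $[-R,R]$, continuity of $G(a,\gamma)$ together with the two observations above permits us to choose $\gamma = \gamma(R)$ small enough to ensure $G(a,\gamma) \le 0$ there. For large $|a|$, we expand, for $q \in \{\rho,\kappa\}$,
\[
\E|a + tX|^q = |a|^q + \frac{q(q-1)}{2}|a|^{q-2} t^2\, \E X^2 + r_q(a,t),
\]
with $\E X = 0$ annihilating the first-order term. Extracting $q$-th roots and comparing the expansions at $(q,t)=(\kappa,\gamma)$ and $(q,t)=(\rho,1)$ shows $G(a,\gamma)\le 0$ in the regime $|a| \ge R$ provided $(\kappa-1)\gamma^2 < \rho - 1$, which is the classical Bonami--Beckner threshold. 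Shrinking $\gamma > 0$ further, if necessary, to meet both the bounded-$a$ constraint and this asymptotic constraint simultaneously completes the proof.

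The hard part will be controlling the remainder $r_q(a,t)$ uniformly in $a$: the Taylor expansion of $|a+tX|^q$ converges absolutely only on the event $\{|tX| < |a|\}$, so the contribution from $\{|tX| \ge |a|\}$ must be estimated separately via a tail bound. This is the step where the integrability hypothesis $\|X\|_\kappa < \infty$ enters essentially, as one needs moments strictly above the target exponent $\rho$ to obtain a tail decay fast enough to absorb the discrepancy between the two expansions on a set of sufficiently small probability. An alternative approach, yielding sharper explicit constants, would pass through Bonami's two-point inequality for Rademacher variables followed by an extension to general centered $X$ via symmetrization and the central limit theorem (or tensorization); however, the analytic route sketched above delivers the required existence statement directly, which is all that is used in the sequel.
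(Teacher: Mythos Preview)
The paper does not prove this proposition; it simply quotes it as the real-valued case of \cite[Proposition 3.2]{MR931501} (Krakowiak--Szulga) and notes that the reference in fact gives explicit values of $\gamma$. There is therefore no in-paper argument to compare against. Your sketch supplies a self-contained analytic route and is sound in outline; it delivers exactly what the paper needs, namely existence of \emph{some} $\gamma>0$.

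Two points should be tightened before the argument is complete. First, for the bounded-$a$ step you need the \emph{strict} inequality $\|a+X\|_\rho>|a|$: Jensen alone gives only $\ge$, so you must dispose of the trivial case $X\equiv 0$ and then invoke strict convexity of $t\mapsto|t|^\rho$ for $\rho>1$ to obtain a uniformly positive gap $\inf_{|a|\le R}(\|a+X\|_\rho-|a|)>0$ on the compact interval. Second, when you ``shrink $\gamma$ further'' to reconcile the compact and asymptotic regimes you are implicitly using that $\gamma\mapsto\|a+\gamma X\|_\kappa$ is nondecreasing on $[0,\infty)$; this is true (the $\kappa$-th power is convex in $\gamma$ with vanishing derivative at $0$ since $\E X=0$) but should be stated, since otherwise the $R$ produced by the asymptotic step could in principle blow up as $\gamma\to 0$. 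The remainder estimate you flag as ``the hard part'' does go through under the stated hypothesis: splitting at $\{|tX|\le|a|/2\}$, the second-order Taylor remainder on the good event is dominated by $C_q X^2$, while the bad event contributes at most $C_q\E\big[|X|^q\ind{|tX|>|a|/2}\big]$, which is $o(|a|^{q-2})$ using only $\E X^2<\infty$ on the $\rho$-side and $\E|X|^\kappa<\infty$ on the $\kappa$-side. This yields $G(a,\gamma_0)=\tfrac{1}{2}\big((\kappa-1)\gamma_0^2-(\rho-1)\big)\E X^2\,|a|^{-1}+o(|a|^{-1})$, with the required sign once $(\kappa-1)\gamma_0^2<\rho-1$.
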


As is well known (see, e.g., \cite[Theorem 2.5]{MR931501} or \cite[formula (1.4)]{MR1085342}) hypercontractivity implies comparison of moments for tetrahedral polynomials.

\begin{prop}\label{thm:moment-comparison}
Let $1<\rho\le \kappa$. If $X_1,\ldots,X_n$ are independent random variables, $(\kappa,\rho)$-hypercontractive with the same constant $\gamma$, then for all tetrahedral polynomials $Q$ in $n$ variables
\begin{displaymath}
  \|Q(\gamma X_1,\ldots,\gamma X_n)\|_\kappa \le \|Q(X_1,\ldots,X_n)\|_\rho.
\end{displaymath}
\end{prop}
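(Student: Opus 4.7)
The plan is to prove the inequality by induction on $n$, exploiting the defining feature of tetrahedral polynomials. Since $Q$ is affine in $x_n$, I decompose $Q(x_1,\ldots,x_n) = A(x_1,\ldots,x_{n-1}) + x_n B(x_1,\ldots,x_{n-1})$ with $A, B$ tetrahedral in $n-1$ variables. The base case $n=0$ (constant polynomial) is trivial.

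For the inductive step, denote by $L^q_n$ the $L^q$-norm in $X_n$ alone (other variables held fixed) and by $L^q_{[n-1]}$ the joint $L^q$-norm in $(X_1,\ldots,X_{n-1})$. Conditioning on $(X_1,\ldots,X_{n-1})$ and applying the hypercontractivity \eqref{eq:hypercontractivity-def} of $X_n$ pointwise yields
\[
\bigl\|A(\gamma X_1,\ldots,\gamma X_{n-1}) + \gamma X_n B(\gamma X_1,\ldots,\gamma X_{n-1})\bigr\|_{L^\kappa_n} \le \bigl\|A(\gamma X_1,\ldots,\gamma X_{n-1}) + X_n B(\gamma X_1,\ldots,\gamma X_{n-1})\bigr\|_{L^\rho_n}.
\]
Taking the outer $L^\kappa$-norm in $(X_1,\ldots,X_{n-1})$ and then invoking Minkowski's integral inequality in the form $(\mathbb{E}_{[n-1]}(\mathbb{E}_n u)^{\kappa/\rho})^{\rho/\kappa} \le \mathbb{E}_n(\mathbb{E}_{[n-1]} u^{\kappa/\rho})^{\rho/\kappa}$ (valid because $\kappa/\rho \ge 1$, applied with $u = |\cdot|^\rho$) swaps the nested $L^\rho_n$ and $L^\kappa_{[n-1]}$ norms, giving
\[
\|Q(\gamma X_1,\ldots,\gamma X_n)\|_\kappa \le \bigl\|\,\|Q(\gamma X_1,\ldots,\gamma X_{n-1},X_n)\|_{L^\kappa_{[n-1]}}\,\bigr\|_{L^\rho_n}.
\]

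For each fixed realization of $X_n$ the map $(x_1,\ldots,x_{n-1}) \mapsto Q(x_1,\ldots,x_{n-1},X_n) = A(x_1,\ldots,x_{n-1}) + X_n B(x_1,\ldots,x_{n-1})$ is a deterministic tetrahedral polynomial in $n-1$ variables, so the inductive hypothesis applied conditionally on $X_n$ gives $\|Q(\gamma X_1,\ldots,\gamma X_{n-1},X_n)\|_{L^\kappa_{[n-1]}} \le \|Q(X_1,\ldots,X_{n-1},X_n)\|_{L^\rho_{[n-1]}}$. Taking the $L^\rho_n$-norm and invoking Fubini (both exponents are now $\rho$) yields $\|Q(X_1,\ldots,X_n)\|_\rho$, closing the induction.

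The only delicate point is the direction of the Minkowski interchange, which works precisely because $\rho \le \kappa$ allows trading the smaller inner exponent with the larger outer one; everything else is bookkeeping enabled by the fact that the class of tetrahedral polynomials is closed under fixing one variable, so the inductive hypothesis stays applicable at every step.
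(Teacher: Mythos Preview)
Your proof is correct and is precisely the standard inductive argument (tetrahedral decomposition plus the Minkowski interchange for mixed $L^\rho/L^\kappa$ norms) that underlies the references the paper cites; the paper itself does not give a proof but simply invokes \cite[Theorem 2.5]{MR931501} and \cite[formula (1.4)]{MR1085342}, whose proofs follow the same scheme you wrote out.
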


\begin{cor}\label{cor:moment-comparison}
Let $\kappa \ge 2$ and let $X_1,\ldots,X_n$ be i.i.d. copies of a random variable $X$ such that $\E |X|^\kappa < \infty$. Then for every $\rho > 0$, there exists a constant $K$ depending only on $d,\kappa,\rho$ and the law of $X$ such that for every tetrahedral polynomial of degree $d$ in $n$ variables,
\begin{displaymath}
  \|Q(X_1,\ldots,X_n)\|_\kappa \le K\|Q(X_1,\ldots,X_n)\|_\rho.
\end{displaymath}
\end{cor}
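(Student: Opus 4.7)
My plan is to reduce the statement to the special case $\rho=2$ using hypercontractivity together with $L^2$-orthogonality of the homogeneous chaos components, and then extend to arbitrary $\rho>0$ via log-convexity of $L^p$-norms.

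First, I would observe that it suffices to prove the claim under the additional assumption that the $X_i$ are centered: setting $Y_i := X_i - \E X$, the polynomial $\tilde Q(y_1,\ldots,y_n) := Q(y_1+\E X, \ldots, y_n+\E X)$ is still tetrahedral of degree at most $d$, satisfies $Q(X_1,\ldots,X_n)=\tilde Q(Y_1,\ldots,Y_n)$, and the $Y_i$ are i.i.d., centered, with $\E|Y|^\kappa<\infty$. So from now on $X$ is centered.

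The main step is the bound $\|Q\|_\kappa \le C\|Q\|_2$ with $C$ depending only on $d,\kappa$ and $\mathcal{L}(X)$. Decompose $Q=\sum_{k=0}^d Q_k$ into homogeneous parts; each $Q_k$ is again tetrahedral. Because $Q$ is tetrahedral and the $X_i$ are centered and independent, for $|S|=k$, $|S'|=k'$ the expectation $\E[\prod_{i\in S} X_i\prod_{j\in S'}X_j]$ factors as $(\E X^2)^{|S\cap S'|}\prod_{i\in S\triangle S'}\E X_i$, which vanishes unless $S=S'$. Consequently the $Q_k$'s are pairwise orthogonal in $L^2$ and $\|Q_k\|_2\le\|Q\|_2$ for each $k$. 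Proposition~\ref{thm:hypercontractivity} supplies a constant $\gamma=\gamma(\kappa,\mathcal{L}(X))\in(0,1)$ making $X$ $(\kappa,2)$-hypercontractive, and then Proposition~\ref{thm:moment-comparison} applied to the tetrahedral $Q_k$ gives $\gamma^k\|Q_k\|_\kappa=\|Q_k(\gamma X_1,\ldots,\gamma X_n)\|_\kappa\le\|Q_k\|_2\le\|Q\|_2$. Summing over $k\le d$ yields $\|Q\|_\kappa\le(d+1)\gamma^{-d}\|Q\|_2$, as desired.

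To pass from $\rho=2$ to a general $\rho>0$ I would split into three cases. If $\rho\ge 2$ the monotonicity $\|Q\|_\rho\ge\|Q\|_2$ finishes it. If $\rho\in(1,2)$, running the same orthogonality argument but with Proposition~\ref{thm:hypercontractivity} applied with the pair $(2,\rho)$ in place of $(\kappa,2)$ gives $\|Q\|_2\le C_\rho\|Q\|_\rho$, and composition with the previous step concludes. If $\rho\in(0,1]$, I would fix an auxiliary exponent $\rho_1\in(1,2)$ and use log-convexity of $L^p$-norms in the form $\|Q\|_{\rho_1}\le\|Q\|_\rho^{\theta}\|Q\|_2^{1-\theta}$, with $\theta\in(0,1)$ determined by $1/\rho_1=\theta/\rho+(1-\theta)/2$, then combine with the bound $\|Q\|_2\le C\|Q\|_{\rho_1}$ from the previous case and solve for $\|Q\|_2$ to reach $\|Q\|_2\le C'\|Q\|_\rho$, and hence $\|Q\|_\kappa\le CC'\|Q\|_\rho$. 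The substantive point is the $L^2$-orthogonality step: the centered, tetrahedral structure is what forces distinct-degree components to be orthogonal, and hence what produces the dimension-free constant; without either hypothesis the cross-terms between $Q_k$ and $Q_{k'}$ would accumulate with $n$ and the whole approach would collapse.
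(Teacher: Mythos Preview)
Your overall strategy is sound and in fact more self-contained than the paper's, but the step for $\rho\in(1,2)$ does not go through as written. In the main step $\rho=2$ you used orthogonality to get $\|Q_k\|_2\le\|Q\|_2$, and then hypercontractivity gave $\|Q_k\|_\kappa\le\gamma^{-k}\|Q_k\|_2$; summing over $k$ finished it. When you swap to the pair $(2,\rho)$, hypercontractivity yields $\|Q_k\|_2\le\gamma^{-k}\|Q_k\|_\rho$, and orthogonality still gives $\|Q\|_2^2=\sum_k\|Q_k\|_2^2$. Combining these bounds $\|Q\|_2$ by a constant times $\max_k\|Q_k\|_\rho$ or $\sum_k\|Q_k\|_\rho$, but \emph{not} by $\|Q\|_\rho$: there is no orthogonality in $L^\rho$ for $\rho\neq 2$, so you have no way to conclude $\|Q_k\|_\rho\le C\|Q\|_\rho$. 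This is precisely the nontrivial ``projection contraction'' statement for which the paper invokes an external reference (\cite[Proposition 1.2]{MR3052405}).

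The fix is easy and already implicit in your last case: drop the separate treatment of $\rho\in(1,2)$ entirely and run the log-convexity argument directly for every $\rho\in(0,2)$, interpolating at the exponent $2$ between $\rho$ and some fixed $\kappa_0>2$. That is, write $\|Q\|_2\le\|Q\|_\rho^{\theta}\|Q\|_{\kappa_0}^{1-\theta}$ with $1/2=\theta/\rho+(1-\theta)/\kappa_0$, feed in the main-step bound $\|Q\|_{\kappa_0}\le C\|Q\|_2$, and solve for $\|Q\|_2$. This covers all $\rho<2$ in one stroke. With this correction your argument is complete and, compared with the paper's, has the advantage of replacing the cited contraction lemma by the elementary $L^2$-orthogonality of the homogeneous components; the paper instead goes straight from $\kappa$ to $\rho\in(1,2)$ via hypercontractivity and then appeals to \cite{MR3052405} to recombine the pieces.
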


\begin{proof}
Since $Q(X_1,\ldots,X_n)$ can be written as a tetrahedral polynomial in the variables $X_1-\E X,\ldots,X_n - \E X$ we may and will assume that $X$ is centered. Moreover we can assume that $q \in (1,2)$. Indeed, let $Z$ be a random variable such that for some $\kappa > \rho>0$ and a constant $D$,
\begin{displaymath}
  \|Z\|_\kappa \le D\|Z\|_\rho,
\end{displaymath}
and let $\rho' \in (0,\rho)$. Then, by H\"older's inequality applied with exponents $t, t/(t-1)$ for $t = (\kappa-\rho')/(\kappa- \rho)>1$, we get
for $r = \rho'/(\rho t) < 1$,
\begin{multline*}
  \E Z^\rho = \E Z^{\rho r}Z^{\rho(1-r)} \le (\E Z^{\rho r t})^{1/t}(\E Z^{\rho (1-r)t/(t-1)})^{1 - 1/t} = (\E Z^{\rho'})^{1/t} (\E Z^\kappa)^{1-1/t}\\
  \le  (\E Z^{\rho'})^{1/t} D^{\kappa(1-1/t)}(\E Z^\rho)^{\kappa (1-1/t)/\rho} = (\E Z^{\rho'})^{1/t} D^{\rho(1-r)} (\E Z^\rho)^{1-r},
\end{multline*}
from which one obtains $\|Z\|_\rho \le D^{1/r-1}\|Z\|_{\rho'}$ and as a consequence $\|Z\|_\kappa \le D^{1/r}\|Z\|_{\rho'}$. In what follows we will thus assume that $q \in (1,2)$.

Write $Q = Q_0+\ldots+Q_d$, where $Q_i$ is the homogeneous part of $Q$ of degree $i$. By Propositions \ref{thm:hypercontractivity} and \ref{thm:moment-comparison}
\begin{displaymath}
  \|Q(X_1,\ldots,X_n)\|_\kappa \le \sum_{i=0}^d \|Q_i(X_1,\ldots,X_n)\|_\kappa \le K\sum_{i=0}^d \|Q_i(X_1,\ldots,X_n)\|_\rho
\end{displaymath}
for some constant $K$ as in the statement of the corollary.

By  \cite[Proposition 1.2]{MR3052405} (see also \cite[Lemma 2]{MR893914}) the right hand side above is bounded from above by $C_d K\|Q(X_1,\ldots,X_n)\|_\rho$, which ends the proof.
\end{proof}

Above we have considered only tetrahedral polynomials, however in the Gaussian case, thanks to infinite divisibility, each polynomial can be approximated by a tetrahedral one. Hypercontractivity of Gaussian variables in the sense of \eqref{eq:hypercontractivity-def} goes back to the seminal work of Nelson, Bonami and Gross related to hypercontractivity for Markov semigroups. As a consequence of their results we have the following theorem (we remark that explicit constants are known, but as they are not needed for our proofs we state the result in a simplified version).

\begin{theorem}\label{thm:Gaussian-hypercontractivity}
Let $G$ be a standard Gaussian vector in $\R^n$. For each $\kappa,\rho > 0$ and every polynomial $Q\colon \R^n\to \R$ of degree at most $d$,
\begin{displaymath}
  \|Q(G)\|_\kappa \le C_{\kappa,\rho,d}\|Q(G)\|_{\rho}.
\end{displaymath}
\end{theorem}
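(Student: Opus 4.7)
The plan is to deduce the Gaussian hypercontractivity from the Rademacher one already covered by Corollary \ref{cor:moment-comparison}, through a CLT-type approximation. First, note that the case $\kappa \le \rho$ is immediate from Jensen's inequality ($\|X\|_\kappa \le \|X\|_\rho$ for any random variable with $\|X\|_\rho < \infty$). If $\kappa < 2$, one first applies the same $\|X\|_\kappa \le \|X\|_2$ bound and reduces to the case $\kappa \ge 2$. So it suffices to treat $\kappa \ge 2$ and $\rho \in (0,\kappa)$.

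Fix $n$, $d$, and a polynomial $Q$ of degree at most $d$. For every $N \ge 1$ let $\{\varepsilon_{j,i}\}_{j \in [n],\, i \in [N]}$ be i.i.d.\ Rademacher variables, and define
\[
G_j^{(N)} := \frac{1}{\sqrt{N}}\sum_{i=1}^N \varepsilon_{j,i}, \qquad G^{(N)} := (G_1^{(N)}, \ldots, G_n^{(N)}).
\]
By the multivariate Central Limit Theorem, $G^{(N)} \to G$ in distribution as $N \to \infty$. The polynomial $Q(G^{(N)})$ is, after substitution and expansion, a polynomial of total degree at most $d$ in the $nN$ Rademacher variables $\varepsilon_{j,i}$. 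Using the identity $\varepsilon_{j,i}^2 = 1$ we can replace every higher power $\varepsilon_{j,i}^k$ by $\varepsilon_{j,i}$ or $1$ (depending on the parity of $k$), obtaining a tetrahedral polynomial $P_N$ in $nN$ independent Rademachers, of degree at most $d$, such that $Q(G^{(N)}) = P_N(\varepsilon)$ almost surely. (Crucially, this reduction never increases the degree.)

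Rademacher variables are centered, bounded, and possess moments of all orders, so Corollary \ref{cor:moment-comparison} yields a constant $K = K(d, \kappa, \rho)$, independent of $N$, such that
\[
\|Q(G^{(N)})\|_\kappa = \|P_N(\varepsilon)\|_\kappa \le K\,\|P_N(\varepsilon)\|_\rho = K\,\|Q(G^{(N)})\|_\rho.
\]
It remains to pass to the limit $N \to \infty$. By the continuous mapping theorem, $Q(G^{(N)}) \to Q(G)$ in distribution. Khintchine's inequality gives $\sup_N \E|G_j^{(N)}|^{q} < \infty$ for every $q > 0$, so $\sup_N \E|Q(G^{(N)})|^{q} < \infty$ for every $q$. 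This uniform integrability upgrades weak convergence to convergence of the $\rho$-th and $\kappa$-th absolute moments, and passing to the limit in the displayed inequality yields $\|Q(G)\|_\kappa \le K\,\|Q(G)\|_\rho$, as required.

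I expect the main conceptual point, rather than an obstacle, to be the tetrahedralization step: one must observe that rewriting $Q(G^{(N)})$ as a polynomial in $\{\varepsilon_{j,i}\}$ keeps the total degree bounded by $d$, so that Corollary \ref{cor:moment-comparison} can be applied \emph{uniformly in $N$} with a constant depending only on $d$, $\kappa$, $\rho$ (and the Rademacher law). Once this is in place, the rest is routine CLT-with-moments bookkeeping.
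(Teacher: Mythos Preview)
Your argument is correct. The paper, however, does not actually prove this theorem: it is stated as a classical consequence of the hypercontractivity of the Ornstein--Uhlenbeck semigroup due to Nelson, Bonami and Gross, and no proof is given. The paragraph preceding the statement does allude to the idea you implement --- ``in the Gaussian case, thanks to infinite divisibility, each polynomial can be approximated by a tetrahedral one'' --- but this is offered as a side remark rather than a proof.

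Your proof is thus a genuine, self-contained derivation from Corollary~\ref{cor:moment-comparison}, and it realises the infinite-divisibility remark in a clean way: instead of writing each Gaussian coordinate as a sum of $N$ i.i.d.\ Gaussians (which would still leave non-tetrahedral terms to be shown negligible as $N\to\infty$), you use normalized Rademacher sums, so that the identity $\varepsilon^2=1$ makes the tetrahedralisation exact at every finite $N$. The constant in Corollary~\ref{cor:moment-comparison} depends only on $d,\kappa,\rho$ and the (fixed) Rademacher law, hence is uniform in $N$; the passage to the limit via the CLT with uniform moment bounds is routine. Compared to quoting the semigroup result, your approach has the advantage of staying entirely within the elementary framework of Section~\ref{sec:tools}, at the cost of not recovering the sharp constants that the Nelson--Bonami--Gross theory provides (which, as the paper notes, are not needed here anyway).
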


So far we have dealt only with positive moments, however a crucial step in the proof of Theorem \ref{thm:CLT} will be based on comparison of the first moment with negative moments for positive polynomials in independent random variables. Its proof relies on a similar idea as presented above.

\begin{lemma}\label{le:hypercontractivity-negative} Let $X$ be a positive random variable such that $\E X < \infty$ and $\E X^{-\rho} < \infty$ for some $\rho > 0$. Then for any $\kappa \in (0,\rho]$ and any $a \ge 0$,  $b \ge 0$,
\begin{displaymath}
  \E \frac{1}{(a+b X)^\kappa} \le \frac{1}{(a + \sigma b \E X)^\kappa},
\end{displaymath}
where $\sigma = \|X\|_{-\rho}/{\|X\|_1}$.
\end{lemma}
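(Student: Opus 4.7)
The plan is to bound $\E(a+bX)^{-\kappa}$ by chaining two estimates. First, pass from the $-\kappa$ moment to the $-\rho$ moment via $L_p$-norm monotonicity applied to the positive random variable $Y := (a+bX)^{-1}$: since $0 < \kappa \le \rho$, one has $\|Y\|_\kappa \le \|Y\|_\rho$, which rewrites as
\begin{displaymath}
  \E(a+bX)^{-\kappa} \le \bigl(\E(a+bX)^{-\rho}\bigr)^{\kappa/\rho}.
\end{displaymath}
Second, bound the $-\rho$ moment by $\E(a+bX)^{-\rho} \le (a + b\|X\|_{-\rho})^{-\rho}$. Combining and using the definitional identity $\sigma\E X = \|X\|_{-\rho}$ gives exactly the claimed inequality $\E(a+bX)^{-\kappa} \le (a + \sigma b\E X)^{-\kappa}$.

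The second estimate is equivalent to the reverse triangle inequality $\|a + bX\|_{-\rho} \ge a + b\|X\|_{-\rho}$ for the $-\rho$-quasi-norm. I would deduce it as a special case of the classical reverse Minkowski inequality: for positive random variables $U, V$ with finite $-\rho$-moments,
\begin{displaymath}
  \|U + V\|_{-\rho} \ge \|U\|_{-\rho} + \|V\|_{-\rho},
\end{displaymath}
applied with $U = a$ (viewed as a constant random variable, for which $\|a\|_{-\rho} = a$) and $V = bX$. The boundary cases $a = 0$ or $b = 0$ fall out automatically (the $a = 0$ case collapsing to mere $L_p$-norm monotonicity applied to $X^{-1}$).

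The main technical point is thus the reverse Minkowski inequality itself. I would derive it from the reverse H\"older inequality with conjugate exponents $p = -\rho$ and $q = p/(p-1) = \rho/(\rho+1) \in (0,1)$, which asserts $\E fg \ge \|f\|_p\|g\|_q$ for positive $f,g$. Decomposing
\begin{displaymath}
  \E(U+V)^p = \E U(U+V)^{p-1} + \E V(U+V)^{p-1}
\end{displaymath}
and applying reverse H\"older to each summand with second factor $(U+V)^{p-1}$ --- using the key identity $(p-1)q = p$ to recognize $\|(U+V)^{p-1}\|_q = (\E(U+V)^p)^{1/q}$ --- then summing and cancelling a common factor gives $(\E(U+V)^p)^{1-1/q} \ge \|U\|_p + \|V\|_p$. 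Since $1 - 1/q = 1/p < 0$, raising to the power $p$ flips the inequality into the desired reverse Minkowski form. The only care required throughout is bookkeeping of sign flips arising from negative exponents; no deeper obstacle is anticipated, and once the reverse Minkowski is in hand, assembling the two-step chain is routine.
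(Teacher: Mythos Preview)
Your proof is correct and takes a genuinely different route from the paper. The paper proceeds by a single Jensen step: it checks directly that the function $\varphi(z) = (1+z^{-1/\rho})^{-\kappa}$ is concave on $(0,\infty)$ whenever $\kappa \le \rho$ (by computing $\varphi''$), and then applies Jensen's inequality to $Z = (a/b)^\rho X^{-\rho}$. Your argument instead factors the estimate into two classical pieces: $L_p$-norm monotonicity $\|Y\|_\kappa \le \|Y\|_\rho$ isolates the role of the hypothesis $\kappa \le \rho$, and the reverse Minkowski inequality $\|a+bX\|_{-\rho} \ge a + b\|X\|_{-\rho}$ finishes the job. The paper's approach is a one-shot application once one has guessed the right concave function; yours is more modular, uses only textbook inequalities, and makes the dependence on $\kappa \le \rho$ transparent. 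One minor expository slip: after you reach $(\E(U+V)^p)^{1-1/q} \ge \|U\|_p + \|V\|_p$ with $1-1/q = 1/p$, the left-hand side is already $\|U+V\|_p$, so no further ``raising to the power $p$'' or inequality flip is needed.
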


\begin{proof}
We may assume without loss of generality that $a,b > 0$. Consider the function $\varphi\colon \R_+ \to \R_+$ given by
\begin{displaymath}
  \varphi(z) = \frac{1}{(1 + z^{-1/\rho})^\kappa}.
\end{displaymath}
We have
\begin{displaymath}
  \varphi''(z) = -\frac{\kappa (z^{-1/\rho} + 1)^{-\kappa} (-\kappa + \rho z^{1/\rho} + z^{1/\rho} + \rho)}{\rho^2 z^2 (z^{1/\rho} + 1)^2} \le 0
\end{displaymath}
for $z > 0$. Thus $\varphi''$ is concave on $\R_+$. Setting $Z = X^{-\rho}$, by Jensen's inequality we get
\begin{multline*}
  \E \frac{1}{(a+b X)^\kappa} = \frac{1}{a^\kappa} \E \frac{1}{( 1 + ((a/b)^{\rho}Z)^{-1/\rho})^\kappa} \\
  = \frac{1}{a^\kappa} \E \varphi((a/b)^{\rho} Z) \le \frac{1}{a^\kappa} \varphi((a/b)^{\rho} \E Z) =  \frac{1}{(a + b \|X\|_{-\rho})^\kappa} = \frac{1}{(a + \sigma b \E X)^\kappa}.
\end{multline*}
\end{proof}

\begin{cor}\label{cor:moment-comparison-negative}
In the setting of Lemma \ref{le:hypercontractivity-negative}, let $X_1,\ldots,X_n$ be i.i.d. copies of the random variable $X$ and let $(a_{i_1,\ldots,i_d})_{i_1,\ldots,i_d \in [n]}$ be an array of nonnegative numbers such that $a_{i_1,\ldots,i_d} = 0$ whenever there exists $l\neq m$ such that $i_l = i_m$. Let $Y = \sum_{i_1,\ldots,i_d = 1}^n a_{i_1,\ldots,i_d} X_{i_1}\cdots X_{i_d}$. Then
\begin{displaymath}
  \|Y\|_1 \le \sigma^{-d} \|Y\|_{-\kappa}.
\end{displaymath}
\end{cor}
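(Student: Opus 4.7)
The plan is to iterate Lemma~\ref{le:hypercontractivity-negative} once for each of the variables $X_1,\ldots,X_n$, peeling them off one at a time and replacing them by the deterministic surrogate $\sigma \E X$. The key structural input is that $Y$ is tetrahedral with nonnegative coefficients, so for every fixed $i \in [n]$ one can write
\[
Y = A_i + B_i X_i,
\]
where $A_i, B_i \ge 0$ are tetrahedral polynomials in $\{X_j\}_{j\neq i}$, again with nonnegative coefficients ($B_i$ collects those terms of $Y$ in which $X_i$ appears, with $X_i$ factored out, and $A_i$ collects the rest). In particular $A_i \ge 0$ and $B_i \ge 0$ almost surely, which is exactly the regime in which Lemma~\ref{le:hypercontractivity-negative} applies.

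Conditioning on $\{X_j\}_{j\neq 1}$ and invoking the lemma with $a = A_1$, $b = B_1$, and $\kappa \le \rho$ gives
\[
\E_{X_1}\bigl[(A_1 + B_1 X_1)^{-\kappa}\bigr] \le \bigl(A_1 + \sigma B_1\, \E X\bigr)^{-\kappa}.
\]
Taking outer expectation, the right-hand side is the value one gets by substituting the nonnegative constant $\sigma \E X$ for $X_1$ in $Y$, so it is again a tetrahedral polynomial with nonnegative coefficients in the remaining $n-1$ variables. Iterating the same step for $i = 2, 3, \ldots, n$ (at each stage conditioning on the not-yet-processed variables) eventually replaces every $X_i$ by $\sigma \E X$, yielding the deterministic bound
\[
\E Y^{-\kappa} \le \Bigl(\sum_{i_1,\ldots,i_d} a_{i_1,\ldots,i_d}\,(\sigma \E X)^d\Bigr)^{-\kappa} = (\sigma^d \E Y)^{-\kappa},
\]
where the last equality uses that the coefficients $a_{i_1,\ldots,i_d}$ vanish on non-injective tuples, so the contributing $X_{i_j}$ are independent and $\E(X_{i_1}\cdots X_{i_d}) = (\E X)^d$. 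Raising to the power $-1/\kappa$ produces $\|Y\|_{-\kappa} \ge \sigma^d \|Y\|_1$, i.e. the claim.

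The only bookkeeping to check is that the class of tetrahedral polynomials with nonnegative coefficients in the remaining variables is preserved when one freezes a single variable at a nonnegative constant; this is immediate from the definition. Consequently I do not anticipate any real obstacle: once Lemma~\ref{le:hypercontractivity-negative} is available, the corollary is a one-line induction on the number of active variables, and the product structure of $\E Y$ (via disjointness of indices) is what converts the $n$-fold substitution into the clean factor $\sigma^d$.
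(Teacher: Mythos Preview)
Your proof is correct and follows essentially the same approach as the paper: both arguments split off one variable at a time using the tetrahedral-with-nonnegative-coefficients structure, apply Lemma~\ref{le:hypercontractivity-negative} conditionally, and iterate (the paper phrases this as a formal induction on $n$ for general tetrahedral polynomials and then invokes homogeneity, whereas you iterate directly on the specific $Y$ and read off the factor $\sigma^d$ from the disjoint-index structure). The substance is identical.
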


\begin{proof}
We will prove by induction on $n$ that for any tetrahedral polynomial $Q$ in $n$ variables, with nonnegative coefficients,
\begin{align}\label{eq:induction-rev-hyp}
  \E \frac{1}{Q(X_1,\ldots,X_n)^\kappa} \le \frac{1}{(\E Q(\sigma X_1,\ldots,\sigma X_n))^\kappa},
\end{align}
from which the corollary follows by homogeneity of the $d$-linear form.

For $n=0$, \eqref{eq:induction-rev-hyp} is trivial, while for $n=1$ it reduces to the assertion of lemma \ref{le:hypercontractivity-negative}. Assume thus that \eqref{eq:induction-rev-hyp} holds for $0,1,\ldots,n-1$. Any tetrahedral polynomial in $n$ variables with nonnegative coefficients is of the form $Q(x_1,\ldots,x_n) = x_n Q_1(x_1,\ldots,x_{n-1}) + Q_2(x_1,\ldots,x_{n-1})$ where $Q_1,Q_2$ are tetrahedral and also have nonnegative coefficients. Thus, by Lemma \ref{le:hypercontractivity-negative} applied conditionally on $X_1,\ldots,X_{n-1}$ and the induction assumption we obtain
\begin{eqnarray*}
\E \frac{1}{Q(X_1,\ldots,X_n)^\kappa} &=& \E_{X_1,\ldots,X_{n-1}} \E_{X_n} \frac{1}{(X_n Q_1(X_1,\ldots,X_{n-1}) + Q_2(X_1,\ldots,X_{n-1}))^\kappa}\\
&\le& \E_{X_1,\ldots,X_{n-1}} \frac{1}{((\E_{X_n} \sigma X_n) Q_1(X_1,\ldots,X_{n-1}) + Q_2(X_1,\ldots,X_{n-1}))^\kappa}\\
&\le &\frac{1}{\Big(\E_{X_1,\ldots,X_{n-1}} ((\E_{X_n} \sigma X_n) Q_1(\sigma X_1,\ldots,\sigma X_{n-1}) + Q_2(\sigma X_1,\ldots,\sigma X_{n-1})\Big)^\kappa}\\
&=&\frac{1}{(\E Q(\sigma X_1,\ldots,\sigma X_n))^\kappa},
\end{eqnarray*}
where we used the fact that $(\sigma \E_{X_n}X_n) Q_1(x_1,\ldots,x_{n-1}) + Q_2(x_1,\ldots,x_{n-1})$ is tetrahedral, which allows us to use the induction assumption.
\end{proof}

\subsection{$U$-statistics}\label{sec:U-statistics}

Let $X,X_1,X_2,\ldots$ be a sequence of i.i.d. random variables with values in some measurable space $(\mathcal{S},\mathcal{F})$ and let $h\colon \mathcal{S}^d\to \R$ be a measurable function. The $U$-statistics of order $d$ and kernel $h$ based on the sequence $\{X_i\}_{i\ge 1}$ are random variables defined as
\begin{displaymath}
  U_n(h) = U_n^{(d)}(h) = \frac{(n-d)!}{n!} \sum_{\ii \in \rng} h(X_{i_1},\ldots,X_{i_d}),\; n \ge d.
\end{displaymath}
Define also the unnormalized sum
\begin{displaymath}
  S_n(h) = S_n^{(d)}= \sum_{\ii \in \rng} h(X_{i_1},\ldots,X_{i_d}) = \frac{n!}{(n-d)!} U_n(h).
\end{displaymath}
We will also use the above definitions for $d=0$ interpreting $\mathcal{S}^0$ as a one element set and identifying  functions from $\mathcal{S}^0$ into $\R$ with constants. Thus according to this convention for any constant $h$, $S_n^{(0)}(h) = U_n^{(0)}(h) = h$.

Going back to general $d$, note that by modifying the kernel $h$ we may assume without loss of generality that it is invariant under permutation of the arguments. In the subsequent part we will work under this assumption.

$U$-statistics appeared for the first time in the 1940s in the work by Halmos \cite{MR15746} and Hoeffding \cite{MR26294} in the context of unbiased estimation. Since then they have found applications, e.g., as higher order terms in Taylor expansions of smooth statistics \cite{MR1666908}, or in random graph theory \cite{MR1474726}. In a geometric context they were used in \cite{MR1092403} to prove a CLT for volumes of Minkowski sums of random convex sets (see also the monograph \cite{MR3751326}) and in \cite{MR3230006} in the proof of the CLT for volumes of low dimensional random projections of the cube. The reason behind the appearance of $U$-statistics in these references and our proof are quite different. In our case they show up as some conditional variances in the one dimensional Taylor expansion of functions appearing in the volume formula of Theorem \ref{thm:N-T}, whereas in the cited references they are directly related to mixed volumes and the zonotope volume formula. Yet another line of research involving a somewhat related but different notion of Poisson $U$-statistics has been pursued, e.g., in \cite{MR3161465,MR3215537}, where the authors among other examples  consider applications to intrinsic volumes of sets arising from the Poisson flat process. These results are of a different flavour than ours, they are related to stochastic analysis on the Poisson space and concern asymptotic behaviour of random sets in a fixed dimension, when the intensity of the underlying Poisson process tends to infinity.

Below we describe basic properties of $U$-statistics, referring for a more detailed description of their asymptotic theory to the monographs \cite{MR1474726, MR1666908,MR1075417} .

The kernel $h$ (and the corresponding $U$-statistic) is called canonical if $\E h(X,x_2,\ldots,x_d) = 0$ for all $x_2,\ldots,x_d \in \mathcal{S}$. One of the basic tools in the theory of $U$-statistics is the Hoeffding decomposition, which allows to represent any $U$-statistic based on a square integrable kernel of mean zero as a sum of uncorrelated canonical ones. Let us now briefly describe how it works. We will use the notation $\mu_1\otimes\cdots \otimes \mu_d f = \int_{\mathcal{S}^n} f(x_1,\ldots,x_d)\mu_1(dx_1)\cdots \mu_d(x_d)$, where $\mu_1,\ldots,\mu_d$ are signed measures on $\mathcal{S}$. For $k=0,\ldots,d$ we define the $k$-th Hoeffding projection of $h$ as
\begin{displaymath}
  \pi_k h (x_1,\ldots,x_k) = (\delta_{x_1} - P)\otimes \cdots\otimes (\delta_{x_k} - P)\otimes P^{\otimes (d-k)} h,
\end{displaymath}
where $P$ is the law of $X$ and $\delta_x$ stands for the Dirac mass at $x$.
For instance
\begin{align*}
\pi_0 h &= \E h(X_1,\ldots,X_d),\\
\pi_1 h(x_1) & = \E h(x_1,X_2,\ldots,X_d) - \E h(X_1,\ldots,X_d),\\
\pi_2 h(x_1,x_2) &= \E h(x_1,x_2,X_3,\ldots,X_d) - \E h(x_1,X_2,\ldots,X_d) - \E  h(x_2,X_2,\ldots,X_d) + \E h(X_1,\ldots,X_d).
\end{align*}
One checks that for $k > 0$ the kernel $\pi_k h$ is canonical. Moreover
\begin{displaymath}
  U_n^{(d)}(h) = \sum_{k=0}^d \binom{d}{k} U_n^{(k)}(\pi_k h)
\end{displaymath}
or in terms of $S_n^{(k)}$,
\begin{align}\label{eq:Hoeffding-Sn}
  S_n^{(d)}(h) = \sum_{k=0}^d \binom{d}{k} \frac{(n-k)!}{(n-d)!} S_n^{(k)}(\pi_k h).
\end{align}

Canonical $U$-statistics can be thus considered building blocks for the general case.  Their advantage stems from the following elementary estimate for their second moment. If $h$ is canonical then
\begin{displaymath}
    \E |S_n^{(d)}(h)|^2 = \frac{n!d!}{(n-d)!} \E h(X_1,\ldots,X_d)^2.
\end{displaymath}
Indeed the left-hand side above equals
\begin{displaymath}
  \sum_{\ii\in \rng} \sum_{\jj \in \rng} \E h(X_{i_1},\ldots,X_{i_d}) h(X_{j_1},\ldots,X_{j_d})
\end{displaymath}
and using the canonicity and symmetry one can easily see that the expectations do not vanish only if $\ii$ and $\jj$ differ just by a permutation of coordinates, in which case they  equal $\E h^2(X_1,\ldots,X_d)$.
In particular, since $\pi_k h$ is a canonical kernel of order $k$, we obtain that for $k > 0$,
\begin{align}\label{eq:canonical-second-moment}
  \E |S_n^{(k)}(\pi_k h)|^2 = \frac{n!k!}{(n-k)!} \E h(X_1,\ldots,X_k)^2.
\end{align}

It is also straightforward to check that $S_n^{(k)}(\pi_k h)$, $k=0,\ldots,n$ are uncorrelated. In particular \eqref{eq:Hoeffding-Sn} for $n=d$ together with \eqref{eq:canonical-second-moment} give
\begin{align}\label{eq:Hoeffding-contraction property}
\E h(X_1,\ldots,X_d)^2 = \sum_{k=0}^d \binom{d}{k} \E (\pi_k h(X_1,\ldots,X_k))^2.
\end{align}

\subsection{Hermite polynomials}
Recall that Hermite polynomials are orthogonal polynomials with respect to the standard Gaussian measure. There are many conventions concerning their normalization, we will use the probabilistic one and define for $n\in \N$,
\begin{displaymath}
  H_n(x) = (-1)^n e^{x^2/2} \frac{d^n}{dt^n} e^{-x^2/2}.
\end{displaymath}

Thus in particular
\begin{align}\label{eq:first-Hermites}
H_0(x) & = 1,\nonumber\\
H_1(x) & = x,\nonumber\\
H_2(x) & = x^2 - 1,\nonumber\\
H_3(x) & = x^3 - 3x,\nonumber\\
H_4(x) & = x^4 - 6x^2 + 3.
\end{align}

We refer, e.g., to the monograph \cite{MR1474726} for the presentation of various probabilistic properties and applications of Hermite polynomials. In our setting only low degree polynomials will appear, in two different settings, to simplify calculation of expectations of Gaussian polynomials and as ingredients in the Edgeworth expansion.

\subsection{Cumulants}
Let us now briefly recall the basic properties of cumulants of random variables.

Let $X$ be a real valued random variable. For $n\ge 1$ the cumulants $\ka_n = \ka_n(X)$ are defined as
\begin{displaymath}
  \ka_n = \i^{-n}\frac{d^n}{dt^n}\log \E e^{\i t X}\Big|_{t=0}.
\end{displaymath}
Clearly $\ka_n$ are well defined if $X$ has all moments. Moreover we have the identity
\begin{align}\label{eq:cumulants-moments}
  \ka_n = \sum_{{1\cdot r_1+\cdots+n\cdot r_n = n }\atop{r_1,\ldots,r_n\in \N}}\frac{(-1)^{r_1+\cdots+r_n-1}(r_1+\cdots+r_n-1)!n!}{(1!)^{r_1}(2!)^{r_2}\cdots (n!)^{r_n}r_1!\cdots r_n!}(\E X)^{r_1}(\E X^2)^{r_2}\cdots (\E X^n)^{r_n}.
\end{align}
In particular
\begin{align}\label{eq:low-cumulants}
\ka_1 & = \E X,\nonumber \\
\ka_2 & = \E X^2 - (\E X)^2 = \Var(X),\nonumber\\
\ka_3 & = 2(\E X)^3 - 3\E X \E X^2 + \E X^3 = \E(X - \E X)^3,\nonumber\\
\ka_4 & = \E (X-\E X)^4 - 3 (\E(X-\E X)^2)^2.
\end{align}

The other important properties of cumulants are
\begin{displaymath}
  \ka_n(X+t) = \ka_n(X),\; \ka_n(tX) = t^n \ka_n(X)
\end{displaymath}
for $t \in \R$ and
\begin{displaymath}
  \ka_n(X_1+\cdots+X_m) = \ka_n(X_1)+\cdots+\ka_n(X_m)
\end{displaymath}
if $X_1,\ldots,X_m$ are independent random variables.

It is also easy to see that odd cumulants of a symmetric random variable vanish.

\subsection{Edgeworth expansion}\label{sec:Edgeworth}
We are also going to need basic results on Edgeworth expansion of the density for sums of independent non-identically distributed random variables, which provides the correction to the local Central Limit Theorem. As references for this topic we propose the classical monographs \cite{MR0388499,MR0436272}.

Consider a sequence $X_1,X_2,\ldots$ of independent centered random variables. Let $B_n = \sum_{i=1}^n \E X_i^2$ and for integers $k \ge 2$ and $n\ge 1$ define the number
\begin{align}\label{eq:lambda-definition}
  \lambda_{kn} = \frac{n^{(k-2)/2}}{B_n^{k/2}}\sum_{i=1}^n \ka_k(X_i)
\end{align}
and a function $q_{kn}\colon \R \to \R$,
\begin{displaymath}
  q_{kn}(x) = \frac{1}{\sqrt{2\pi}}e^{-x^2/2}\sum_{{1\cdot r_1+\cdots+k\cdot r_k = k }\atop{r_1,\ldots,r_k\in \N}} H_{k+2(r_1+\cdots+r_k)}(x)\prod_{l=1}^k\frac{1}{r_l!}\Big(\frac{\lambda_{l+2,n}}{(l+2)!}\Big)^{r_l}.
\end{displaymath}
In particular
\begin{align}\label{eq:terms-of-expansion}
q_{0n}(x) &= \frac{1}{\sqrt{2\pi}}e^{-x^2/2},\nonumber\\
q_{1n}(x) &= \frac{1}{\sqrt{2\pi}}e^{-x^2/2}H_3(x)\frac{\lambda_{3,n}}{6},\nonumber\\
q_{2n}(x) &= \frac{1}{\sqrt{2\pi}}e^{-x^2/2}\Big(H_4(x)\frac{\lambda_{4,n}}{24}+ H_6(x)\frac{\lambda_{3,n}^2}{72}\Big),\nonumber\\
q_{3n}(x) &= \frac{1}{\sqrt{2\pi}}e^{-x^2/2}\Big(H_5(x)\frac{\lambda_{5,n}}{120} + H_7(x)\frac{\lambda_{3,n}\lambda_{4,n}}{144} + H_9(x)\frac{\lambda_{3,n}^3}{1296}\Big).
\end{align}

The Edgeworth type theorem we are going to use is the following special case ($l=1$) of \cite[Theorem 7, p. 175]{MR0388499} .

\begin{theorem}\label{thm:Edgeworth}
Let $X_1,X_2,\ldots$ be independent random variables with mean zero. Denote $B_n = \sum_{i=1}^n \E X_i^2$. Let $K \ge 3$ be an integer. Assume that
\begin{itemize}
\item[(i)] $\liminf_{n\to \infty} \frac{B_n}{n} > 0$, \; $\limsup_{n\to \infty} \frac{1}{n}\sum_{i=1}^n \E |X_i|^K < \infty$,
\item[(ii)] $\lim_{n\to \infty} \frac{1}{n}\sum_{i=1}^n \E |X_i|^K \ind{|X_i| > n^\tau}  = 0$ for some positive $\tau < 1/2$,
\item[(iii)] for $\alpha = (K-1)/2$ and every fixed $\varepsilon > 0$
\begin{align}\label{eq:3rd-condition}
  \lim_{n\to \infty} n^\alpha \int_{|t|>\varepsilon} \prod_{i=1}^n \E e^{\i t X_i}dt = 0.
\end{align}
Then for all $n$ sufficiently large the random variable $S_n = B_{n}^{-1/2}\sum_{i=1}^n X_i$ admits a density $f_n$ such that
\begin{align}\label{eq:Edgeworth-expansion}
  f_n(x) = \frac{1}{\sqrt{2\pi}}e^{-x^2/2} + \sum_{k=1}^{K-2}\frac{q_{kn}(x)}{n^{k/2}} + o\Big(\frac{1}{n^{(K-2)/2}}\Big)
\end{align}
uniformly in $x\in \R$.
\end{itemize}
\end{theorem}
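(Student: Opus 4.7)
The statement is a classical Edgeworth expansion for sums of independent but non-identically distributed random variables, and the natural route is via Fourier inversion of the characteristic function $\varphi_n(t)=\E e^{\i t S_n}=\prod_{j=1}^n \E\exp(\i t X_j/\sqrt{B_n})$. The plan is first to check that for $n$ large enough $\varphi_n\in L^1(\R)$, so that $f_n(x)=\frac{1}{2\pi}\int_\R e^{-\i tx}\varphi_n(t)\,dt$ is a well-defined continuous density, and then to split the inversion integral into a central region $|t|\le \delta\sqrt n$, an intermediate region $\delta\sqrt n\le |t|\le \varepsilon\sqrt{B_n}$, and a far tail $|t|>\varepsilon\sqrt{B_n}$, treating the three ranges separately.

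On the central region the main input is the cumulant expansion
\begin{equation*}
\log \varphi_n(t)= -\frac{t^2}{2}+\sum_{k=3}^{K}\frac{(\i t)^k}{k!}\frac{\lambda_{k,n}}{n^{(k-2)/2}}+R_{n,K}(t),
\end{equation*}
obtained from $\log \E e^{\i s X_j}=\sum_{k\ge 1}\ka_k(X_j)(\i s)^k/k!$ after summing over $j$ and rescaling by $\sqrt{B_n}$. Conditions (i)--(ii) combine to give a uniform bound $|R_{n,K}(t)|\lesssim |t|^{K+1}n^{-(K-1)/2}$ and a Gaussian-like envelope $|\varphi_n(t)|\le e^{-ct^2}$ on this range via a standard Lyapunov-type argument, after truncating $X_j$ at level $n^\tau$ (which is what (ii) is tailored for). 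Exponentiating and Taylor-expanding the polynomial correction to order $K-2$ in powers of $n^{-1/2}$ yields
\begin{equation*}
\varphi_n(t)=e^{-t^2/2}\Bigl(1+\sum_{k=1}^{K-2}\frac{P_{k,n}(\i t)}{n^{k/2}}\Bigr)+\mathrm{err}(t),
\end{equation*}
where the polynomials $P_{k,n}$ are assembled combinatorially from $\lambda_{3,n},\dots,\lambda_{k+2,n}$ in precisely the pattern defining $q_{k,n}$. Inverting termwise via the identity $\frac{1}{2\pi}\int_\R e^{-\i tx}(\i t)^m e^{-t^2/2}\,dt=\frac{1}{\sqrt{2\pi}}H_m(x)e^{-x^2/2}$ converts each monomial $(\i t)^m$ into a Hermite polynomial times the Gaussian density, reproducing exactly the functions $q_{k,n}(x)$ listed in \eqref{eq:terms-of-expansion}.

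The intermediate region contributes at most $e^{-cn}$ thanks to the envelope $|\varphi_n(t)|\le e^{-ct^2}$, so its influence is absorbed in the $o(n^{-(K-2)/2})$ error. The far tail $|t|>\varepsilon\sqrt{B_n}$ is precisely what condition (iii) is designed to control: after the rescaling $t\mapsto t\sqrt{B_n}$, \eqref{eq:3rd-condition} with $\alpha=(K-1)/2$ forces $n^{(K-2)/2}\int_{|t|>\varepsilon}|\varphi_n(t)|\,dt\to 0$, killing the tail contribution at the required order. The main obstacle is making all of the error bounds uniform in $x\in\R$ and, crucially, identifying the polynomials $P_{k,n}$ emerging from the Taylor expansion of $\exp(\cdot)$ with the prescribed $q_{k,n}$; this requires careful Faà di Bruno-type bookkeeping of multi-indices $(r_1,\dots,r_k)$ in the cumulant expansion and a quantitative check that truncating $X_j$ at $n^\tau$ perturbs each $\lambda_{k,n}$ by an amount consistent with the target error, so that the expansion produced by the truncated variables actually matches the $q_{k,n}$ formed with the original $\lambda_{k,n}$.
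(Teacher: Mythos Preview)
The paper does not prove this theorem at all: it is quoted as a special case ($l=1$) of \cite[Theorem 7, p.~175]{MR0388499} (Petrov's monograph), and is used as a black box in the proof of Theorem~\ref{thm:d=1-p-arbitrary}. So there is no ``paper's own proof'' to compare against.

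That said, your outline is exactly the classical Fourier-inversion strategy that Petrov uses: integrability of $\varphi_n$ for large $n$ (here via condition (iii)), the three-region split of the inversion integral, the cumulant expansion of $\log\varphi_n$ with truncation at level $n^\tau$ to control remainders (this is the role of (ii)), exponentiation and identification of the polynomials via Fa\`a di Bruno, and the Hermite identity to pass from $(\i t)^m e^{-t^2/2}$ to $H_m(x)e^{-x^2/2}$ under the inverse transform. The only point worth flagging is the scaling in the tail estimate: since $B_n\asymp n$ by (i), after substituting $t\mapsto t\sqrt{B_n}$ the Jacobian contributes a factor $\sqrt{B_n}\asymp \sqrt n$, so \eqref{eq:3rd-condition} with $\alpha=(K-1)/2$ gives exactly $o(n^{-(K-2)/2})$ for the far-tail contribution, as you state. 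Your sketch is correct and matches the standard argument.
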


To apply the above theorem we will need the following lemma (a corollary to \cite[Lemma 10]{MR0388499} and the remark following its proof (p. 174)).

\begin{lemma}\label{le:Edgeworth-conditions} Let $X_1,X_2,\ldots$ be a sequence of independent centered random variables. Assume that there exists a set $\mathcal{I}$ of positive integers and positive constants $\lambda,\delta,C,r$ such that
\begin{displaymath}
\liminf_{n\to \infty} \frac{|\mathcal{I}\cap [n]|}{n^\lambda} > 0,
\end{displaymath}
and for all $n \in \mathcal{I}$ and $|t| > R$,
\begin{align}\label{eq:Edgeworth-lemma-2nd-condition}
  |\E e^{\i t X_n}| \le \frac{C}{|t|^\delta}.
\end{align}
Then the condition \eqref{eq:3rd-condition} is satisfied for any $\alpha > 0$. Moreover the condition \eqref{eq:Edgeworth-lemma-2nd-condition} is satisfied for some $C,\delta,R > 0$ if the variables $X_n, n\in \mathcal{I}$, have densities with the variation bounded by a common constant.
\end{lemma}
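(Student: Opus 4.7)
The plan is to verify the two claims in reverse order, since the second (bounded variation of densities implies polynomial decay of $|\E e^{\i t X_n}|$) supplies concrete inputs for the first. The first claim proceeds by a split-integral argument: the tail part $|t|>T$ is controlled using the polynomial bound raised to the power $|\mathcal{I}\cap[n]|\ge cn^\lambda$, giving super-polynomial decay in $n$; the intermediate part $\varepsilon<|t|\le T$ is handled by a uniform strict bound on $|\E e^{\i t X_n}|$, which is the delicate step.

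For the \emph{moreover} clause, suppose each density $f_n$ ($n\in\mathcal{I}$) has total variation bounded by a common constant $V$. Since $f_n\in L^1(\R)$ has bounded variation it tends to $0$ at $\pm\infty$, so Riemann--Stieltjes integration by parts gives
\begin{displaymath}
\E e^{\i t X_n}=\int_\R e^{\i t x}f_n(x)\,dx=-\frac{1}{\i t}\int_\R e^{\i t x}\,df_n(x),
\end{displaymath}
hence $|\E e^{\i t X_n}|\le V/|t|$ for all $t\ne 0$. This establishes \eqref{eq:Edgeworth-lemma-2nd-condition} with $\delta=1$, $C=V$, and any $R>0$.

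For the first claim, set $\psi_i(t)=\E e^{\i t X_i}$ and $k_n=|\mathcal{I}\cap[n]|$, so $k_n\ge cn^\lambda$ for all sufficiently large $n$. Fix $\varepsilon,\alpha>0$ and choose $T\ge\max(R,\varepsilon)$ so that $q:=C/T^\delta<1$. Discarding indices outside $\mathcal{I}$ via $|\psi_i|\le 1$, for $|t|>T$ one has
\begin{displaymath}
\prod_{i=1}^n|\psi_i(t)|\le\Big(\frac{C}{|t|^\delta}\Big)^{k_n},
\end{displaymath}
and integration over $|t|>T$ (once $\delta k_n>1$) yields a bound of order $\tfrac{2T}{\delta k_n-1}\,q^{k_n}$, which is super-polynomially small in $n$ because $k_n\ge cn^\lambda$. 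Granting a uniform-in-$i\in\mathcal{I}$ bound $|\psi_i(t)|\le q'<1$ on the compact annulus $\varepsilon<|t|\le T$, the corresponding integral is at most $2T(q')^{k_n}$, again super-polynomially small. Summing the two estimates and multiplying by $n^\alpha$ establishes \eqref{eq:3rd-condition}.

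The main obstacle is establishing the uniform strict bound $|\psi_i(t)|\le q'<1$ on the compact annulus, which is needed precisely when $\varepsilon<R$. Pointwise $|\psi_i(t)|<1$ for $t\ne 0$ follows from non-degeneracy of $X_i$; uniformity in $i\in\mathcal{I}$ is obtained by combining the Taylor expansion $\psi_i(t)=1-\sigma_i^2 t^2/2+O(|t|^3)$ near zero (using the moment hypotheses in Theorem \ref{thm:Edgeworth}(i)) with the large-$|t|$ bound $|\psi_i(t)|\le V/|t|$ supplied by the variation hypothesis; a finite covering of $[\varepsilon,T]$ then produces a single $q'<1$ valid uniformly over $i\in\mathcal{I}$. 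This equicontinuity-style step is the one place in the argument where hypotheses stronger than pure polynomial tail decay are genuinely required.
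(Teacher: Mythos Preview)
The paper does not supply its own proof of this lemma; it simply cites Petrov's Lemma~10 and the remark following it. So there is no in-paper argument to compare against, and your task was effectively to reconstruct what Petrov does.

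Your treatment of the \emph{moreover} clause is correct and standard: integration by parts against a density of bounded variation gives $|\psi_i(t)|\le V/|t|$ for all $t\ne 0$.

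For the first claim, you correctly isolate the real difficulty: the hypothesis controls $|\psi_i(t)|$ only for $|t|>R$, while \eqref{eq:3rd-condition} must hold for every $\varepsilon>0$, so the compact region $\varepsilon<|t|\le R$ needs a separate argument. Your proposed fix, however, does not close this gap. You import two hypotheses that are \emph{not} part of the first claim --- the bounded-variation assumption (which belongs only to the moreover clause) and the moment conditions of Theorem~\ref{thm:Edgeworth}(i) --- and even with these the sketch fails. The Taylor expansion $\psi_i(t)=1-\sigma_i^2 t^2/2+O(|t|^3)$ is informative only for $|t|$ near $0$, not on $[\varepsilon,T]$, and in any case requires a uniform lower bound on the individual variances $\sigma_i^2$, which condition~(i) does not provide (it bounds only the average $B_n/n$). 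The bound $V/|t|$ is below $1$ only for $|t|>V$. Neither piece covers the annulus $[\varepsilon,\min(R,V)]$, and no ``finite covering'' manufactures the missing uniform strict bound $q'<1$. Your own final sentence effectively concedes that the stated hypotheses are insufficient.

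In short, you have put your finger on a genuine subtlety in the lemma as the paper states it, but the resolution you propose does not work. The paper sidesteps the issue by deferring entirely to Petrov, whose formulation presumably carries the extra structure needed on the intermediate region.
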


\section{Proof of Theorem \ref{thm:CLT}}\label{sec:proofs-determinant}

Let us now give the proof of Theorem \ref{thm:CLT}. It will be split into several steps presented in separate sections. To simplify the notation we will often suppress the dependence of random variables on the dimension $n$. It should be remembered that the asymptotic notation such as $o(1)$, $O(n)$ etc. concerns $n\to \infty$. The implicit constants may depend on $d,p$ and some other parameters (independent of $n$), e.g., the order of moments considered. Recall also the notation $\Om(\cdot)$, $\om(\cdot)$ introduced in Section \ref{sec:notation}.

\subsection{Preliminary steps}
A random subspace of $\R^n$ of dimension $d$ may be obtained as $\spa(G_1,\ldots,G_d)$, where $G_i = G_i^{(n)} = (g_{i1},\ldots,g_{in})$, $i=1,\ldots,d$ are i.i.d. standard Gaussian vectors in $\R^n$ (i.e., $g_{ij}$, $i \in [d], j\in [n]$ are i.i.d. Gaussian variables of mean zero and variance one). If we define $u_1,\ldots,u_d$ as the result of the Gram-Schmidt orthogonalization applied to $G_1,\ldots,G_d$ then $H = \spa(G_1,\ldots,G_d)^\perp = \spa(u_1,\ldots,u_d)^\perp$ is a uniform subspace of codimension $d$ in $\R^n$. More explicitly we have
\begin{align*}
u_i = \frac{G_i - P_{i-1} G_i}{|G_i - P_{i-1} G_i|},
\end{align*}
where for $i = 0,\ldots,d-1$, $P_i \colon \R^n \to \R^n$ is the orthogonal projection on the random subspace $\spa(G_1,\ldots,G_i)$. Note that with probability one this subspace is of dimension $i$. Moreover, the random operator $P_{i-1}$ is stochastically independent of the random vectors $G_i,\ldots,G_d$.

Observe also that each $u_i$ is distributed uniformly on the sphere $S^{n-1}$. Note that the second moment of a single coordinate of $u_i$ is equal to $1/\sqrt{n}$. Therefore, by the concentration inequality on the sphere (Theorem \ref{thm:spherical}) and the union bound, for any $t \ge 1$,
\begin{align}\label{eq:l-infinity-first-bound}
  \p\Big(\max_{i\le d} \|u_i\|_\infty \ge Ct\frac{\sqrt{\log(n)}}{\sqrt{n}}\Big) \le 2nd\exp(-ct^2 \log n)
\end{align}
As a consequence we obtain
\begin{align}\label{eq:l-infinity-bound}
  \p\Big(\max_{i\le d} \|u_i\|_\infty \ge C \frac{\sqrt{\log n}}{\sqrt{n}}\Big) = o(1).
\end{align}
Moreover, for any $q > 0$, \eqref{eq:l-infinity-first-bound} and integration by parts give
\begin{align}\label{eq:l-infinity-moments}
\Big\|\max_{i\le d} \|u_i\|_\infty \Big\|_q\le C_{d,q} \frac{\sqrt{\log n}}{\sqrt{n}}
\end{align}
in other words $\max_{i\le d} \|u_i\|_\infty = \Om(\sqrt{(\log n)/n})$ (recall the notation $\Om, \om$ introduced in Section \ref{sec:notation}).

Let $W_1,\ldots,W_n$ be i.i.d. random variables with density proportional to $\frac{1}{\sqrt{x}}g_{p/2}(x)$ and $X_i = W_i^{-1}$. We will assume that the family $(W_i)_i$ is stochastically independent of $(g_{ij})_{ij}$ and we will denote integration with respect to them by $\E_W$ and $\E_G$ respectively.

Going back to random subspaces, denote by A the $d\times n$ matrix with rows $u_1,\ldots,u_d$, and let $v_1,\ldots,v_n$ be its columns (thus $v_i$'s are random vectors in $\R^d$). Let also $\widetilde{A}$ be the matrix with columns $\sqrt{X_i} v_i$. Then
\begin{displaymath}
  \sum_{j=1}^n X_j v_j v_j^T = \tilde{A}\tilde{A}^T
\end{displaymath}
and using the Cauchy--Binet formula we get
\begin{align}\label{eq:determinant}
 Y_n := \det\Big(\sum_{j=1}^n X_j v_j v_j^T\Big) = \sum_{\ii \in \rng} a_\ii X_{i_1}\cdots X_{i_d},
\end{align}
where for $\ii = (i_1,\ldots,i_d) \in \rng$, $a_\ii = \frac{1}{d!}(\det A_\ii)^2$, with $A_\ii$ being the $d\times d$ matrix with columns $ v_{i_1},\ldots,v_{i_d}$ (we choose to write the quantities of interest in terms of summation over the whole set $\rng$ rather than just over increasing sequences to simplify bookkeeping of indices which will appear in the sequel). Another application of the Cauchy-Binet formula gives
\begin{align} \label{eq:sum-of-a's}
\sum_{\ii \in \rng} a_\ii = \det\Big(\sum_{j=1}^n v_jv_j^T\Big) = \det(\id) = 1.
\end{align}

Moreover, using the multilinearity of the determinant and the fact that it vanishes for matrices which are not of full rank together with the definition of the vectors $u_j$ and $v_j$ one can expand $\det A_\ii$ along the steps of the Gram-Schmidt procedure and see that
\begin{align}\label{eq:def-a_ii}
a_\ii = \frac{1}{d!} \frac{1}{\prod_{\ell=1}^d |G_\ell - P_{\ell-1} G_\ell|^2} \det(\Gamma_\ii)^2,
\end{align}
where $\Gamma_\ii$ is the $d\times d$ matrix with columns $\Gamma_{i_1},\ldots,\Gamma_{i_d}$ with $\Gamma_i = (g_{ji})_{j \in [d]}$.

Moreover, by \eqref{eq:l-infinity-bound}, with probability tending to one as $n\to \infty$,
\begin{align}\label{eq:2nd-l-infinity-bound}
  \max_{\ii \in \rng} a_\ii \le C_d \frac{(\log n)^d}{n^d}
\end{align}
whereas \eqref{eq:l-infinity-moments} gives
\begin{align}\label{eq:2nd-l-infinity-bound-moments}
  \max_{\ii \in \rng} a_\ii =\Om\Big(\frac{(\log n)^d}{n^d}\Big).
\end{align}

Observe that thanks to  Lemma \ref{le:W-moments} (applied with $\alpha = p/2$) we have $\E X_i^\rho < \infty$ for all $\rho> 0$ and $\E X_i^{-\rho} < \infty$ for $\rho < p/2 + 1/2$.
Note that $p/2 +1/2 > 1/2$.  Using Corollary \ref{cor:moment-comparison-negative} together with \eqref{eq:sum-of-a's} we obtain that there exists $\rho=\rho_p>1/2$ and a constant $C_{p,d}$, depending only on $p$ and $d$, such that
\begin{align}\label{eq:negative-Holder-CLT}
  \E_X \frac{1}{(\sum_{\ii\in\rng} a_\ii X_{i_1}\cdots X_{i_d})^\rho} \le C_{p,d} \frac{1}{(\sum_{\ii\in \rng} a_\ii)^\rho} = C_{p,d}.
\end{align}
(Note that thanks to \eqref{eq:sum-of-a's} the bound on the random quantity on the left-hand side becomes deterministic.)

Similarly, by Corollary \ref{cor:moment-comparison}  we get that for all $r > 1$ there exists $C_{p,d,r}$ such that for every tetrahedral polynomial $Q$ of degree at most $d$,
\begin{align}\label{eq:positive-Holder-CLT}
   \|Q(X_1,\ldots,X_n)\|_r \le C_{p,d,r}\|Q(X_1,\ldots,X_n)\|_1.
\end{align}
By Corollary \ref{cor:N-T},
\begin{align}\label{eq:volume-representation-proof-of-CLT}
  \frac{\Vol_{n-d}(H_n\cap B_p^n)}{\Vol_{n-d}(B_p^{n-d})} = \frac{2^{d}\Gamma(1+1/p)^d}{\pi^{d/2}}\E_X \frac{1}{\sqrt{Y_n}},
\end{align}
our goal is thus to obtain a limit theorem for the variable
\begin{align}\label{eq:goal}
  \E_X \frac{1}{\sqrt{Y_n}} - \frac{\pi^{d/2}}{2^{d}\Gamma(1+1/p)^d} \Big(a_{p,d} - \frac{1}{n}b_{p,d}\Big).
\end{align}

Note that in the above expression the random variables $X_i$ are integrated out and it is a function of the random vectors $G_1,\ldots,G_d$.

\subsection{Linearization}

We will make use of the identity
\begin{multline*}
\frac{1}{\sqrt{y}} = \frac{1}{\sqrt{\mu}} - \frac{y-\mu }{2 \mu ^{3/2}} + \frac{3 (y-\mu )^2}{8 \mu ^{5/2}} - \frac{5 (y-\mu )^3}{16 \mu ^{7/2}} +
\frac{\left(\sqrt{y}-\sqrt{\mu }\right)^4 \left(16 \mu ^{3/2}+5 y^{3/2}+20 \sqrt{\mu } y+29 \mu  \sqrt{y}\right)}{16 \mu ^{7/2} \sqrt{y}},
\end{multline*}
valid for $y, \mu > 0$.

Applying it to $y = Y_n = \sum_{\ii\in\rng} a_\ii X_{i_1}\cdots X_{i_d}$ and
\begin{align}\label{eq:definition-of-mu}
\mu = \E_X Y_n = (\E X_1)^d
\end{align}
(where we used \eqref{eq:sum-of-a's}) and integrating with respect to $X_i$'s we obtain

\begin{multline}\label{eq:three-terms}
\E_X \frac{1}{\sqrt{Y_n}} - \frac{1}{\sqrt{\mu}}\\
= \frac{3\E_X (Y_n - \mu)^2}{8\mu^{5/2}} - \frac{5\E_X (Y_n-\mu)^3}{16 \mu^{7/2}}
+ \E_X \frac{\left(\sqrt{Y_n}-\sqrt{\mu }\right)^4 \left(16 \mu^{3/2}+5 Y_n^{3/2}+20 \sqrt{\mu} Y_n+29 \mu\sqrt{Y_n}\right)}{16 \mu^{7/2} \sqrt{Y_n}}\\
=: I_n + II_n + III_n.
\end{multline}
We will first show that $II_n$ and $III_n$ are $\om(n^{-3/2})$ as $n \to \infty$.

\vskip0.5cm

Let us start with $II_n$. To simplify the notation, for $\ii \in \rng$ denote
\begin{align}\label{eq:fat-X-def}
\mathbf{X}_{\ii} = X_{i_1}\cdots X_{i_d} - \E_X X_{i_1}\cdots X_{i_d}.
\end{align}
For two multi-indices $\ii,\jj \in \rng$ let
\begin{align}\label{eq:definition-of-level-set}
\mathcal{E}(\ii,\jj) = \{(k,l)\in [d]^2 \colon i_k = j_l\}.
\end{align}
For $I,J,K \subset [d]^2$ let
\begin{displaymath}
A(I,J,K) = \{(\ii,\jj,\kk)\in (\rng)^3\colon  \mathcal{E}(\ii,\jj) = I, \mathcal{E}(\jj,\kk) = J, \mathcal{E}(\ii,\kk) = K\}.
\end{displaymath}
Note that if at least two of the sets $I,J,K$ are empty then by independence of $X_i$'s, we have $\E_X \mathbf{X}_{\ii} \mathbf{X}_{\jj} \mathbf{X}_{\kk} = 0$ for all $(\ii,\jj,\kk) \in A(I,J,K)$.

Thus
\begin{align*}
  &\E_X (Y_n - \mu)^3
  =
  \sum_{I,J,K} \sum_{(\ii,\jj,\kk) \in A(I,J,K)} a_\ii a_\jj a_\kk \E_X \mathbf{X}_{\ii} \mathbf{X}_{\jj} \mathbf{X}_{\kk},
\end{align*}
where the outer summation is over all triples $I, J, K$ of subsets of $[d]^2$ in which at least two sets are non-empty. To prove that $II_n = \om(n^{-3/2})$ it thus suffices to show that for each such triple
\begin{align}\label{eq:II_n-summand}
  \sum_{(\ii,\jj,\kk) \in A(I,J,K)}  a_\ii a_\jj a_\kk
  \E_X |\mathbf{X}_{\ii} \mathbf{X}_{\jj} \mathbf{X}_{\kk}| = \om(n^{-3/2}).
\end{align}

Observe first that $|A(I,J,K)| \le n^{3d-2}$. Moreover, by \eqref{eq:2nd-l-infinity-bound-moments} and H\"older's inequality
\begin{displaymath}
a_\ii a_\jj a_\kk = \Om( (\log n)^{3d} n^{-3d}).
\end{displaymath}
Since $X_i's$ have moments of all orders, another application of H\"older's inequality gives $\E_X |\mathbf{X}_{\ii} \mathbf{X}_{\jj} \mathbf{X}_{\kk}| \le C_{d,p}$. These three observations imply  that for any $q \ge 1$ the left hand side of \eqref{eq:II_n-summand} is bounded in $L_q$ by $C_{d,p,q} (\log n)^{3d} n^{-2} = o(n^{-3/2})$. Thus indeed $II_n = \om(n^{-3/2})$.
\medskip

We will now pass to $III_n$. Recall \eqref{eq:negative-Holder-CLT} and \eqref{eq:positive-Holder-CLT}. Let $\kappa = 4\rho/(2\rho - 1)>1$. Using H\"older's inequality applied with exponents
$\kappa,\kappa,2\rho$ and taking into account that $\mu = (\E X_1)^d$ depends only on $p$ and $d$ we obtain that
\begin{multline*}
III_n =  \E_X \frac{\Big(\sqrt{Y_n}-\sqrt{\mu }\Big)^4 \Big(16 \mu^{3/2}+5 Y_n^{3/2}+20 \sqrt{\mu} Y_n+29 \mu\sqrt{Y_n}\Big)}{16 \mu^{7/2} \sqrt{Y_n}}\\
\le C_{p,d} (\E_X |\sqrt{Y_n} -\sqrt{\mu}|^{4\kappa})^{1/\kappa}\Big(\E_X \Big(16 \mu^{3/2}+5 Y_n^{3/2}+20 \sqrt{\mu} Y_n+29 \mu\sqrt{Y_n}\Big)^\kappa\Big)^{1/\kappa}(\E_X Y_n^{-\rho})^{1/(2\rho)}.
\end{multline*}
Using \eqref{eq:positive-Holder-CLT} and \eqref{eq:definition-of-mu}, we see that the second factor on the right hand side above is bounded from above by $C_{p,d}$, whereas an application of \eqref{eq:negative-Holder-CLT} shows that the third factor is bounded by $C_{p,d}$, thus to prove that $III_n = \om(n^{-3/2})$ it is enough to show that
\begin{align}\label{eq:enough-for-IIIn}
  (\E_X |\sqrt{Y_n} -\sqrt{\mu}|^{4\kappa})^{1/\kappa} = \om(n^{-3/2}).
\end{align}

Using the inequality $|\sqrt{y} - \sqrt{\mu}| \le \mu^{-1/2}|y - \mu|$, we get
\begin{displaymath}
  (\E_X |\sqrt{Y_n} -\sqrt{\mu}|^{4\kappa})^{1/\kappa} \le C_{p,d} (\E_X|Y_n - \mu|^{4 \kappa})^{1/\kappa} \le C_{p,d} (\E_X (Y_n - \mu)^2)^2,
\end{displaymath}
where in the last inequality we used \eqref{eq:positive-Holder-CLT}.

Thus it is enough to show that $\E_X(Y_n-\mu)^2 = \om(n^{-3/4})$. Note that $\E_X(Y_n-\mu)^2$ is proportional to the term $I_n$ in \eqref{eq:three-terms}. In the next step of the proof we will perform a careful analysis of this term, from which it will in particular follow that  it is $\Om(n^{-1})$. For transparency let us however provide now a simple argument showing that it is indeed $\om(n^{-3/4})$. Some of the calculations will be also used in the said more precise analysis.

Recall the notation \eqref{eq:definition-of-level-set}. Similarly as for $II_n$, for $I\subset [d]^2$, define $A(I) = \{(\ii,\jj)\in (\rng)^2\colon \mathcal{E}(\ii,\jj) = I\}$. By independence of $X_i$'s we  have

\begin{align}\label{eq:I_n-decomposition}
 \E_X(Y_n - \mu)^2 = \sum_{\emptyset \neq I \subset [d]^2} \sum_{(\ii,\jj)\in A(I)} a_\ii a_\jj \E_X(\mathbf{X}_\ii\mathbf{X}_\jj)
\end{align}

For fixed $I$ and $q \ge 1$, by the triangle inequality in $L_q$ we obtain that as $n\to \infty$,
\begin{align}\label{eq:small-I}
  \Big\|\sum_{\stackrel{\ii,\jj \in \rng}{\mathcal{E}(\ii,\jj) = I}} a_\ii a_\jj \E_X(\mathbf{X}_\ii\mathbf{X}_\jj)\Big\|_q \le C_{p,d}|A(I)| \Big\| \max_{\ii \in \rng} a_\ii^2 \Big\|_q \le  C_{p,d,q} |A(I)|\frac{(\log n)^{2d}}{n^{2d}},
\end{align}
where we again used the fact that $X$ has all moments and \eqref{eq:2nd-l-infinity-bound-moments}.

For $I\neq \emptyset$ we have $|A(I)| \le n^{2d-1}$ and so the right hand side above is bounded by $C_{p,d,q} (\log n)^{2d}n^{-1} = o(n^{-3/4})$.

Summing over $I$ we get the claimed estimate.

\subsection{Analysis of randomly normalized $U$-statistics}

We will now analyze the term $I_n =\frac{3\E_X (Y_n - \mu)^2}{8\mu^{5/2}}$. We will use the notation introduced at the end of the previous section. Note that if $I\subset [d]^2$ satisfies $|I|\ge 2$, then $|A(I)| \le n^{2d-2}$. Thus by \eqref{eq:I_n-decomposition} and \eqref{eq:small-I}, we see that
\begin{displaymath}
  \E_X(Y_n -\mu)^2 = \sum_{I\subset [d]^2, |I|=1} \sum_{\stackrel{\ii,\jj \in \rng}{\mathcal{E}(\ii,\jj) = I}} a_\ii a_\jj \E_X(\mathbf{X}_\ii\mathbf{X}_\jj) + \om(n^{-3/2}).
\end{displaymath}
Since $X_i$'s are i.i.d, for all $\ii,\jj$, such that $|\mathcal{E}(\ii,\jj)| = 1$, we have
\begin{align}\label{Eq:definition-of-nu}
  \E \mathbf{X}_\ii\mathbf{X}_\jj = \E X_1^2 \prod_{k=2}^{2d-1}X_k - (\E X_1)^{2d} = (\E X_1)^{2d-2} \Var(X_1) =:\nu.
\end{align}
Thus
\begin{align}\label{eq:I_n-representation}
I_n = \frac{3\nu}{8\mu^{5/2}} \sum_{I\subset [d]^2, |I|=1} \sum_{\stackrel{\ii,\jj \in \rng}{\mathcal{E}(\ii,\jj) = I}} a_\ii a_\jj + \om(n^{-3/2}).
\end{align}

Recall that $\Gamma_1,\ldots,\Gamma_n$ are the columns of the matrix with rows $G_1,\ldots,G_d$.
Recall also the definition \eqref{eq:def-a_ii} of the coefficients $a_\ii$, which implies that
\begin{align}\label{eq:sum-representation}
  Z_n:= \sum_{I\subset [d]^2, |I|=1} \sum_{\stackrel{\ii,\jj \in \rng}{\mathcal{E}(\ii,\jj) = I}} a_\ii a_\jj = \frac{S_n}{V_n},
\end{align}
where
\begin{align}\label{eq:definition-V_n}
  V_n = \prod_{\ell=1}^d|G_\ell-P_{\ell-1}G_\ell|^4.
\end{align}
and (recall the notation concerning $U$-statistics from Section \ref{sec:U-statistics})
\begin{align}\label{eq:definition-U_n}
S_n = S_n(h) = \sum_{\ii \in \Rng{2d-1}} h(\Gamma_{i_1},\ldots,\Gamma_{i_{2d-1}})
\end{align}
is a $U$-statistic of order $2d-1$ with kernel with $h \colon (\R^d)^{2d-1} \to \R$ defined as
\begin{align}\label{eq:h-definition}
  h(x_1,\ldots,x_{2d-1}) = \frac{1}{(2d-1)!}\sum_{k=1}^{2d-1} \sum_{J\subset [2d-1]\setminus\{k\}, |J| = d-1} \det(\{x_k\}\cup \{x_\ell\}_{\ell \in J})^2\det(\{x_k\}\cup\{x_\ell\}_{\ell \in [2d-1]\setminus\{k\}\setminus J})^2.
\end{align}
We slightly abuse the notation and treat here the determinant squared as a function of a set rather than a sequence of vectors. Note that $h$ is a symmetric function.

Let us first establish a LLN type behaviour of the denominator on the right hand side of \eqref{eq:sum-representation}. Denote $H_\ell = \spa(G_1,\ldots,G_{\ell-1})$ and note that conditionally on $G_1,\ldots,G_{\ell-1}$,  $P_{\ell-1} G_\ell$ is a standard Gaussian vector on $H_\ell$. In particular, using Theorem \ref{thm:Gaussian-hypercontractivity} we obtain that
\begin{displaymath}
\E |P_{\ell-1} G|^q = \E \E(|P_{\ell-1} G|^q|G_1,\ldots,G_{\ell-1}) \le C_q^q d^{q/2}.
\end{displaymath}

Moreover $H_\ell$ is almost surely of dimension $\ell-1 \le d-1$ and $|G_\ell - P_{\ell-1} G|^2 = |G_\ell|^2 - |P_{\ell-1} G_\ell|^2$. Using the fact that by Chebyshev's inequality and Theorem \ref{thm:Gaussian-hypercontractivity}, $||G_\ell|^2 - n| = \Om(\sqrt{n})$, together with H\"older's inequality, one can see that
\begin{displaymath}
  \prod_{\ell=1}^d|G_\ell-P_{\ell-1}G_\ell|^2 = \prod_{\ell=1}^d|G_\ell|^2 + \Om(n^{d-1}) = n^{d} + \Om(n^{d-1/2}).
\end{displaymath}
As a consequence, again by H\"older's inequality,
\begin{align}\label{eq:denominator-preliminary}
  V_n = \prod_{\ell=1}^d|G_\ell|^4  + \Om(n^{2d-1}) =  n^{2d} + \Om(n^{2d-1/2}).
\end{align}

Observe also that if $G$ is a standard Gaussian vector in $\R^m$ and $m > q+3$, then integrating in polar coordinates together with the Stirling formula gives
\begin{displaymath}
  \E |G|^{-q} = \E |G|^2 \frac{2^{(m-q-2)/2}\Gamma\Big(\frac{m-q}{2}\Big)}{2^{m/2}\Gamma\Big(\frac{m}{2}+1\Big)} \le C_q m^{-q/2}.
\end{displaymath}
Conditionally on $G_1,\ldots,G_{\ell-1}$, the random vector $G_\ell - P_{\ell-1}G_\ell$ is  a standard Gaussian vector on $H_\ell^\perp$, which is of dimension at least $n-d$. Thus, conditioning successively, we obtain that for $n> C_{d,q}$,
\begin{align}\label{eq:V_n-negative-moments}
  \Big\|\frac{n^{2d}}{V_n}\Big\|_q \le C_{d,q}.
\end{align}

Let us now pass to the numerator. To shorten the notation, denote $\E h = \E h(\Gamma_1,\ldots,\Gamma_d)$. By the Hoeffding decomposition \eqref{eq:Hoeffding-Sn}, applied with $2d-1$ instead of $d$, we have
\begin{align}\label{Eq:S_n-Hoeffding}
  S_n = \frac{n!}{(n-2d+1)!} \E h + \sum_{k=1}^{2d-1} \binom{2d-1}{k} \frac{(n-k)!}{(n-2d+1)!} S_n^{(k)}(\pi_k h).
\end{align}
By \eqref{eq:canonical-second-moment} and \eqref{eq:Hoeffding-contraction property} we have
\begin{displaymath}
  \E |S_n^{(k)}(\pi_k h)|^2 = \frac{n!k!}{(n-k)!} \E (\pi_k h(\Gamma_1,\ldots,\Gamma_k))^2 \le \frac{n!k!}{(n-k)!} \E h(\Gamma_1,\ldots,\Gamma_k)^2.
\end{displaymath}

The variables $S_n^{(k)}(\pi_k h)$ are polynomials in the Gaussian vectors $G_1,\ldots,G_d$ of degree at most $d^{4}$. Thus the above estimate in combination with Theorem \ref{thm:Gaussian-hypercontractivity} shows that
\begin{displaymath}
  \frac{(n-k)!}{(n-2d+1)!} S_n^{(k)}(\pi_k h) = \Om(n^{2d-1-k/2}).
\end{displaymath}
H\"older's inequality and \eqref{eq:V_n-negative-moments} give thus
\begin{displaymath}
  \frac{(n-k)!}{(n-2d+1)!} \frac{S_n^{(k)}(\pi_k h)}{V_n} = \Om(n^{-1-k/2}).
\end{displaymath}
Combined with \eqref{eq:sum-representation} and \eqref{Eq:S_n-Hoeffding}, this shows that
\begin{multline}\label{eq:Z_n-decomposition-1}
  Z_n = \frac{n!}{(n-2d+1)!} \frac{ \E h}{V_n} + (2d-1) \frac{(n-1)!}{(n-2d+1)!} \frac{S_n^{(1)}(\pi_1 h)}{V_n} + \Om(n^{-2})\\
  = n^{2d-1} \frac{ \E h}{V_n} + (2d-1) \frac{(n-1)!}{(n-2d+1)!} \frac{S_n^{(1)}(\pi_1 h)}{V_n} + \Om(n^{-2}),
\end{multline}
where in the second equality we again used \eqref{eq:V_n-negative-moments}.

Let us now derive the announced more precise asymptotics for the denominator. In what follows we will repeatedly and without a direct reference use Theorem \ref{thm:Gaussian-hypercontractivity}, to pass from boundedness of absolute moments of some fixed order for a sequence of Gaussian polynomials to the assertion that this sequence is in fact $\Om(1)$.

Using \eqref{eq:denominator-preliminary} we obtain
\begin{align*}
  V_n  &= \prod_{\ell=1}^d|G_\ell|^4 + \Om(n^{2d-1})\\
  & = \prod_{\ell=1}^d|\sum_{j=1}^n g_{\ell,j}^2|^2 + \Om(n^{2d-1})\\
  &= \sum_{\ii \in \Rng{2d}} \prod_{\ell=1}^d g_{\ell,i_{2\ell-1}}^2g_{\ell,i_{2\ell}}^2 + \sum_{\ii \in [n]^{2d}\setminus \Rng{2d}} \prod_{\ell=1}^d g_{\ell,i_{2\ell-1}}^2g_{\ell,i_{2\ell}}^2
+ \Om(n^{2d-1}).
\end{align*}
Using boundedness of moments of $g_{\ell,i}$'s together with H\"older's inequality and the fact that $|[n]^{2d}\setminus \Rng{2d}| \le C_d n^{2d-1}$ one obtains that the first moment of the second summand on the right hand side above is $\mathcal{O}(n^{2d-1})$. Thus
\begin{align*}
  V_n &= \sum_{\ii \in \Rng{2d}} \prod_{\ell=1}^d g_{\ell,i_{2\ell-1}}^2g_{\ell,i_{2\ell}}^2 + \Om(n^{2d-1})= \sum_{\ii \in \Rng{2d}} \prod_{j=1}^{2d} g_{\lfloor (j+1)/2\rfloor,i_j}^2 + \Om(n^{2d-1})\\
  &= \sum_{I\subseteq [2d]} \frac{(n-|I|)!}{(n-2d)!} \sum_{\ii_I \in \Rng{I}} \prod_{j\in I} (g_{\lfloor (j+1)/2\rfloor,i_j}^2-1) + \Om(n^{2d-1}).
\end{align*}
For $I\neq \emptyset$, using independence of  $g_{\ell,i}$'s and the equality $\E(g_{\ell,i}^2-1)^2 = 2$, we get
\begin{displaymath}
  \E \Big(\sum_{\ii_I \in \Rng{I}} \prod_{j\in I} (g_{\lfloor (j+1)/2\rfloor,i_j}^2-1)\Big)^2 = 2^{|I|} \frac{n!|I|!}{(n-|I|)!},
\end{displaymath}
thus
\begin{displaymath}
  \frac{(n-|I|)!}{(n-2d)!} \sum_{\ii_I \in \Rng{I}} \prod_{j\in I} (g_{\lfloor (j+1)/2\rfloor,i_j}^2-1) = \Om(n^{2d-|I|/2}).
\end{displaymath}
For $|I|\ge 2$ this is $\Om(n^{2d-1})$ and so we obtain that
\begin{align*}
  V_n &= \frac{n!}{(n-2d)!} + 2\frac{(n-1)!}{(n-2d)!}\sum_{\ell=1}^d \sum_{i=1}^n (g_{\ell,i}^2 -1) + \Om(n^{2d-1})\\
  & = n^{2d} + 2\frac{(n-1)!}{(n-2d)!} \sum_{\ell=1}^d \sum_{i=1}^n (g_{\ell,i}^2 -1) + \Om(n^{2d-1})\\
  & = n^{2d} + 2\frac{(n-1)!}{(n-2d)!} \sum_{i=1}^n (|\Gamma_i|^2 -d) + \Om(n^{2d-1}).
\end{align*}

Going back to \eqref{eq:Z_n-decomposition-1} we can write
\begin{align*}
  Z_n - \frac{\E h}{n} & = (\E h) \frac{n^{2d} - V_n}{n V_n} + (2d-1)\frac{(n-1)!}{(n-2d+1)!}\frac{S_n^{(1)}(\pi_1 h)}{V_n} + \Om(n^{-2})\\
  &= \frac{n^{2d}}{V_n} \frac{(n-1)!}{(n-2d)!n^{2d}} \Big(-2(\E h) \frac{1}{n} \sum_{i=1}^n(|\Gamma_i|^2 -d) + (2d-1)\frac{1}{n-2d+1} \sum_{i=1}^n \pi_1 h(\Gamma_i)\Big)  + \Om(n^{-2}),
\end{align*}
where in the second equality we used once more \eqref{eq:V_n-negative-moments} and H\"older's inequality.

Taking into account that $V_n/n^{2d}$ converges in probability to one and that the random vector
\begin{displaymath}
  \Big(\frac{1}{\sqrt{n}} \sum_{i=1}^n(|\Gamma_i|^2 -d), \frac{1}{\sqrt{n}} \sum_{i=1}^n \pi_1 h(\Gamma_i)\Big)
\end{displaymath}
converges weakly  to a Gaussian vector with covariance matrix
\begin{displaymath}
  \left(\begin{array}{cc}
  2d & \Cov(|\Gamma_i|^2 -d),\pi_1 h(\Gamma_1))\\
  \Cov(|\Gamma_i|^2 -d),\pi_1 h(\Gamma_1)) & \E (\pi_1 h(\Gamma_1))^2
  \end{array}\right),
\end{displaymath}
we obtain that
\begin{displaymath}
  n^{3/2}\Big(Z_n - \frac{\E h}{n}\Big)
\end{displaymath}
converges weakly  to a mean zero Gaussian variable with variance
\begin{displaymath}
  8(\E h)^2 d + (2d-1)^2\E(\pi_1 h(\Gamma_1))^2 - 4(2d-1)\E h\cdot \Cov(|\Gamma_1|^2-d,\pi_1 h(\Gamma_1)).
\end{displaymath}

Moreover, another application of \eqref{eq:V_n-negative-moments} together with Theorem \ref{thm:Gaussian-hypercontractivity}
and H\"older's inequality shows that for each $q > 0$ the $q$-th absolute moment of $n^{3/2}\Big(Z_n - \frac{\E h}{n}\Big)$ is bounded independently of $n$, which shows that the convergence in fact holds in $\Wass_q$ for any $q > 0$ (see the remark before Theorem \ref{thm:CLT}).

Using \eqref{eq:I_n-representation} and \eqref{eq:sum-representation} together with \eqref{eq:three-terms} and the fact that $II_n$ and $III_n$ are $o_m(n^{-3/2})$ we obtain that
\begin{displaymath}
n^{3/2}\Big(\E\frac{1}{\sqrt{Y_n}} - \frac{1}{\sqrt{\mu}} - \frac{3\nu \E h}{8\mu^{5/2} n} \Big)  = n^{3/2}\Big(I_n - \frac{3\nu \E h}{8\mu^{5/2}n}\Big) +\om(1)
\end{displaymath}
converges in $\Wass_q$ to a centered Gaussian variable with variance
\begin{align}\label{eq:sigma}
  \widetilde{\Sigma}_{p,d}^2 = \frac{9\nu^2}{64 \mu^5} \Big(8 (\E h)^2 d + (2d-1)^2\E(\pi_1 h(\Gamma_1))^2 - 4(2d-1)\E h\cdot \Cov(|\Gamma_1|^2-d,\pi_1 h(\Gamma_1))\Big).
\end{align}

\subsection{Calculation of the parameters}

In order to obtain an explicit formula for $\Sigma_{p,d}^2$ we need to calculate
\begin{displaymath}
  \mu,\; \nu, \; \E h(\Gamma_1),\;  \E (\pi_1 h(\Gamma_1))^2, \; \Cov(|\Gamma_1|^2-d,\pi_1 h(\Gamma_1)).
\end{displaymath}

The calculation of the first two parameters is straightforward as they are expressed in terms of moments of the random variables $X_1$, which are known thanks to \eqref{eq:stable-moments}. The remaining parameters are moments of Gaussian polynomials of fixed degree and their calculation involves additional combinatorial arguments.

Recalling that $\mu = \E Y_n = (\E X_1)^d$ where $X_1 = W_1^{-1}$ and $W_1$ has density $x^{-1/2}g_{p/2}(x)$ and using Lemma \ref{le:W-moments} with $\alpha = p/2$ and $q = -1$ we obtain
\begin{align}\label{eq:mu-calculation}
\mu = (\E W_1^{-1})^d = \Big(\frac{\Gamma\Big({3}/{p}\Big)\Gamma\Big({1}/{2}\Big)}{\Gamma\Big({3}/{2}\Big)\Gamma\Big({1}/{p}\Big)}\Big)^d =
\Big(2\frac{\Gamma\Big({3}/{p}\Big)}{\Gamma\Big({1}/{p}\Big)}\Big)^d,
\end{align}

Similarly,
\begin{multline}\label{eq:nu-calculation}
\nu = (\E X_1)^{2d-2} \Var(X_1) = (\E W_1^{-1})^{2d-2}(\E W_1^{-2} - (\E W_1^{-1})^2)\\
  = \Big(\frac{\Gamma\Big({3}/{p}\Big)\Gamma\Big({1}/{2}\Big)}{\Gamma\Big({3}/{2}\Big)\Gamma\Big({1}/{p}\Big)}\Big)^{2d-2}\Big(\frac{\Gamma\Big({5}/{p}\Big)\Gamma\Big({1}/{2}\Big)}{\Gamma\Big({5}/{2}\Big)\Gamma\Big({1}/{p}\Big)} - \Big(\frac{\Gamma\Big({3}/{p}\Big)\Gamma\Big({1}/{2}\Big)}{\Gamma\Big({3}/{2}\Big)\Gamma\Big({1}/{p}\Big)}\Big)^{2}\Big)\\
  = \Big(2\frac{\Gamma\Big({3}/{p}\Big)}{\Gamma\Big({1}/{p}\Big)}\Big)^{2d-2}\Big(\frac{4\Gamma\Big({5}/{p}\Big)}{3\Gamma\Big({1}/{p}\Big)} - \Big(2\frac{\Gamma\Big({3}/{p}\Big)}{\Gamma\Big({1}/{p}\Big)}\Big)^{2}\Big).
\end{multline}

Let us now pass to the calculation of $\E h$ and $\pi_1 h(\Gamma_1)$. Recall that
\begin{displaymath}
\pi_1 h(\Gamma_1) = \E_{\Gamma_2,\ldots,\Gamma_{2d-1}} h(\Gamma_1,\ldots,\Gamma_{2d-1}) - \E h.
\end{displaymath}
 We will calculate the first summand and then integrate it to get the other one. To simplify the notation, let us denote $\E' := \E_{\Gamma_2,\ldots,\Gamma_{2d-1}}$

Recalling the definition of $h$, given in \eqref{eq:h-definition} one can see that
\begin{displaymath}
  \E' h(\Gamma_1,\ldots,\Gamma_d) = \frac{1}{(2d-1)!}\binom{2d-2}{d-1}  ( (2d-2)D_1 + D_2),
\end{displaymath}
where
\begin{align*}
D_1 &= \E' \det(\{\Gamma_1,\ldots,\Gamma_d\})^2\det(\{\Gamma_2,\Gamma_{d+1},\ldots,\Gamma_{2d-1}\})^2,\\
 D_2 &= \E' \det(\{\Gamma_1,\ldots,\Gamma_d\})^2\det(\{\Gamma_1,\Gamma_{d+1},\ldots,\Gamma_{2d-1}\})^2.
\end{align*}
Let $Q_i$, $i=0,\ldots,d-1$, be the orthogonal projection onto $\spa(\Gamma_1,\ldots,\Gamma_i)^\perp \subset \R^d$, $Q_1'$ be the orthogonal projection onto $\spa(\Gamma_2)^\perp$ and $Q_i'$, $i=2,\ldots,d$ be the orthogonal projection onto $\spa(\Gamma_2,\Gamma_{d+1},\ldots,\Gamma_{d+i-1})^\perp$. Then, using the interpetation of the determinant as the volume of the paralellopiped we get
\begin{displaymath}
  D_1 = |\Gamma_1|^2 \E' |Q_1 \Gamma_2|^2\cdots |Q_{d-1}\Gamma_d|^2 \cdot |\Gamma_2|^2 |Q_1' \Gamma_{d+1}|^2\cdot |Q_2' \Gamma_{d+2}|^2 \cdots |Q_{d-1}' \Gamma_{2d-1}|^2.
\end{displaymath}
Since conditionally on $\Gamma_2,\ldots,\Gamma_i$, $Q_i \Gamma_{i+1}$ is a standard Gaussian vector on a certain subspace of dimension $d-i$, and an analogous property holds for $Q_i' \Gamma_{d+i}$, we have
\begin{displaymath}
  D_1 = |\Gamma_1|^2 \E' |Q_1 \Gamma_2|^2 |\Gamma_2|^2 (d-2)! (d-1)! = (d-2)! (d-1)! |\Gamma_1|^2 \E' (|Q_1 \Gamma_2|^4  + |Q_1 \Gamma_2|^2 (|\Gamma_2 - Q_1 \Gamma_2|^2).
\end{displaymath}
Using the fact that conditionally on $\Gamma_1$, the random vectors $Q_1 \Gamma_2$ and $\Gamma_2 - Q_1 \Gamma_2$ are independent and have standard Gaussian distributions on spaces of dimension $d-1$ and $1$ respectively, we obtain
\begin{displaymath}
  D_1 = (d-2)! (d-1)! |\Gamma_1|^2 \E (\sum_{i=1}^{d-1} g_i^2)^2 + \E (\sum_{i=1}^{d-1} g_i^2)g_d^2,
\end{displaymath}
where $g_i$'s are i.i.d. $\mathcal{N}(0,1)$ variables. Thus
\begin{displaymath}
  D_1 = (d-2)! (d-1)! |\Gamma_1|^2  ( 3(d-1) + (d-1)(d-2) +d-1) = (d-1)!^2 (d+2)|\Gamma_1|^2.
\end{displaymath}

Similarly (the calculations are simpler),
\begin{displaymath}
  D_2 = |\Gamma_1|^4 (d-1)!^2,
\end{displaymath}
so we get
\begin{align*}
  \E' h(\Gamma_1) &= \frac{1}{(2d-1)!}\binom{2d-2}{d-1}((2d-2)(d-1)!^2 (d+2)|\Gamma_1|^2 + (d-1)!^2|\Gamma_1|^4) \\
  &= \frac{1}{2d-1}((2d-2)(d+2)|\Gamma_1|^2 + |\Gamma_1|^4).
\end{align*}
Integrating $\E' h(\Gamma_1)$ we get
\begin{align}\label{eq:Eh}
\E h = \frac{1}{2d-1}((2d-2)(d+2)d + 3d + d(d-1)) = (d+2)d.
\end{align}
and finally
\begin{align}\label{eq:pi_1h-calculation}
\pi_1 h(\Gamma_1) = \frac{1}{2d-1}\Big((2d-2)(d+2)(|\Gamma_1|^2 -d) + (|\Gamma_1|^4-3d-d(d-1))\Big).
\end{align}

To calculate the variance of $\pi_1 h(\Gamma_1)$ and its covariance with $|\Gamma_1|^2 - d$ it will be convenient to express it in terms of Hermite polynomials of the variables $g_{\ell,1}$. To simplify the notation let us denote from now on $g_\ell = g_{\ell,1}$.
We have
\begin{multline*}
  \pi_1 h(\Gamma_1) = \frac{1}{2d-1}\Big(2d(d+2)\sum_{\ell=1}^d (g_\ell^2 - 1) + \sum_{\ell =1}^d (g_\ell^4 - 6 g_\ell^2 + 3)  + \sum_{1\le i\neq j \le d} (g_i^2-1)(g_j^2-1) \Big)
\end{multline*}
Taking into account that the summands above are uncorrelated and that the variance of the $k$-th Hermite polynomial equals $k!$, we get
\begin{align}\label{eq:variance-calculation}
  \Var(\pi_1 h(\Gamma_1)) = \frac{8d^3(d+2)^2  + 24 d + 4 d(d-1)}{(2d-1)^2}
\end{align}
and
\begin{align}\label{eq:covariance-calculation}
\Cov(\pi_1 h(\Gamma_1),|\Gamma_1|^2 - d) = \frac{4d^2(d+2)}{2d-1}.
\end{align}
Combining \eqref{eq:sigma}--\eqref{eq:covariance-calculation} we finally obtain
\begin{align*}
  & \widetilde{\Sigma}_{p,d}^2 = \frac{9\nu^2}{64 \mu^5 } \Big(8(\E h)^2 d + (2d-1)^2\E(\pi_1 h(\Gamma_1))^2 - 4(2d-1)\E h\cdot \Cov(|\Gamma_1|^2-d,\pi_1 h(\Gamma_1))\Big)\\
= & \frac{9\Big(\frac{4\Gamma(5/p)}{3\Gamma(1/p)} - 4\Big(\frac{\Gamma(3/p)}{\Gamma(1/p)})^{2}\Big)^2}{64 \Big(2\frac{\Gamma(3/p)}{\Gamma(1/p)}\Big)^{d+4}}\times\\
& \times \Big( 8 (d+2)^2d^2\cdot d
+ (2d-1)^2 \frac{8d^3(d+2)^2  + 24 d + 4 d(d-1)}{(2d-1)^2}\\
&- 4 (2d-1)(d+2)d\cdot \frac{4d^2(d+2)}{2d-1}\Big)\\
=& \frac{\Big(\frac{ \Gamma(5/p)}{\Gamma(1/p)} - 3\Big(\frac{\Gamma(3/p)}{\Gamma(1/p)})^{2}\Big)^2}{2^{d+6} \Big(\frac{\Gamma(3/p)}{\Gamma(1/p)}\Big)^{d+4} } \Big(8(d+2)^2d^3 + 8d^3(d+2)^2  + 24 d + 4 d(d-1) - 16(d+2)^2d^3\Big)\\
=& \frac{\Big(\frac{ \Gamma(5/p)}{\Gamma(1/p)} - 3\Big(\frac{\Gamma(3/p)}{\Gamma(1/p)})^{2}\Big)^2}{2^{d+4} \Big(\frac{\Gamma(3/p)}{\Gamma(1/p)}\Big)^{d+4} } d(d+5).
\end{align*}
Recalling that
\begin{displaymath}
n^{3/2}\Big(\E\frac{1}{\sqrt{Y_n}} - \frac{1}{\sqrt{\mu}} - \frac{3\nu \E h}{8\mu^{5/2} n} \Big)
\end{displaymath}
converges to a centered Gaussian variable with variance $\widetilde{\Sigma}_{p,d}^2$ and going back to \eqref{eq:volume-representation-proof-of-CLT} and \eqref{eq:goal} allows to conclude the proof.

\section{Proof of Theorem \ref{thm:d=1-p-arbitrary}}\label{sec:proof-Edgeworth}

The proof of Theorem \ref{thm:d=1-p-arbitrary} will be based on the volume formula of Theorem \ref{thm:2nd-volume-formula} and the Edgeworth expansion given in Theorem \ref{thm:Edgeworth}.

\begin{proof}[Proof of Theorem \ref{thm:d=1-p-arbitrary}]
Let $g_1,g_2,\ldots$ be a sequence of i.i.d. standard Gaussian random variables and let $u_n = \frac{(g_1,\ldots,g_n)}{\sqrt{g_1^2+\cdot+g_n^2}}$. Then $H = H_n = \spa\{u\}^\perp$ is a random Haar distributed subspace of $\R^n$ of codimension one and it is clearly enough to prove the theorem for this choice of $H$.

Let $Y_1,Y_2,\ldots$ be a sequence of independent random variables with density $e^{-\beta_p^p|x|^p}$ where $\beta_p = 2\Gamma(1+1/p)$. According to Theorem \ref{thm:2nd-volume-formula}
\begin{align}\label{eq:reduction-to-density}
 \frac{ \Vol_{n-1}(B_p^n\cap H)}{\Vol_{n-1}(B_p^{n-1})} = f_{g_1,\ldots,g_n}(0),
\end{align}
where for $\alpha_1,\ldots,\alpha_n \in \R$, $f_{\alpha_1,\ldots,\alpha_n}\colon \R\to [0,\infty)$ is the density of the linear combination $\frac{\alpha_1 Y_1+\cdots+\alpha_n Y_n}{\sqrt{\alpha_1^2+\cdots+\alpha_n^2}}$.
To shorten the notation, let us suppress the dependence on the sequence $(g_i)$ and write simply $f_n$ instead of $f$ (this is a slight abuse of notation which however should not lead to misunderstanding). We may assume that the probability space we consider is of the form $(\Omega,\mathcal{F},\p) =
(\Omega_1\times \Omega_2, \mathcal{F}_1\otimes\mathcal{F}_2,\p_1\otimes\p_2)$ and that the variables $g_i$ depend only on the first coordinate while the variables $Y_i$ on the second one. With some abuse of notation we will thus sometime think of $g_i's$ as random variables defined on $\Omega_1$ and $Y_i's$ as random variables defined on $\Omega_2$. Denote also $X_i = g_i Y_i$. We will treat $X_i$'s as random variables on the space $\Omega_2$, for the moment fixing the sequence $g_i$. Let us also denote $G_n = (g_1,\ldots,g_n)$. Thus $f_n$ can be also interpreted as conditional density of $\frac{1}{|G_n|}\sum_{i=1}^n g_i Y_i$ with respect to the $\sigma$-field generated by $g_i$'s.

In what follows we will write $\E_G$ and $\E_Y$ to denote integration with respect to $G$ and $Y$.

The variables $Y_n$ are symmetric so their odd moments and cumulants vanish. Moreover, all moments of $Y_n$ are finite and a simple calculation shows that

\begin{align*}
\E Y_n^2 = \frac{\Gamma(3/p)}{4 \Gamma (1/p)\Gamma(1+1/p)^2}, \E Y_n^4 = \frac{\Gamma(5/p)}{16 \Gamma (1/p)\Gamma(1+1/p)^4}.
\end{align*}
Thus by \eqref{eq:low-cumulants} we have
\begin{align}\label{eq:two-cumulants}
\ka_2(Y_n) = \frac{\Gamma(3/p)}{4 \Gamma (1/p)\Gamma(1+1/p)^2}, \; \ka_4(Y_n) = \frac{1}{16\Gamma(1+1/p)^4}\Big(\frac{\Gamma(5/p)}{\Gamma (1/p)} - 3 \frac{\Gamma(3/p)^2}{\Gamma (1/p)^2}\Big)
\end{align}

We will now apply the Edgeworth expansion given in Theorem \ref{thm:Edgeworth} to $f_n$. Let us verify that the sequence $(X_i)$ satisfies the assumptions of this theorem $\p_G$-almost surely. We will use the notation introduced in the formulation of the theorem.

We have
\begin{displaymath}
B_n = (\E Y_1^2) \sum_{i=1}^n g_i^2
\end{displaymath}
and thus by the Strong  Law of Large Numbers $\liminf_{n\to \infty} \frac{1}{n} B_n = (\E_Y Y_1)^2$, $\p_G$-a.s.
Similarly
\begin{displaymath}
  \frac{1}{n} \sum_{i=1}^n \E_Y |X_i|^K = (\E |Y_1|^K)\frac{1}{n} \sum_{i=1}^n |g_i|^K < \infty
\end{displaymath}
$\p_G$-a.s. for any $K > 0$.
Thus the condition (i) holds with $\p_G$-a.s.

We have $\E_Y |X_i|^K  \ind{|X_i| > n^\tau} \le \E_Y |X_i|^{K+1/\tau}/n = \frac{1}{n} |g_i|^{K+1/\tau} \E |Y_1|^{K+1/\tau}$. Thus
\begin{align*}
  \frac{1}{n} \sum_{i=1}^n \E_Y |X_i|^K\ind{|X_i| > n^\tau} \le (\E |Y_1|^{K+1/\tau}) \frac{1}{n^2} \sum_{i=1}^n |g_i|^{K+1/\tau}.
\end{align*}
Again by the SLLN for every $\tau > 0$, the right hand side converges $\p_G$-a.s. to zero, which shows validity of the condition (ii).

To verify the condition (iii) we will use Lemma \ref{le:Edgeworth-conditions}. Let $h$ be the density of $Y_n$ and note that $h'$ is integrable and so $h$ has finite variation say $V$. Moreover the density of $g_i Y_i$ is of the form $|g_i|^{-1} h(g_i^{-1}\cdot)$ and thus its variation equals $|g_i|^{-1} V \le V$ for $i \in \mathcal{I} = \{j\colon |g_j|> 1\}$. Using one more time the SLLN we see that $\p_G$-a.s.
\begin{displaymath}
  \lim_{n\to \infty} \frac{|\mathcal{I}\cap [n]|}{n} = \p(|g_1|> 1) > 0.
\end{displaymath}
Therefore the condition (iii) of Theorem \ref{thm:Edgeworth} is satisfied $\p_G$-a.s. by Lemma \ref{le:Edgeworth-conditions}.

We have thus proved that the assumptions of Theorem \ref{thm:Edgeworth} hold $\p_G$-a.s. for all $K \ge 3$.  We will however use it for $K = 5$ and only for $x=0$. Thanks to the symmetry of $X_n$ (recall that it implies that odd cumulants vanish) the expansion \eqref{eq:Edgeworth-expansion} will be actually simplified. Recall also that $\ka_m(X_i) = g_i^m \ka_m(Y_i)$. Using the notation of section \ref{sec:Edgeworth} we obtain by \eqref{eq:lambda-definition} that
\begin{displaymath}
  \lambda_{3,n} = \lambda_{5,n} = 0
\end{displaymath}
and
\begin{displaymath}
\lambda_{4,n} = n \frac{\ka_4(Y_1)}{(\E Y_1^2)^2} \frac{\sum_{i=1}^n g_i^4}{(\sum_{i=1}^n g_i^2)^2} .
\end{displaymath}

Combining this with \eqref{eq:terms-of-expansion} and \eqref{eq:first-Hermites} we get
\begin{align*}
  q_{0,n}(x) &= \frac{1}{\sqrt{2\pi}}e^{-x^2/2},\\
  q_{1,n}(x) & = q_{3,n}(x) = 0,\\
  q_{2,n}(x) &= \frac{1}{24\sqrt{2\pi}} e^{-x^2/2}(x^4-6x^2+3) n \frac{\ka_4(Y_1)}{(\E Y_1^2)^2} \frac{\sum_{i=1}^n g_i^4}{(\sum_{i=1}^n g_i^2)^2}.
\end{align*}

Thus $\p_G$-a.s. we have as $n\to \infty$,
\begin{align}\label{eq:density-expansion}
  f_n(0) = \frac{1}{\sqrt{2\pi}(\E Y_1^2)^{1/2}}\Big(1+ \frac{1}{8}\frac{\ka_4(Y_1)}{(\E Y_1^2)^2}\frac{\sum_{i=1}^n g_i^4}{(\sum_{i=1}^n g_i^2)^2}\Big) + o(n^{-3/2}).
\end{align}

Note that the asymptotic behaviour of the random variable $\frac{\sum_{i=1}^n g_i^4}{(\sum_{i=1}^n g_i^2)^2}$ has already been analysed as a special case $d=1$ in the proof of Theorem \ref{thm:CLT}. However, since the elementary and easy analysis is there hidden in the rather involved formalism of general $U$-statistics, let us repeat it here for completeness.

We have
\begin{multline*}
\frac{\sum_{i=1}^n g_i^4}{(\sum_{i=1}^n g_i^2)^2} = \frac{3}{n} \\
+ \frac{1}{n^{3/2}}\Big(
\frac{(n-3)\sum_{i=1}^n (g_i^4 - 6g_i^2 +3)}{n^{3/2}} - \frac{12\sum_{i=1}^n (g_i^2-1) + 3\sum_{1\le i\neq j\le n} (g_i^2-1)(g_j^2-1) + 6n}{n^{3/2}}
\Big)\cdot \frac{n^2}{(\sum_{i=1}^n g_i^2)^2}.
\end{multline*}

The last factor on the right hand side above converges a.s. to 1 by the Law of Large Numbers. The second quotient in parentheses converges in probability to zero as can be easily seen by calculating the variances (note that the summands in the numerator are multiples of Hermite polynomials of different degrees and are thus uncorrelated). The first quotient converges weakly by the CLT to a mean zero Gaussian variable with variance 24.

Using \eqref{eq:density-expansion}, \eqref{eq:reduction-to-density} and \eqref{eq:two-cumulants} we thus obtain that
\begin{multline*}
  n^{3/2}\Big(\frac{\Vol_{n-1}(B_p^n\cap H)}{\Vol_{n-1}(B_p^{n-1})} - a_{p,1} - \frac{1}{n}b_{p,1}\Big)\\
  = n^{3/2}\Big(\frac{\Vol_{n-1}(B_p^n\cap H)}{\Vol_{n-1}(B_p^{n-1})} - \frac{\sqrt{2}\Gamma(1+1/p)\Gamma(1/p)^{1/2}}{\sqrt{\pi} \Gamma(3/p)^{1/2}}\Big(1 + \frac{3}{8n}\Big(\frac{\Gamma(1/p)}{\Gamma(3/p)}\Big)^2\Big(\frac{\Gamma(5/p)}{\Gamma (1/p)} - 3 \frac{\Gamma(3/p)^2}{\Gamma (1/p)^2}\Big)\Big)\Big)\\
  = n^{3/2}\Big(f_n(0) - \frac{1}{\sqrt{2\pi}(\E Y_1^2)^{1/2}}\Big(1+\frac{3}{8n}\frac{\ka_4(Y_1)}{(\E Y_1^2)^2}\Big)\Big) + o_\p(1) \\
  = \frac{\ka_4(Y_1)}{8 \sqrt{2\pi}(\E Y_1^2)^{5/2}}n^{3/2}\Big(\frac{\sum_{i=1}^n g_i^4}{(\sum_{i=1}^n g_i^2)^2} - \frac{3}{n}\Big) + o_\p(1)
  \end{multline*}
converges in distribution to a mean zero Gaussian variable with variance
\begin{align*}
&\frac{ \ka_4(Y_1)^2}{128 \pi (\E Y_1^2)^5}\cdot 24 = \frac{24}{128\pi}\Big(\frac{4 \Gamma(1/p)\Gamma(1+1/p)^2}{\Gamma(3/p)}\Big)^5 \frac{1}{256\Gamma(1+1/p)^8}\Big(\frac{\Gamma(5/p)}{\Gamma (1/p)} - 3 \frac{\Gamma(3/p)^2}{\Gamma (1/p)^2}\Big)^2\\
& = \frac{3}{4\pi}\Big(\frac{\Gamma(1/p)}{\Gamma(3/p)}\Big)^5\Gamma(1+1/p)^2 \Big(\frac{\Gamma(5/p)}{\Gamma (1/p)} - 3 \frac{\Gamma(3/p)^2}{\Gamma (1/p)^2}\Big)^2 = \Sigma_{p,1}^2,
\end{align*}
which ends the proof of Theorem \ref{thm:d=1-p-arbitrary}.
\end{proof}

\begin{remark}
Note that the term $o(n^{-3/2})$ in \eqref{eq:density-expansion} in general depends on the values of the sequence $(g_i)$ and is not given explicitly. For this reason using Theorem \ref{thm:Edgeworth} as a black box will not lead to convergence in Wasserstein distance, contrary to the proof of Theorem \ref{thm:CLT}.
\end{remark}

\begin{remark}\label{rem:Edgeworth-in-high-d}

In principle the method of proof of Theorem \ref{thm:d=1-p-arbitrary} should work for general $d$. What one would need is a suitable version of multidimensional Edgeworth expansion for the density of sums of independent but non-identically distributed random vectors (actually only for the value of density at zero). The majority of the literature on Edgeworth expansions in higher dimensions focuses on sums of i.i.d. variables however there are several results concerning the non i.i.d. setting (see, e.g., the monograph \cite{MR0436272}). One of the main difficulties in applying such theorems as black boxes for $d>1$ is that due to the Gram-Schmidt orthogonalization performed for each $n$ in order to relate a random basis of $H^\perp$ to Gaussian vectors, one actually would need Edgeworth expansions not for infinite sequences of random variables but for triangular arrays. We are not aware of a result of this type for densities which would be easily applicable in our setting. It is quite likely that such a result can be obtained by  an appropriate adaptation of the proofs of known theorems for sequences of random vectors. Such an extension is however beyond the scope of this article.
\end{remark}

\begin{remark}\label{rem:generic-CLT-etc}
Let us note that Edgeworth expansions for randomly weighted sums of independent real-valued random variables have been recently investigated in \cite{MR4175744}. The results obtained therein concern rather approximations of cumulative distribution functions than densities and the average error in the Edgeworth expansion up to order four. While not directly applicable to our setting, they share some similarities, in particular they show that the average approximation error for the Edgeworth expansion with deterministic terms (i.e., independent of the direction) in a typical situation is of the order $n^{-3/2}$ which agrees with the normalization in our limit theorems. The results obtained \cite{MR4175744} complement an earlier work \cite{MR3136463} in which Berry-Esseen bounds for randomly weighted sums were investigated. Related Berry-Esseen bounds for random vectors in higher dimension were also recently investigated in \cite{MR4193896}. Let us mention that this direction of research has actually been initiated already by V.N. Sudakov in the late 1970s \cite{MR517198}.
\end{remark}

\section{Proof of Theorem \ref{thm:cube}}\label{sec:cube}

The proof is similar and simpler than the proof of Theorem \ref{thm:d=1-p-arbitrary} so we will just indicate the necessary modifications.

Considering again an i.i.d. sequence $g_1,g_2\ldots$ of standard Gaussian variables and letting $u_n = \frac{(g_1,\ldots,g_n)}{\sqrt{g_1^2+\ldots+g_n^2}}$, it is now straightforward to see that
\begin{displaymath}
  2^{-n}\Vol_{n-1}(B_\infty^n\cap (xu+H_n)) = f_n(x),
\end{displaymath}
where $f_n$ is the conditional density of $\frac{\sum_{i=1}^n g_i Y_i}{\sqrt{g_1^2+\ldots+g_n^2}}$, where $Y_1,Y_2,\ldots$ is a sequence of i.i.d. random variables, uniform on $[-1,1]$ and independent of the sequence $(g_i)$ (we condition on $(g_i)$).

Thus repeating the steps related to \eqref{eq:density-expansion} we obtain that
\begin{multline*}
  2^{-n}\Vol_{n-1}(B_\infty^n\cap (xu+H_n)) \\
  = \frac{1}{\sqrt{2\pi} (\E Y_1^2)^{1/2}}\exp\Big(-\frac{x^2}{2\E Y_1^2}\Big)\Big(1
  + \frac{1}{24} \Big(\frac{x^4}{(\E Y_1^2)^2} -6\frac{x^2}{\E Y_1^2}+3\Big) \frac{\ka_4(Y_1)}{(\E Y_1^2)^2} \frac{\sum_{i=1}^n g_i^4}{(\sum_{i=1}^n g_i^2)^2}\Big) + o_\p(n^{-3/2}).
\end{multline*}

By the analysis from the proof of Theorem \ref{thm:d=1-p-arbitrary} we thus see that

\begin{multline*}
  2^{-n}\Vol_{n-1}(B_\infty^n\cap (xu+H_n))\\
  = \frac{1}{\sqrt{2\pi} (\E Y_1^2)^{1/2}}\exp\Big(-\frac{x^2}{2\E Y_1^2}\Big)\Big(1
  + \frac{1}{24} \Big(\frac{x^4}{(\E Y_1^2)^2} -6\frac{x^2}{\E Y_1^2}+3\Big) \frac{\ka_4(Y_1)}{(\E Y_1^2)^2}\Big(\frac{3}{n} + U_n\Big)\Big),
\end{multline*}
for a sequence $U_n$ pf random variables such that $n^{3/2}U_n$ converges weakly to a Gaussian variable with mean zero and variance 24.
To finish the proof it is now enough to rearrange the terms and substitute the values of variance and fourth cumulant of the uniform distribution.

\section{Proof of Corollary \ref{cor:intersection}}\label{sec:intersection}
The following argument is a simple application of a delta type method in combination with Theorem \ref{thm:d=1-p-arbitrary}.

\begin{proof}[Proof of Corollary \ref{cor:intersection}] By Theorem \ref{thm:d=1-p-arbitrary} we have
\begin{displaymath}
\frac{\rho_{\mathcal{I}B_p^n}(\eta)}{\Vol_{n-1}(B_p^n)} = a_{p,1} + R_n,
\end{displaymath}
where $R_n$ is a random variable such that $n^{3/2} (R_n - b_{p,1}/n)$ converges in distribution to a centered Gaussian variable with variance $\Sigma_{p,1}^2$. In particular $n R_n$ converges in probability to $b_{p,1}$.

Thus
\begin{align*}
  n^{3/2}\Big(\Vol_{n-1}(B_p^n) \|\eta\|_{\mathcal{I}B_p^n} - \frac{1}{a_{p,1}} + \frac{b_{p,1}}{n a_{p,1}^2}\Big) & = n^{3/2}\Big(\frac{1}{a_{p,1} + R_n}  -\frac{1}{a_{p,1}} + \frac{b_{p,1}}{n a_{p,1}^2}\Big)\\
& = \frac{n^{3/2}R_n^2}{a_{p,1}^2(a_{p,1}+R_n)} - \frac{n^{3/2}}{a_{p,1}^2}\Big(R_n - \frac{b_{p,1}}{n}\Big).
\end{align*}

To finish the proof it suffices to note that the first summand on the right hand side above converges in probability to zero, the second one to $\mathcal{N}(0,\Sigma_{p,1}^2/a_{p,1}^4)$.
\end{proof}

\appendix

\section{Proof of Theorem \ref{thm:N-T}}\label{app:proof-NT}

The argument we present below mimics the proof in \cite{MR4055953} provided there for $p=1$, therefore we will only present a sketch. Our main objective is to derive correct constants on the right hand side of \eqref{eq:N-T}.

\begin{proof}[Proof of Theorem \ref{thm:N-T}]
Let $H(\epsilon) = \{c\in \R^n\colon |\langle x,u_j\rangle| \le \varepsilon/2. \; j=1,\ldots,d\}$. By a well known formula for volumes of sections (see \cite{MR4055953} for a discussion and references)
\begin{align}\label{eq:last-one}
\Gamma(1+ (n-d)/p) \Vol_{n-d} (H\cap B_p^n) &= \lim_{\varepsilon \to 0}\frac{1}{\varepsilon^d}\int_{H(\varepsilon)}e^{-\sum_{j=1}^n |x_i|^p}dx \nonumber \\
& = (2\Gamma(1+1/p))^n\lim_{\varepsilon \to 0} \frac{1}{\varepsilon^d}\p\Big(\Big\|\sum_{i=1}^n X_i v_i\Big\|_\infty\le \varepsilon/2\Big),
\end{align}
where $X_1,\ldots,X_n$ are i.i.d. random variables with density $\frac{1}{2\Gamma(1+1/p)}\exp(-|x|^p)$.
By \cite[Lemma 23]{MR3846841}, $X_i$'s have the same distribution as $(2W_i)^{-1/2}g_i$, where $G= (g_1,\ldots,g_n)$ is a sequence of independent standard Gaussian variables independent of the sequence $(W_i)$.
Thus
\begin{displaymath}
  \p\Big(\Big\|\sum_{i=1}^n X_i v_i\Big\|_\infty\Big) = \p\Big(\Big\|\sum_{i=1}^n g_i \tilde{v}_i\Big\|_\infty\le \varepsilon/2\Big) = \p(G \in (\varepsilon/2)K),
\end{displaymath}
where $\tilde{v}_i = (2W_i)^{-1/2}v_i$, $A$ is the $d\times n$ matrix with columns $\tilde{v}_i$ and $K = A^{-1}B_\infty^d$. Let $V := (\Ker A)^\perp$ and $\tilde{G}$ be the orthogonal projection of $G$ onto $V$. Then $G \in K$ if and only if $\tilde{G} \in K\cap V$. Conditionally on $W_i$'s, $\tilde{G}$ is a standard Gaussian vector on the $d$-dimensional subspace of $V$. It has a continuous and bounded density with respect to the Lebesgue measure on $V$, whose value at zero equals $(2\pi )^{-d/2}$. As a consequence, by the Fubini and Lebesgue dominated convergence theorems,
\begin{align*}
  \lim_{\varepsilon \to 0} \frac{1}{\varepsilon^d }\p\Big(\Big\|\sum_{i=1}^n g_i \tilde{v}_i\Big\|_\infty \le \varepsilon/2\Big)
  &= \E_W \lim_{\varepsilon \to 0} \frac{1}{\varepsilon^d } \p_G(\tilde{G} \in (\varepsilon/2)( K\cap V)) \\
  &= 2^{-d} (2\pi)^{-d/2} \E_W \Vol_d (K\cap V) = 2^{-3d/2} \pi^{-d/2}\E_W \Vol_d (K\cap V)
\end{align*}

It remains to observe that $A$ is a linear isomorphism between $V$ and $\R^d$ and it maps $K\cap V$ onto $B_\infty^d$, which is of volume $2^d$. Thus
\begin{displaymath}
\Vol_d (K\cap V) = (\det (A A^T))^{-1/2} 2^d = 2^{3d/2} \Big(\det\Big(\sum_{j=1}^n \frac{1}{W_j}v_j v_j^T \Big)\Big)^{-1/2},
\end{displaymath}
which combined with the previous formula and \eqref{eq:last-one} gives
\begin{displaymath}
  \Vol_{n-d}(B_p^n \cap H) = \frac{2^n}{\pi^{d/2}}\frac{\Gamma(1+1/p)^n}{\Gamma(1+(n-d)/p)} \E \Big(\det\Big(\sum_{j=1}^n \frac{1}{W_j}v_j v_j^T \Big)\Big)^{-1/2}.
\end{displaymath}
\end{proof}

\medskip

\noindent {\bf Acknowledgements} R.A. would like to thank Katya Blau for support and inspiring interactions.

\bibliographystyle{amsplain}
\bibliography{CLT-sections}
\end{document}